\newtheorem{theorem}{Theorem}[section]
\newtheorem{remark}{Remark}[section]
\newtheorem{corollary}{Corollary}[section]
\titleformat{\section}{\centering\large\bfseries}{\S\arabic{section}}{1em}{}
\begin{document}

\setlength\abovedisplayskip{2pt}
\setlength\abovedisplayshortskip{0pt}
\setlength\belowdisplayskip{2pt}
\setlength\belowdisplayshortskip{0pt}

\title{\bf \Large Revised BDS Test\author{Wenya Luo, Zhidong Bai, Jiang Hu, Chen Wang }\date{}} \maketitle
 \footnote{Received: .}
 \footnote{MR Subject Classification: Primary 62G10; Secondary 62E20.}
 \footnote{Keywords: independence, correlation integral, over-rejection, BDS test.}
 \footnote{Digital Object Identifier(DOI): 10.1007/s11766-016-****-*.}
 \footnote{Supported by the National Natural Science Foundation of China\,(********).}
\begin{center}
\begin{minipage}{135mm}

{\bf \small Abstract}.\hskip 2mm {\small
In this paper, we focus on the BDS test, which is  a nonparametric test of independence. Specifically, the null hypothesis $H_{0}$ of it is that $\{u_{t}\}$ is i.i.d. (independent and identically distributed), where $\{u_{t}\}$ is a random sequence. The BDS test is widely used in economics and finance, but it has a weakness that cannot be ignored: over-rejecting $H_{0}$ even if the length $T$ of $\{u_{t}\}$ is as large as $(100,2000)$. To improve the over-rejection problem of BDS test, considering that the correlation integral is the foundation of BDS test, we not only accurately describe the expectation of the correlation integral under $H_{0}$, but also calculate all terms of the asymptotic variance of the correlation integral whose order is $O(T^{-1})$ and $O(T^{-2})$, which is essential to improve the finite sample performance of BDS test. Based on this, we propose a revised BDS (RBDS) test and prove its asymptotic normality under $H_{0}$. The RBDS test not only inherits all the advantages of the BDS test, but also effectively corrects the over-rejection problem of the BDS test, which can be fully confirmed by the simulation results we presented. Moreover, based on the simulation results, we find that similar to BDS test, RBDS test would also be affected by the parameter estimations of the ARCH-type model, resulting in size distortion, but this phenomenon can be alleviated by the logarithmic transformation preprocessing of the estimate residuals of the model. Besides, through some actual datasets that have been demonstrated to fit well with ARCH-type models, we also compared the performance of BDS test and RBDS test in evaluating the goodness-of-fit of the model in empirical problem, and the results reflect that, under the same condition, the performance of the RBDS test is more encouraging.}

\end{minipage}
\end{center}

\thispagestyle{fancyplain} \fancyhead{}
\fancyhead[L]{\textit{Appl. Math. J. Chinese Univ.}\\
2017, 32(*): ***-***} \fancyfoot{} \vskip 10mm

\section{Introduction}
Independence has always been highly concerned in econometrics, finance, time series analysis and statistics, due to the fact that many problems boil down to testing independence hypothesis. Thus, many independence tests are constructed, such as Skaug and Tj{\o}stheim (1993), Delgado (1996), Hong (1998), Matilla-Garc{\' \i} and Mar{\' \i}n (2008) and so on. Among them, there is a widespread nonparametric independent test --- the BDS test, which was first proposed by Brock et al. (1987) and its theory was elaborated in Broock et al. (1996). The BDS test has some desirable properties. First, same as other nonparametric tests, the BDS test can be applied to the random sequence $\{u_{t}\}$ without knowing more information about it. Second, the BDS test still performs well in the absence of higher moments of $\{u_{t}\}$. De Lima (1997) investigated the robustness of some tests, including the BDS test, when they suffer from moment condition failure and found that all the tests considered in his study require $\{u_{t}\}$ to have at least finite fourth moment except the BDS test. This property makes the BDS test popular in economics and finance because of the fact that the fourth moment of the time series in economics and finance is generally not finite (Jansen and De Vires (1991) and Loretan and Phillips (1994)). Third, there's a fast algorithm of the BDS test given by LeBaron (1997), which make it easy to implement by simply invoking the function in R software. The popularity of the BDS test is also inseparable from this. Moreover, Belaire-Franch and Contreras (2002) discussed and compared the algorithm of the BDS test in available softwares.
All the above advantages make the BDS test widely used in economics and finance. Specifically, its applications can be divided into two categories.
One is to detect whether there is non-linear structure in the data. Generally speaking, the BDS test is often used to conduct preliminary research on the data, which is helpful for model identification, for example, Hsieh (1991) used the BDS test to capture the possible nonlinear structure of the stock market; Madhavan (2013) used the BDS test to detect the nonlinearity in US and European Investment Grade Credit Default Swap Indices; Akintunde et al. (2015) detected the nonlinearity of the commercial bank savings in Nigeria using the BDS test. 
Another important application is that the BDS test can be used as a model diagnostic tool. Specifically, considering a model below:
\begin{equation}\label{model1.1}
	y_{t}=f(x_{t},b,\delta_{t}),\ \delta_{t}\stackrel{\mathrm{i.i.d.}}{\sim}N(0,1),
\end{equation}
where $\{y_{t}\}$ and $\{x_{t}\}$ are two observable time series, $b$ is the unknown parameter to be estimated consistently, and $\{\delta_{t}\}$ is a sequence of i.i.d. variables and is independent of $\{x_{t}\}$. If the fitted model \eqref{model1.1} is correctly specified, the residuals $\hat{\delta}_{t}=g(y_{t},x_{t},\hat{b})$ should pass the BDS test, otherwise it indicates that the fitted model is misspecified. There are an  impressive body of literature using the BDS test as a tool of model selection, such as, Chen and Kuan (2002), Brock and Durlauf (2007), Racine and Maasoumi (2007). Besides, some researchers studied the effects of the residuals from different fitted models on the performance of the BDS test, for example, Broocks and Heravi (1999), Lai (2000), Caporale et al. (2005), Fernandes and Preumont (2012). 

Considering that the BDS test has so many attractive advantages, many researchers compared independence test they proposed with the BDS test, for example,  Lee et al. (1993) compared the neural network methods with some alternative tests containing the BDS test; Pinkse (1998)  compared their nonparametric test for serial independence with the BDS test; Diks and Panchenko (2007) proposed a new test using kernel-based quadratic forms and compared it with the BDS test; C{\'{a}}novas et al. (2013) obtained an independence test based on permutation and compared it with the BDS test. Similar researches includes: Hui et al. (2017), Hjellvik and Tj{\o}stheim (1996), Granger et al. (2004) etc.
In addition, inspired by the BDS test, some researchers developed new test, for example: Beak and Brock (1992) put forward the vector version of the BDS test to meet the need of multivariate time series model; Genest et al. (2007) propose a ranked-based extension of the BDS test.

Along with its strengths, the BDS test was found to have some drawbacks. First, the BDS test involves several parameters that need to be set manually, specifically, embedding dimension $m$, dimensional distance $\epsilon$, and delay times $\varsigma$. Among them, $\varsigma$ is always set as 1 and more literatures on $\varsigma$ can refer to Matilla-Garc{\'\i}a et al. (2004a), Matilla-Garc{\'\i}a et al. (2004b), Matilla-Garc{\'\i}a et al. (2005) and Matilla-Garc{\'\i}a and Mar{\'\i}n (2010). Besides, Kanzler (1999) discovere that the behavior of the BDS test is sensitive to the choice of the embedding dimension $m$, dimension distance $\epsilon$, and the length $T$ of the random sequence $\{u_{t}\}$. Broock et al. (1996) suggests that $m$ should be chosen as any integer in $[2,5]$ and $\epsilon$ should be set as $0.5\sqrt{\mathbf{Var}(u_{t})}$ when the length $T$ of $\{u_{t}\}$ is 200 or larger. To weaken the influence of $\epsilon$, Ko{\v{c}}enda (2001)  study the ratio of $\log{C_{m,T}}$ to $\log{\epsilon}$ and Ko{\v{c}}enda (2005)  calculate the results of this ratio when $\epsilon$ takes different values and give the optimal choice. 

Another drawback of the BDS test is over-rejection problem, which is the concern of our research. Studies show that although $\epsilon$ and $m$ are well set following the advice given by Broock et al. (1996), the BDS test has a shortcoming of  over-rejecting $H_{0}$ even when the length $T$ of $\{u_{t}\}$ is as large as several hundred or even two thousand. This phenomenon is gradually weakened when $T>2000$, until $T\geq{3000}$, become negligible. Therefore, it's necessary to improve this drawback of the BDS test, which is the target of our research. Since the BDS test originates in a chaos theory and is based on the correlation integral $C_{m,T}(\epsilon)$ in Procacia et al. (1983), to explore the reason for this problem, we studied the theory given by Broock et al. (1996) and found that the mistake of treating $C_{m,T}(\epsilon)$ as the U-statistic is the root reason of this problem, since $C_{m,T}(\epsilon)$ doesn't satisfy the definition of U-statistic, which will be explained in detail in section 2.

In this paper, without basing on the theory of U-statistic, we precisely depict the expectation of the correlation integral $C_{m,T}(\epsilon)$ under  $H_{0}$, and calculate all terms with order $O(T^{-1})$ and $O(T^{-2})$ in the asymptotic variance of $C_{m,T}(\epsilon)$. So the asymptotic variance given here is more accurate than that given by Broock et al. (1996), and our study shows that terms with order $O(T^{-2})$ in the asymptotic variance can not be ignored to improve the finite sample performance of the statistic. Based on the new asymptotic theory of $C_{m,T}(\epsilon)$, we present a revised BDS (RBDS) test and proof its asymptotic normality under $H_{0}$. The RBDS test is still a nonparametric test for independence based on the correlation integral, which inherits all the advantages of the BDS test, more importantly, it effectively improve the over-rejection problem of the BDS test. In addition, we design some simulation experiments to compare the finite sample behavior of the RBDS test and the BDS test, and the results confirm that the RBDS test gets rid of the over-rejection problem even when $T$ is small. In addition, several experiments are designed to compare the different performances of BDS test and RBDS test when they are subjected to parameter estimation of ARCH-type model. The results show that, similar to BDS test, RBDS test also has size distortion phenomenon due to the influence of model parameter estimations, but the logarithmic transformation pre-processing of the estimate residual sequence can effectively improve the distortion problem. Besides, we apply RBDS test and BDS test to some real datasets that have been demonstrated to fit well with ARCH-type model, to compare their performance as model diagnostic tools in practice. And the results reflect, under the same condition, the performance of RBDS test is more encouraging.

The rest of this paper are organized as follows: In section 2, we introduce the notations that will be encountered in this paper, review the theory of the BDS test briefly and discuss why the correlation integral $C_{m,T}(\epsilon)$ is not a U-statistic; In section 3, we introduce the exact expectation of the correlation integral $C_{m,T}(\epsilon)$ and the modified asymptotic limit theory of $C_{m,T}(\epsilon)$. The RBDS test and the CLT (central limit theorem) of it are presented in section 4. The results of comparing the empirical sizes and empirical powers of the two tests and analyzing the performances of the two tests affected by the model estimation parameters are presented in section 5. The differences between BDS test and RBDS test in real datasets are displayed in section 6. Section 7 gives some conclusions. Some proofs are relegated to the appendixes.

\section{The BDS test}
\subsection{Notation}
Throughout the paper, we use $\{u_{t}\}$ to denote a random sequence of length $T$. $m$ stands for an integer belonging to $\mathbb{Z}^+$ and $\epsilon$ denotes a constant satisfying $\epsilon>0$. For fixed $m$, we use $\{Y_{l}^m\}$ to denote an m-dimensional random vector sequence of length $T_{m}=T-m+1$, where $
Y_{l}^m=\left(u_{l},u_{l+1},\cdots,u_{l+m-1}\right)^{'}$. Besides, notation
$\|\cdot\|$ denotes the maximum norm for a vector and $\lfloor{\cdot}\rfloor$ denotes the round down function. In addition, we use $I_{\epsilon}(\cdot)$ to represent an indicator function, defined as follows:
\begin{equation}\label{indicator}
	I_{\epsilon}(x)=\left\{
	\begin{array}{rcl}
		1 & & \ x<{\epsilon},\\
		0 & & \   \text{else.}
	\end{array} \right.\\
\end{equation}
Weak convergence is denoted by $\stackrel{D}{\longrightarrow}$. And the expectation and variance are denoted by $\mathbf{E}(\cdot)$ and  $\mathbf{Var}(\cdot)$ respectively. Moreover, we introduce the following two symbols to represent the two quantities for a given $x$:
\begin{equation}\label{C_M}
C_{T,x}=\frac{(T-1-x)!}{T!}, M_x=\frac{(T_m-m+1)!}{(T_m-m+1-x)!}.
\end{equation}
In addition, we use
$\omega_l^r$, $\eta_l^{l-1}$ and $\xi_l^{\kappa}$ to represent the probability defined in \eqref{omegalr}, \eqref{eta} and \eqref{xi} respectively and use $\widehat{\omega}_l^r$, $\widehat{\eta}_l^{l-1}$ and $\widehat{\xi}_l^{\kappa}$ to represent their consistent estimations based on U-statistic, respectively. For instance, when $r=0$, $\omega_{l}^{0}=P(|u_{t}-u_{t+1}|<\epsilon,|u_{t+1}-u_{t+2}|<\epsilon,..., |u_{t+l-1}-u_{t+l}|<\epsilon)$, its consistent estimation is as follows:
\begin{equation}\label{estomegal0}
	\widehat{\omega}^0_{l}=C_{T,l}\sum \limits_{t_{0},t_{1},\cdots,t_{l} \atop \mbox{\tiny{distrinct}}} {\prod_{\rho=0}^{l-1}I_{\epsilon}(|u_{t_{\rho}}-u_{t_{\rho+1}}|)},\quad C_{T,l}=\frac{(T-1-l)!}{T!}.
\end{equation}
To better understand, we associate $\omega_l^r$, $\eta_l^{l-1}$ and $\xi_l^{\kappa}$ with graphics shown in Figure \ref{figureomegalr}, Figure \ref{figureeta} and Figure \ref{figurexi} respectively.
\begin{equation}\label{omegalr}
	\begin{aligned}
		\omega_{l}^{r}&=
		P\left(\left\{\mbox{$\begin{array}{ll}
				|u_{t}-u_{t+1}|<{\epsilon},&\\
				|u_{t+1}-u_{t+2}|<{\epsilon}, &\\
				\qquad \cdots & \\
				|u_{t+l-r-1}-u_{t+l-r}|<{\epsilon}, & \\
				|u_{t+l-r}-u_{t+l-r+1}|<{\epsilon}, & |u_{t+l-r+1}-u_{\alpha}|<{\epsilon}, \\
				|u_{t+l-r+1}-u_{t+l-r+2}|<{\epsilon}, & |u_{t+l-r+2}-u_{\alpha+1}|<{\epsilon},\\
				\qquad \cdots & \\
				|u_{t+l-1}-u_{t+l}|<{\epsilon}, & |u_{t+l}-u_{\alpha+r-1}|<{\epsilon},  \\
			\end{array}$}\right\}\right)\\
		&=P\left(\left\{\mbox{$\begin{array}{ll}
				|u_{\alpha}-u_{t}|<{\epsilon}, &|u_{t}-u_{t+1}|<{\epsilon}, \\
				|u_{\alpha+1}-u_{t+1}|<{\epsilon}, &|u_{t+1}-u_{t+2}|<{\epsilon}, \\
				\qquad \cdots & \\
				|u_{\alpha+r-2}-u_{t+r-2}|<{\epsilon}, & |u_{t+r-2}-u_{t+r-1}|<{\epsilon},  \\
				|u_{\alpha+r-1}-u_{t+r-1}|<{\epsilon}, & |u_{t+r-1}-u_{t+r}|<{\epsilon},  \\
				|u_{t+r}-u_{t+r+1}|<{\epsilon}, & \\
				\qquad \cdots & \\
				|u_{t+l-1}-u_{t+l}|<{\epsilon}, & \\
			\end{array}$}\right\}\right),
	\end{aligned}
\end{equation}

where $\left\{(\alpha,\alpha_{1},\cdots,\alpha+r-1) \in{{\mathbb{D}}_{1}^{c}}\right\},\ {\mathbb{D}_{1}}=\{t,t+1,\cdots,t+l\}.$ 

\begin{equation}\label{eta}
	\begin{aligned}
		\eta_{l}^{l-1}=P\left(\left\{\mbox{$\begin{array}{ll}
				|u_{t}-u_{t+1}|<{\epsilon}, & \\
				|u_{t+1}-u_{t+2}|<{\epsilon}, &|u_{t+1}-u_{\beta}|<{\epsilon}, \\
				|u_{t+2}-u_{t+3}|<{\epsilon}, &|u_{t+2}-u_{\beta+1}|<{\epsilon}, \\
				\qquad \cdots & \\
				|u_{t+l-1}-u_{t+l}|<{\epsilon}, & |u_{t+l-1}-u_{\beta+l-2}|<{\epsilon}, \\
				|u_{t+l}-u_{t+l+1}|<{\epsilon} &
			\end{array}$}\right\}\right),
	\end{aligned}
\end{equation}

where $\left\{(\beta,\beta+1,\cdots,\beta+l-2)\in{\mathbb{D}_{2}^c}\right\}$, $\mathbb{D}_{2}=\{t,t+1,\cdots,t+l+1\}$. 

\begin{equation}\label{xi}
	\begin{aligned}
		\xi_{l}^{\kappa}&=
		P\left(\left\{\mbox{$\begin{array}{ll}
				|u_{t}-u_{t+1}|<{\epsilon},& \\
				|u_{t+1}-u_{t+2}|<{\epsilon}, & \\
				\qquad \cdots & \\
				|u_{t+\kappa-1}-u_{t+\kappa}|<{\epsilon}, & \\
				|u_{t+\kappa}-u_{t+\kappa+1}|<{\epsilon}, & |u_{t+\kappa}-u_{\gamma}|<{\epsilon}, \\
				|u_{t+\kappa+1}-u_{t+\kappa+2}|<{\epsilon}, & \\
				\qquad  \cdots & \\
				|u_{t+l-1}-u_{t+l}|<{\epsilon},  &  \\
			\end{array}$}\right\}\right),
	\end{aligned}
\end{equation}

where $\gamma\in{\mathbb{D}_{1}^c}$, $\mathbb{D}_{1}=\{t,t+1,\cdots,t+l\}$. 

\begin{figure}[H]
	\centering
	\includegraphics[width=0.7\textwidth]{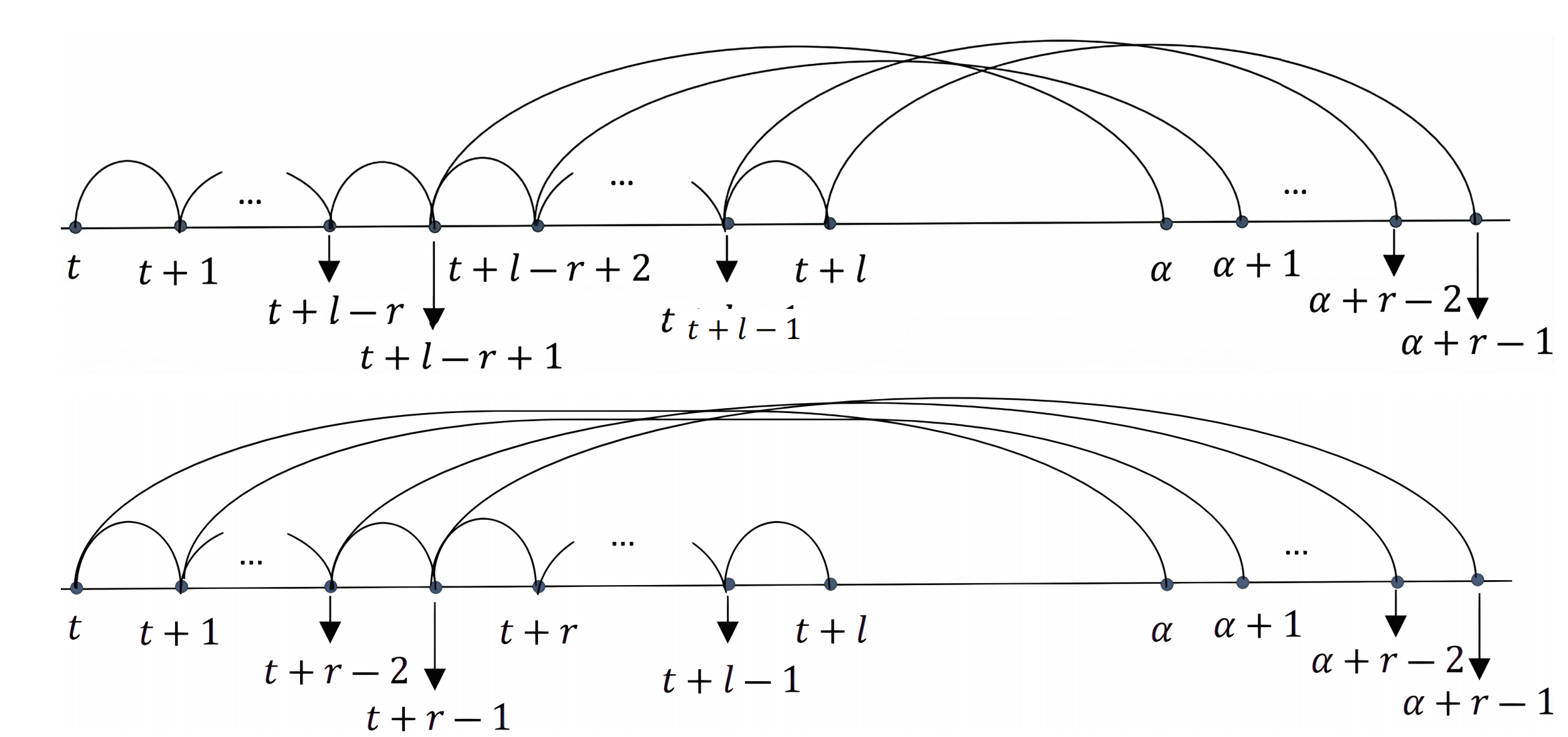}
	\caption{Graphics for $\omega_{l}^{r}$.}
	\label{figureomegalr}\vspace{-0.75cm}
\end{figure}
\begin{figure}[H]
	\centering
	\includegraphics[width=0.7\textwidth]{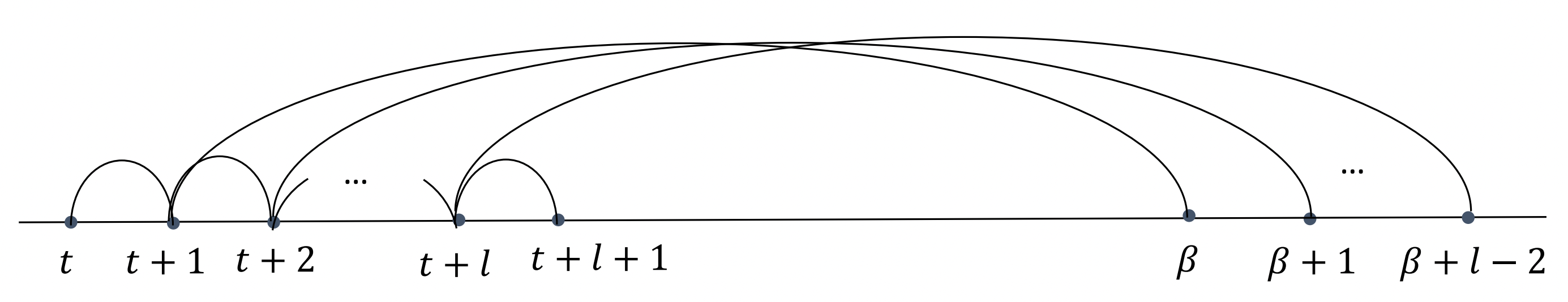}
	\caption{Graph for $\eta_{l}^{l-1}$.}
	\label{figureeta}\vspace{-0.75cm}
\end{figure}
\begin{figure}[H]
	\centering
	\includegraphics[width=0.7\textwidth]{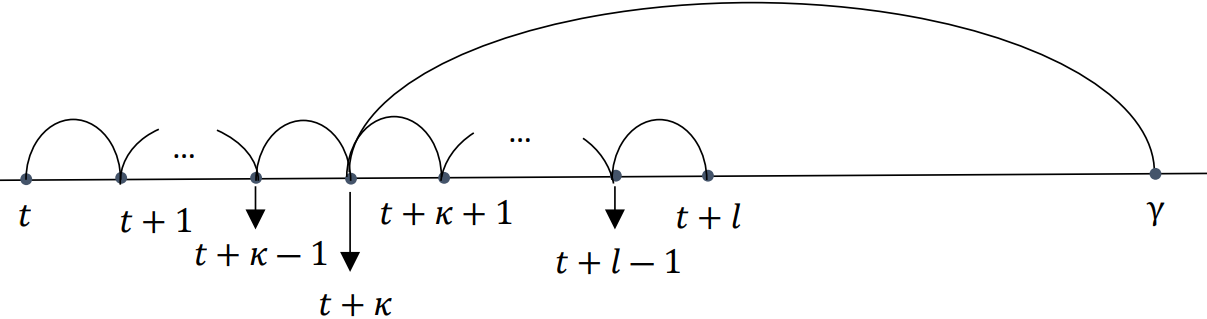}
	\caption{Graph for $\xi_{l}^{\kappa}$.}
	\label{figurexi}
\end{figure}

\subsection{The BDS test}

The BDS test proposed by Brock et al. (1987) is a nonparametric test used to detect $H_{0}:\{u_{t}\}$ is i.i.d.. Its theory is given in detail by Broock et al. (1996), specifically,
\begin{equation}\label{BDStest}
	W_{m,T}(\epsilon) = \sqrt{T}\frac{\big(C_{m,T}(\epsilon)-\left(\widehat{\omega}_{1}^0\right)^m\big)} {V_{m,T}(\epsilon)}\stackrel{D}{\underset{under H_{0}}{\longrightarrow}}{N(0,1)},
\end{equation}
where $\epsilon$ and $m$ are two preset parameters under $\epsilon>0$ and $m\in{\mathbb{Z}^+}$, $m$ is called the embedding dimension,
\begin{equation}\label{CIdef}
	C_{m,T}(\epsilon)=\frac{1}{N} \sum\limits_{1\leq{t}<{s}\leq{T_m}}{I_{\epsilon}(\|{Y_{t}^{m}-Y_{s}^{m}}\|)},\ N=\binom{T_m}{2},T_{m}=T-m+1,
\end{equation}
is the correlation integral of $\{u_{t}\}$ introduced by Procacia et al. (1983), 
\begin{equation}\label{hatomega10}
	\widehat \omega_1^0=\frac{1}{T(T-1)}\sum_{t\ne s}^TI_{\epsilon}(|u_{t}-u_{s}|),
\end{equation}
\begin{equation}\label{hatomega20}
	\widehat\omega_2^0=\frac{1}{T(T-1)(T-2)}\sum\limits_{t,s,r \mbox{\tiny{distinct}}}^T{I_{\epsilon}(|u_{t}-u_{s}|) I_{\epsilon}(|u_{s}-u_{r}|)},
\end{equation}
\begin{equation}\label{BDSVar}
	\begin{aligned}
		&V_{m,T}^2(\epsilon)=4m(m-2)\left(\widehat{\omega}_{1}^0\right)^{2m-2} \left[\left(\widehat{\omega}_{2}^0\right) -\left(\widehat{\omega}_{1}^0\right)^2\right] +\left(\widehat{\omega}_{2}^0\right)^m-\left(\widehat{\omega}_{1}^0\right)^{2m}\\
		&\quad+8\sum\limits_{k=1}^{m-1}{\left\{\left(\widehat{\omega}_{1}^0\right)^{2k} \left[\left(\widehat{\omega}_{2}^0\right)^{m-k}-\left(\widehat{\omega}_{1}^0\right)^{2m-2k}\right] -m\left(\widehat{\omega}_{1}^0\right)^{2m-2}\left[\left(\widehat{\omega}_{2}^0\right) -\left(\widehat{\omega}_{1}^0\right)^2\right]\right\}}.
	\end{aligned}
\end{equation}
$H_{0}$ will be rejected with level $\alpha$ if $|W_{m,T}(\epsilon)|>{z_{\alpha{/2}}},$ where $z_{\alpha{/2}}$ is the upper $\alpha{/2}$ quantile of the standard normal distribution $N(0,1).$

The BDS test has some attractive properties, which makes it broadly used in economics and finance. However, it also has a shortcoming that can not be ignored: over-rejecting the null hypothesis, which will weaken with the increase of sample size and become no longer obvious when $T\geqslant{3000}$. In this paper, our target is to improve this defect of the BDS test. To explore the reason for over-rejection, we studied the asymptotic theory of the BDS test given by Broock et al. (1996) and find that to obtain the asymptotic normality of the BDS test under $H_{0}$, they directly applied the CLT of U-statistic to $C_{m,T}(\epsilon)$ and got the CLT (central limit theorem) of $C_{m,T}(\epsilon)$ as below:
\begin{equation}\label{CLTCI}
	\sqrt{T}\frac{\left(C_{m,T}(\epsilon)-\left(\omega_{1}^0\right)^m\right)} {v_{m}(\epsilon)} \stackrel{D}{\underset{under H_{0}}{\longrightarrow}}{N(0,1)},
\end{equation}
where
$\frac{1}{4}v_{m}^{2}(\epsilon)=\left(\omega_{2}^0\right)^{m} -\left(\omega_{1}^0\right)^{2m}+2\sum\limits_{k=1}^{m-1} \left[\left(\omega_{2}^0\right)^{m-k}\left(\omega_{1}^0\right)^{2k}-\left(\omega_{1}^0\right)^{2m}\right],$ $\omega_{1}^0=P(|u_{1}-u_{2}|<{\epsilon}),$
$\omega_{2}^0=P(|u_{1}-u_{2}|<{\epsilon},|u_{2}-u_{3}|<{\epsilon}).$
Then, they replaced $\left(\omega_{1}^0\right)^{m}$ with $\left(\widehat{\omega}_{1}^0\right)^m$ and got the BDS test in \eqref{BDStest} based on the delta method and Slutsky's theorem. Therefore, easy to understand that the asymptotic theory of the BDS test holds on the premise that $C_{m,T}(\epsilon)$ is a U-statistic. However, unfortunately, $C_{m,T}(\epsilon)$ isn't an U-statistic, because $Y_{t}^m$ and $Y_{s}^m$ involved in $C_{m,T}(\epsilon)$ are not independent when $m\geq{2}$ and $0<s-t<m$.
Therefore, treating $C_{m,T}(\epsilon)$ as an U-statistic to get \eqref{CLTCI} will definitely cause bias, and the over-rejection problem of the BDS test is a manifestation of this bias.

\section{The Asymptotic Theory of the Correlation Integral}
To improve the over-rejection problem of the BDS test, we divide the correlation integral $C_{m,T}(\epsilon)$ into the following two parts according to $0<s-t<m$ and $s-t\geq{m}$:
\begin{equation}\label{DivitionCI}
	\begin{aligned}
		C_{m,T}(\epsilon)&=\frac{1}{N}\sum_{1\leq{t}<s\leq{T_{m}}}I_{\epsilon}(\|Y_{t}^m-Y_{s}^m\|)=\frac{1}{N}\sum_{t=1}^{T_m-1}\sum_{s=t+1}^{T_m}I_{\epsilon}(\|Y_{t}^m-Y_{s}^m\|)\\ &=\breve{C}_{m,T}(\epsilon)+\frac{N_{0}}{N}\widetilde{C}_{m,T}(\epsilon),
	\end{aligned}
\end{equation}
where $N=\binom{T_{m}}{2}$, $N_{0}=\binom{T_{m}-m+1}{2}$,
\begin{equation}\label{breveCI}
	\breve{C}_{m,T}(\epsilon)=\frac{1}{N}\sum\limits_{1\leq{s-t}\leq{m-1}} I_{\epsilon}(\|Y_{t}^m-Y_{s}^m\|)
	=\frac{1}{N}\sum_{k=1}^{m-1}\sum_{t=1}^{T_{m}-k} I_{\epsilon}(\|Y_{t}^m-Y_{t+k}^m\|),
\end{equation}
\begin{equation}\label{tildeCI}
	\widetilde{C}_{m,T}(\epsilon)=\frac{1}{N_{0}} \sum\limits_{s-t\geq{m}}I_{\epsilon}(\|Y_{t}^m-Y_{s}^m\|)
	=\frac{1}{N_0}\sum_{t=1}^{T_m-m}\sum_{s=t+m}^{T_m}I_{\epsilon}(\|Y_{t}^m-Y_{s}^m\|).
\end{equation}
Easy to find that under $H_0$ the independence betweenn $Y_t^m$ and $Y_s^m$ in $\widetilde{C}_{m,T}(\epsilon)$ is always satisfied, while the relationship between $Y_t^m$ and $Y_{t+k}^m$ in $\breve{C}_{m,T}(\epsilon)$ is a little more complicated.

Before stating the main results of this section, for brievity, we define the following two functions with $m$, $k$ and $d$ as parameters in combination with the preceding notations:
\begin{equation}\label{Wmk}
\begin{aligned}
	W_m(k,d)&=\left(\omega_{h+1}^{r+1}\right)^{\tau}\left(\omega_{h+1}^{r}\right)^{i-\tau}\left(\omega_h^{r}\right)^{k-i}\left(1-I_0(i-\tau)\right)\\
	&+\left(\omega_{h+1}^{r+1}\right)^i\left(\omega_h^{r+1}\right)^{d-rk-i}\left(\omega_h^{r}\right)^{k-d-rk}I_0(i-\tau),
\end{aligned}
\end{equation}
and 
\begin{equation}\label{Umk}
U_m(k,d)=\left(\omega_{h+1}^{h+1}\right)^{i-d}\left(\eta_{h+1}^h\right)^{d}\left(\omega_h^{h+1}\right)^{d}\left(\omega_h^h\right)^{k-i-d}-W_m(k,0)W_m(m,0),
\end{equation}
where $h=\lfloor{\frac{m}{k}}\rfloor$, $i=m-hk$, $r=\lfloor{\frac{d}{k}}\rfloor$, $\tau=d-rk$, $m, k, d\in{\mathbb{Z}^{+}}$.
Easy to verify 
\begin{equation}\label{EWmk}
W_m(k,0)=\left(\omega_{h}^0\right)^{k-i}\left(\omega_{h+1}^0\right)^i \text{\ and\ } W_m(m,0)=\left(\omega_1^0\right)^m.
\end{equation}  
Let $W_m^{(h)}(k,0)$ and $W_m^{(h+1)}(k,0)$ denote the partial derivatives of $W_m(k,0)$ with respect to $\omega_h^0$ and $\omega_{h+1}^0$ respectively, specifically, 
\begin{equation}\label{Wmkhh}
\begin{aligned}
	&W_m^{(h)}(k,0)=(k-i)\left(\omega_h^0\right)^{k-i-1}\left(\omega_{h+1}^0\right)^i,\quad  W_m^{(1)}(m,0)=m\left(\omega_1^0\right)^{m-1}
	\\
	&W_m^{(h+1)}(k,0)=i\left(\omega_h^0\right)^{k-i}\left(\omega_{h+1}^0\right)^{i-1},
\end{aligned}
\end{equation}
As before, we use 
$\widehat{W}_m(k,d)$, $\widehat{U}_m(k,d)$, $\widehat{W}_m^{(h)}(k,0)$ and $\widehat{W}_m^{(h+1)}(k,0)$ to denote the consistent estimation of $W_m(k,d)$, $U_m(k,d)$, $W_m^{(h)}(k,0)$ and $W_m^{(h+1)}(k,0)$ respectively.
Similar notations are used throughout the paper without further explanation.

With the above notations, we introduce the main result of this section, that is, the CLT of the correlation integral $C_{m,T}(\epsilon)$ under $H_0$.

\begin{theorem}\label{ThCLTCI}
	If $\{u_{t}\}$ is i.i.d., for fixed $m$,
	\begin{equation}\label{RCLTCI}
	\frac{C_{m,T}(\epsilon)-\mu_{m}}{\sigma_{m}} \stackrel{D}{\longrightarrow}{N(0,1)},
	\end{equation}
	where 
	\begin{equation}\label{ECI}
		\begin{aligned}
			\mu_{m}=\mathbf{E}C_{m,T}(\epsilon) &=\frac{1}{N}\sum_{k=1}^{m-1}(T_{m}-k)W_m(k,0)+\frac{N_{0}}{N}W_m(m,0),
		\end{aligned}
	\end{equation}
	\begin{equation}\label{CIsigma}
		\begin{aligned}
			\sigma_{m}^2&=\breve{\sigma}_m^2+\left(\frac{N_{0}}{N}\right)^2\widetilde{\sigma}_m^2,
		\end{aligned}
	\end{equation}
	where $\widetilde{\sigma}_m^2$ is equal to the variance in Theorem 2.1 of Luo et al.(2020),
	\begin{equation}\label{CIsigma1}
		\begin{aligned}
			\breve{\sigma}_m^2&=\frac{2}{N^2}
			\sum_{k=1}^{m-1}\mathcal{M}_{T,m}(k) \left\{2\sum_{d_{1}=1}^{m}\left[W_m(k,d_1)-W_m(k,0)W_m(m,0)\right] \right. \\
			&+\left[1-I_0(k-2i)\right]\left[\sum_{d_2=1}^iU_m(k,d_2)+ (k-2i)U_m(k,i)+\sum_{d_2=k-i+1}^{k-1}U_m(k,k-d_2)\right]\\
			&\left. +I_0(k-2i)\left[\sum_{d_2=1}^{k-i}U_m(k,d_2)+(2i-k)U_m(k,k-i)+\sum_{d_2=i+1}^{k-1}U_m(k,k-d_2)\right]\right\},
		\end{aligned}
	\end{equation}
	where $\mathcal{M}_{T,m}(k)=(T-4m-k+3)(T-4m-k+4)$, $i=m-hk$, $h=\lfloor{\frac{m}{k}}\rfloor$.
\end{theorem}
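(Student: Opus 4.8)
The plan is to make the decomposition \eqref{DivitionCI} do the structural work. It writes $C_{m,T}(\epsilon)=\breve C_{m,T}(\epsilon)+\frac{N_0}{N}\widetilde C_{m,T}(\epsilon)$, where $\widetilde C_{m,T}(\epsilon)$ collects the pairs with $s-t\ge m$, whose two windows $Y_t^m,Y_s^m$ are independent under $H_0$, and $\breve C_{m,T}(\epsilon)$ collects the $O(T)$ overlapping pairs with $1\le s-t\le m-1$. Since $\widetilde C_{m,T}(\epsilon)$ is exactly the statistic treated in Luo et al. (2020), I would import from their Theorem 2.1 its mean $W_m(m,0)=(\omega_1^0)^m$, its variance $\widetilde\sigma_m^2$, and its asymptotic normality. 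The whole proof then reduces to three tasks: computing $\mathbf{E}\breve C_{m,T}(\epsilon)$, computing the extra variance that $\breve C_{m,T}(\epsilon)$ contributes (chiefly through its covariance with $\widetilde C_{m,T}(\epsilon)$), and showing that the limiting law is carried entirely by $\widetilde C_{m,T}(\epsilon)$.

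For the mean I would use linearity of expectation. A single overlapping summand at lag $k$ satisfies $\|Y_t^m-Y_{t+k}^m\|<\epsilon$ iff $|u_{t+j}-u_{t+k+j}|<\epsilon$ for $0\le j\le m-1$; grouping these $m$ constraints by the residue of their indices modulo $k$ breaks them into $k$ mutually independent chains, $i=m-hk$ of them of length $h+1$ and $k-i$ of length $h$ with $h=\lfloor m/k\rfloor$. Under i.i.d. this gives $\mathbf{E}I_\epsilon(\|Y_t^m-Y_{t+k}^m\|)=(\omega_h^0)^{k-i}(\omega_{h+1}^0)^i=W_m(k,0)$, in agreement with \eqref{EWmk}. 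Summing the $T_m-k$ terms at each lag $k$ and adding the far contribution $\frac{N_0}{N}W_m(m,0)$ produces \eqref{ECI} exactly.

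For the variance I would expand
\[
\sigma_m^2=\mathbf{Var}(\breve C_{m,T})+\Big(\tfrac{N_0}{N}\Big)^2\mathbf{Var}(\widetilde C_{m,T})+2\tfrac{N_0}{N}\,\mathbf{Cov}(\breve C_{m,T},\widetilde C_{m,T}).
\]
The middle term is $(N_0/N)^2\widetilde\sigma_m^2$ by Luo et al. (2020). The self-variance $\mathbf{Var}(\breve C_{m,T})$ contains only $O(T)$ non-vanishing covariances among $O(T)$ indicators, so it is $O(T^{-3})$ and falls below the retained precision; hence, to order $O(T^{-2})$, $\breve\sigma_m^2$ is the cross term $2\frac{N_0}{N}\mathbf{Cov}(\breve C_{m,T},\widetilde C_{m,T})$. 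The substantive calculation is this cross-covariance: for a fixed overlapping pair at lag $k$, occupying the index block $\{t,\dots,t+k+m-1\}$, and a far pair, the covariance is non-zero only when one window of the far pair meets that block, which leaves its other endpoint free over $O(T)$ positions and yields the exact count $\mathcal{M}_{T,m}(k)$ and the order $O(T^{-2})$. Each admissible relative shift $d$ of the intersecting window defines a joint event whose probability is read off the chain structure as a product of the quantities $\omega_l^r$, $\eta_l^{l-1}$, $\xi_l^\kappa$ of \eqref{omegalr}--\eqref{xi}; subtracting the marginal product $W_m(k,0)W_m(m,0)$ turns each contribution into the covariance summands $W_m(k,d_1)-W_m(k,0)W_m(m,0)$ and $U_m(k,d_2)$ appearing in \eqref{CIsigma1}, with the indicator $I_0(k-2i)$ separating the geometries in which the overlapping window is, or is not, long enough to be met on both sides. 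I expect this enumeration to be the main obstacle: one must track, for each shift $d$, precisely which residue-chains are lengthened from $h$ to $h+1$ or are joined through a shared coordinate, and pair each configuration with the correct one of $\omega,\eta,\xi$ --- this is where all the round-down and indicator bookkeeping of \eqref{Wmk}--\eqref{CIsigma1} originates.

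Finally, for the limit law I would write $C_{m,T}-\mu_m=(\breve C_{m,T}-\mathbf{E}\breve C_{m,T})+\frac{N_0}{N}(\widetilde C_{m,T}-\mathbf{E}\widetilde C_{m,T})$. Because $\mathbf{Var}(\breve C_{m,T})=O(T^{-3})$ while $\widetilde\sigma_m^2=O(T^{-1})$, the first summand is negligible compared with the standard deviation of the second, so the fluctuation of $C_{m,T}$ is asymptotically that of $\widetilde C_{m,T}$, which is asymptotically normal by Theorem 2.1 of Luo et al. (2020). Since $N_0/N\to1$ and the $O(T^{-2})$ pieces of $\sigma_m^2$ vanish relative to its $O(T^{-1})$ leading term, Slutsky's theorem delivers $(C_{m,T}-\mu_m)/\sigma_m\stackrel{D}{\longrightarrow}N(0,1)$, which is \eqref{RCLTCI}. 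The retained $O(T^{-2})$ terms do not alter the limit but are exactly what sharpens the finite-sample normalization and removes the over-rejection.
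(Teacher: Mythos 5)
Your proposal follows the paper's own route essentially step for step: the same decomposition \eqref{DivitionCI}, the same residue-class chain argument yielding $\mathbf{E}I_{\epsilon}(\|Y_t^m-Y_{t+k}^m\|)=W_m(k,0)$ and hence \eqref{ECI}, the same variance bookkeeping in which $\mathbf{Var}(\breve C_{m,T})=O(T^{-3})$ is discarded, the cross-covariance with $\widetilde C_{m,T}$ supplies $\breve\sigma_m^2$ with the exact count $\mathcal{M}_{T,m}(k)$ and the $W_m(k,d_1)$, $U_m(k,d_2)$ summands, and Theorem 2.1 of Luo et al. (2020) supplies $\widetilde\sigma_m^2$ together with the limiting normality (your Slutsky step is in fact more explicit than the paper's appendix). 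The one gloss is your claim that the covariance is non-zero \emph{only} when a single window of the far pair meets the block $\{t,\dots,t+k+m-1\}$ --- it is also non-zero when both windows meet it, but as in the paper's term $\Phi_{3}(k)=O(T)$ that case contributes only $O(T^{-3})$ after normalization and is negligible at the retained precision.
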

\begin{proof}[Proof of Theorem \ref{ThCLTCI}]
	is shown in Appendix A.
\end{proof}
\begin{remark}
	Unlike the CLT of $C_{m,T}(\epsilon)$ proposed by Broock et al. (1996), shown in \eqref{CLTCI}, here $\mu_m$ is  the exact expectation of $C_{m,T}(\epsilon)$, so there's no location bias for the CLT here.
\end{remark}
\begin{remark}
	Another difference from the CLT given by Broock et al. (1996) is that the asymptotic variance here includes all terms of order $O(T^{-1})$ and $O(T^{-2})$, which is necessary to construct the RBDS test to achieve good finite sample performance.
\end{remark}

\section{The Revised BDS (RBDS) Test}

In this section, we state the RBDS test and its asymptotic theory under $H_0$. First, we obtain the consistent estimation of $\mu_m$ by applying $\widehat{\omega}_{1}^0$, $\widehat{\omega}_{h}^0$ and $\widehat{\omega}_{h+1}^0$ to replace all unknown parameters in $\mu_m$, which is shown below:
\begin{equation}\label{estmu}
	\mu_{m,T}=\frac{1}{N}\sum\limits_{k=1}^{m-1}(T_{m}-k) \widehat{W}_m(k,0) +\frac{N_{0}}{N}\widehat{W}_m(m,0).
\end{equation}
Let $
\mathcal{K}_{m,T}(\epsilon)=C_{m,T}(\epsilon)-\mu_{m,T},
$
according to the delta method, the asymptotic variance of $\mathcal{K}_{m,T}(\epsilon)$ is approximately equal to the asymptotic variance of the following linear combination:
\begin{equation}\label{tildeKmT}
	\begin{aligned}
		\widetilde{\mathcal{K}}_{m,T}(\epsilon)=&\left[C_{m,T}(\epsilon)-\mu_{m}\right]\\ &-\frac{1}{N}\sum_{k=1}^{m-1}(T_{m}-k)W_m^{(h)}(k,0) \left[\left(\widehat{\omega}_{h}^0\right)-\left(\omega_{h}^0\right)\right]\\ &-\frac{1}{N}\sum_{k=1}^{m-1}(T_{m}-k)W_m^{(h+1)}(k,0) \left[\left(\widehat{\omega}_{h+1}^0\right)-\left(\omega_{h+1}^0\right)\right]\\ &-\frac{N_{0}}{N}W_m^{(1)}(m,0) \left[\left(\widehat{\omega}_{1}^0\right)-\left(\omega_{1}^0\right)\right].
	\end{aligned}
\end{equation}
Then, after some routine calculations, the CLT of $\mathcal{K}_{m,T}(\epsilon)$ can be obtained, which is given in the following Theorem \ref{ThCLTKCI}.
\begin{theorem}\label{ThCLTKCI}
	If $\{u_{t}\}$ is i.i.d., for $m\geq{2}$,
	\begin{equation}\label{}
		\frac{C_{m,T}(\epsilon)-\mu_{m,T}}{\nu_{m}} \stackrel{D}{\longrightarrow}{N(0,1)},
	\end{equation}
	where $\mu_{m,T}=\eqref{estmu}$,
	\begin{equation}\label{KCLsigma}
		\nu_{m}^2=\sigma_{m,m}^2 +\sigma_{1,1}^2-\sigma_{m,h}^2
		-\sigma_{m,h+1}^2 -\sigma_{m,1}^2+\sigma_{1,h}^2 +\sigma_{1,h+1}^2,
	\end{equation}
	where
	$\sigma_{m,m}^2=\eqref{CIsigma}$, 
	\begin{equation*}
		\begin{aligned}
			&\sigma_{1,1}^2=2\left(\frac{N_{0}}{N}W_m^{(1)}(m,0)\right)^2C_{T,1} \left[\left(\omega_{1}^0\right)+2(T-2)\left(\omega_{2}^0\right) -(2T-3)\left(\omega_{1}^0\right)^2\right],\\
					\end{aligned}
			\end{equation*} 
			\begin{equation*}
			\begin{aligned}
			&\sigma_{m,h}^2=\frac{4}{N^2}\sum_{k=1}^{m-1}(T_m-k)W_m^{(h)}(k,0)C_{T,h}M_{h+2}\\
			&\qquad \qquad \times{}  \left[W_m^{(1)}(m,0)\left(2\left(\omega_{h}^1\right)+\sum_{\kappa=1}^{h-1}\left(\xi_{h}^{\kappa}\right)\right)-m(h+1)W_m(m,0) \left(\omega_{h}^0\right) \right],\\
					\end{aligned}
			\end{equation*} 
			\begin{equation*}
			\begin{aligned}
			&\sigma_{m,h+1}^2=\frac{4}{N^2}\sum_{k=1}^{m-1}(T_m-k)W_m^{(h+1)}(k,0)C_{T,h+1}M_{h+3}\\
			&\qquad \qquad \times{}  \left[W_m^{(1)}(m,0)\left(2\left(\omega_{h+1}^1\right)+  \sum_{\kappa =1}^{h}\left(\xi_{h+1}^{\kappa}\right)\right)  -m(h+2)W_m(m,0) \left(\omega_{h+1}^0\right)\right],\\
			&\sigma_{m,1}^2=\frac{2N_{0}C_{T,1}}{N^2}W_m^{(1)}(m,0) 
			\left\{\sum\limits_{k=1}^{m-1}2\mathcal{N}_{T_m,k}\left[ W_m^{(h)}(k,0)\left(2\left(\omega_h^1\right)+\sum\limits_{\kappa=1}^{h_1}\left(\xi_h^{\kappa}\right)+i_1\left(\xi_h^{h-1}\right) \right) \right.\right.\\
			&\left. \qquad \qquad \qquad \qquad +W_m^{(h+1)}(k,0) \left(2\left(\omega_{h+1}^1\right)+\sum\limits_{\kappa=1}^{h_1}\left(\xi_{h+1}^{\kappa}\right)\right)-(m-k)W_m(k,0)\left(\omega_1^0\right)\right] \\
			& \qquad \qquad +\left[2W_m^{(1)}(m,0)\left(\left(M_3+(m-1)M_2\right)\left(\omega_1^1\right) +M_2\left(\left(\omega_1^0\right)+(m-1)\left(\eta_1^2\right)\left(\omega_1^0\right)^{-1}\right) \right) \right. \\
			&\left. \left.  \qquad\qquad\qquad\qquad -W_m(m,0)\left(\omega_1^0\right)\left(mM_3+(4m-2)M_2\right)\right]\right\},\\
			&\sigma_{1,h}^2=\frac{4N_{0}C_{T,1}}{N^2}W_m^{(1)}(m,0) \sum\limits_{k=1}^{m-1}(T_{m}-k)W_m^{(h)}(k,0)C_{T,h}/C_{T,h+1}\\ 
			& \qquad \qquad \times{} \left[2\left(\omega_h^1\right)+\sum\limits_{\kappa=1}^{h-1}\left(\xi_h^{\kappa}\right) -(h+1)\left(\omega_{1}^0\right)\left(\omega_{h}^0\right)\right],\\
			&\sigma_{1,h+1}^2=\frac{4N_{0}C_{T,1}}{N^2}W_m^{(1)}(m,0) \sum\limits_{k=1}^{m-1}(T_{m}-k)W_m^{(h+1)}(k,0)C_{T,h+1}/C_{T,h+2}\\ 
			& \qquad \qquad \times{} \left[2\left(\omega_{h+1}^1\right)+\sum\limits_{\kappa=1}^h\left(\xi_{h+1}^{\kappa}\right) -(h+2)\left(\omega_{1}^0\right)\left(\omega_{h+1}^0\right)\right],
		\end{aligned}
	\end{equation*}

	$\mathcal{N}_{T_m,k}=\binom{T_m-k}{2}$, $h=\lfloor{\frac{m}{k}}\rfloor$, $i=m-hk$,  $h_1=\lfloor{\frac{m-k}{k}}\rfloor$, $i_1=m-k-h_1k$. 
\end{theorem}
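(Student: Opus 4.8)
The plan is to obtain the statement from the linearization \eqref{tildeKmT}, which the delta method has already produced, thereby reducing the theorem to a joint central limit theorem together with an exact second-moment computation. Abbreviating $A=C_{m,T}(\epsilon)-\mu_m$ and denoting by $B_h$, $B_{h+1}$, $B_1$ the three correction sums of $\widetilde{\mathcal{K}}_{m,T}(\epsilon)$ carrying $\widehat\omega_h^0-\omega_h^0$, $\widehat\omega_{h+1}^0-\omega_{h+1}^0$ and $\widehat\omega_1^0-\omega_1^0$ (with $h=\lfloor m/k\rfloor$, so the subscript of the estimator varies with the summation index $k$ in the first two), I first note that $\mathcal{K}_{m,T}(\epsilon)=C_{m,T}(\epsilon)-\mu_{m,T}$ differs from $A+B_h+B_{h+1}+B_1$ only by the remainder of a Taylor expansion of $\mu_{m,T}$, in \eqref{estmu}, about $\mu_m$; since each $\widehat\omega_\bullet^0-\omega_\bullet^0$ is $O_p(T^{-1/2})$, this remainder is $o_p(\nu_m)$. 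It therefore suffices to show $\widetilde{\mathcal{K}}_{m,T}(\epsilon)/\nu_m\stackrel{D}{\to}N(0,1)$ and to identify $\nu_m^2=\mathbf{Var}(\widetilde{\mathcal{K}}_{m,T}(\epsilon))$ with \eqref{KCLsigma}.

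For the asymptotic normality I would establish a joint CLT for the normalized vector $(A,\widehat\omega_h^0-\omega_h^0,\widehat\omega_{h+1}^0-\omega_{h+1}^0,\widehat\omega_1^0-\omega_1^0)$. The three estimators are ordinary U-statistics of the i.i.d. sequence $\{u_t\}$ and are asymptotically normal, while $A/\sigma_m\stackrel{D}{\to}N(0,1)$ is Theorem \ref{ThCLTCI}; at leading order all four fluctuations are governed by their first-order Hoeffding projections onto single-coordinate kernels (the $\breve C_{m,T}(\epsilon)$ part of the split \eqref{DivitionCI} contributes only at the lower variance order $O(T^{-2})$ and is a negligible perturbation for the first-order CLT), so joint normality follows from the Cram\'er--Wold device. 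Because the coefficients multiplying the increments in \eqref{tildeKmT} are deterministic and convergent, $\widetilde{\mathcal{K}}_{m,T}(\epsilon)$ is a fixed linear combination of jointly asymptotically Gaussian quantities, and Slutsky's theorem then gives the studentized CLT once $\nu_m^2$ is pinned down.

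The substantive step is the variance. Expanding $\mathbf{Var}(A+B_h+B_{h+1}+B_1)$ into four variances and six covariances, I would sort the terms by order in $T$: since $A$ and $B_1$ are each $O_p(T^{-1/2})$ while $B_h$, $B_{h+1}$ carry the small $O(T^{-1})$ coefficients $\tfrac1N\sum_k(T_m-k)W_m^{(h)}(k,0)$ and are thus $O_p(T^{-3/2})$, one gets $\mathbf{Var}(A)=\sigma_{m,m}^2$, $\mathbf{Var}(B_1)=\sigma_{1,1}^2$ and $2\,\mathbf{Cov}(A,B_1)=-\sigma_{m,1}^2$ at the leading order $O(T^{-1})$; the remaining retained cross terms $2\,\mathbf{Cov}(A,B_h)=-\sigma_{m,h}^2$, $2\,\mathbf{Cov}(A,B_{h+1})=-\sigma_{m,h+1}^2$, $2\,\mathbf{Cov}(B_1,B_h)=\sigma_{1,h}^2$ and $2\,\mathbf{Cov}(B_1,B_{h+1})=\sigma_{1,h+1}^2$ are $O(T^{-2})$; and $\mathbf{Var}(B_h)$, $\mathbf{Var}(B_{h+1})$, $\mathbf{Cov}(B_h,B_{h+1})$ are $O(T^{-3})$ and discarded. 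The minus signs in \eqref{KCLsigma} are exactly the negative coefficients of the $B$'s in \eqref{tildeKmT}, and $\mathbf{Var}(A)$ is supplied verbatim by \eqref{CIsigma}; matching the survivors reproduces \eqref{KCLsigma}.

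The hard part will be evaluating the covariances $\mathbf{Cov}(C_{m,T}(\epsilon),\widehat\omega_j^0)$ and $\mathbf{Cov}(\widehat\omega_1^0,\widehat\omega_j^0)$ exactly through order $O(T^{-2})$. Each is an average, over all admissible index tuples, of the expectation of a product of two proximity indicators, and that expectation depends only on how many coordinates the two indicator blocks share; so the computation amounts to enumerating every overlap pattern between the $m$-block of $C_{m,T}(\epsilon)$ and the short block of $\widehat\omega_j^0$, each pattern contributing one of the probabilities $\omega_l^r$, $\eta_l^{l-1}$, $\xi_l^\kappa$ of \eqref{omegalr}--\eqref{xi} weighted by a combinatorial count that I would track with the falling-factorial quantities $C_{T,x}$, $M_x$ and $\mathcal{N}_{T_m,k}$. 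Using the split \eqref{DivitionCI} to separate the independent part $\widetilde C_{m,T}(\epsilon)$ (whose blocks meet the $\widehat\omega_j^0$ block only through their own coordinates) from $\breve C_{m,T}(\epsilon)$, and retaining only those patterns whose counts survive at orders $O(T^{-1})$ and $O(T^{-2})$, yields the bracketed expressions inside $\sigma_{m,h}^2$, $\sigma_{m,h+1}^2$, $\sigma_{m,1}^2$, $\sigma_{1,h}^2$ and $\sigma_{1,h+1}^2$. This bookkeeping, rather than any probabilistic subtlety, is where the real labor lies.
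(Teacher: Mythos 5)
Your proposal matches the paper's own proof essentially step for step: the paper likewise starts from the delta-method linearization \eqref{tildeKmT}, expands its variance into ten terms $V_1,\dots,V_{10}$, discards the variances and cross-covariance of the $\widehat{\omega}_h^0$- and $\widehat{\omega}_{h+1}^0$-corrections ($V_2$, $V_3$, $V_8$) as $O(T^{-3})$ exactly as your order bookkeeping predicts, and evaluates the surviving terms by enumerating overlap patterns of index sets (zero overlap giving zero by independence, larger overlaps weighted by falling-factorial counts in $C_{T,x}$, $M_x$, $\mathcal{N}_{T_m,k}$, including the overlap-two $M_2$-contributions inside $\sigma_{m,1}^2$), with the signs in \eqref{KCLsigma} arising from the negative coefficients in \eqref{tildeKmT} just as you say. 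The only difference is that you spell out the joint Cram\'er--Wold/Hoeffding-projection argument for asymptotic normality, which the paper compresses into ``after some routine calculations.''
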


\begin{proof}[Proof of Theorem \ref{ThCLTKCI}]
	is shown in Appendix B.
\end{proof}

\begin{remark}
	Each term constituting the asymptotic variance in Theorem \ref{ThCLTKCI} has an explicit expression, and if $m$ is fixed, the $\nu_m$ in \ref{ThCLTKCI} can be simplified to a neat result. 
\end{remark}

\begin{corollary}\label{CLTRBDS}
	If $\{u_{t}\}$ is i.i.d., for $m\geq{2}$,
	\begin{equation}\label{RBDS}
		M_{m,T}(\epsilon):=\frac{C_{m,T}(\epsilon)-\mu_{m,T}} {\nu_{m,T}} \stackrel{D}{\longrightarrow}{N(0,1)},
	\end{equation}
	where $\mu_{m,T}=\eqref{estmu}$, $\nu^2_{m,T}$ is the consistent estimation of $\nu^2_{m}$, that is, $\nu^2_{m,T}$ is derived by replacing all unknown paremeters contained in the asymptotic variance $\nu^2_{m}$ of Theorem \ref{ThCLTKCI} with their consistent estimators.
\end{corollary}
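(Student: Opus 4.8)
The plan is to deduce Corollary~\ref{CLTRBDS} from Theorem~\ref{ThCLTKCI} by a studentization argument, since the numerator $C_{m,T}(\epsilon)-\mu_{m,T}$ is literally the same object in both statements; the only gap to close is that the deterministic normalizer $\nu_m$ of Theorem~\ref{ThCLTKCI} is replaced by the data-driven $\nu_{m,T}$. The device is Slutsky's theorem applied to the factorization
\begin{equation*}
	M_{m,T}(\epsilon)=\frac{C_{m,T}(\epsilon)-\mu_{m,T}}{\nu_{m,T}}=\frac{C_{m,T}(\epsilon)-\mu_{m,T}}{\nu_{m}}\cdot\frac{\nu_{m}}{\nu_{m,T}}.
\end{equation*}
By Theorem~\ref{ThCLTKCI} the first factor converges in distribution to $N(0,1)$, so it suffices to prove that $\nu_{m,T}^2/\nu_m^2\stackrel{P}{\longrightarrow}1$.

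To establish this ratio limit I would first record the consistency of the building blocks. Each of $\widehat{\omega}_l^r$, $\widehat{\eta}_l^{l-1}$ and $\widehat{\xi}_l^{\kappa}$ is a U-statistic whose kernel is a product of indicators, estimating the corresponding probability $\omega_l^r$, $\eta_l^{l-1}$, $\xi_l^{\kappa}$; hence by the weak law of large numbers for U-statistics each converges in probability to its population target under $H_0$. Because $\nu_{m,T}^2$ is obtained from $\nu_m^2$ purely by substituting these estimators for the population quantities, leaving every $T$-dependent coefficient ($N$, $N_0$, $C_{T,x}$, $M_x$, and the summation weights) untouched, the continuous mapping theorem shows that each estimated constituent $\widehat{\sigma}_{\cdot,\cdot}^2$ differs from its population counterpart $\sigma_{\cdot,\cdot}^2$ only through factors that tend to one in probability.

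The delicate point is that $\nu_m^2$ itself vanishes at rate $T^{-1}$ (the dominant contribution coming from the genuine U-statistic variance $\widetilde{\sigma}_m^2$ in \eqref{CIsigma}, while the $\breve{\sigma}_m^2$-type and the differenced terms are of smaller order), so $\nu_{m,T}^2/\nu_m^2$ is a ratio of two null sequences whose limit cannot be read off term by term. I would therefore factor out the common leading order, writing $\nu_m^2=T^{-1}\bigl(c_m+o(1)\bigr)$ and $\nu_{m,T}^2=T^{-1}\bigl(\widehat{c}_m+o_P(1)\bigr)$, where $c_m$ is the coefficient of $T^{-1}$ in $\nu_m^2$, a fixed continuous function of the population probabilities, and $\widehat{c}_m$ is its plug-in. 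Consistency of the estimators gives $\widehat{c}_m\stackrel{P}{\longrightarrow}c_m$, and provided $c_m>0$ the ratio converges in probability to one, closing the argument through Slutsky's theorem.

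I expect the main obstacle to be the non-degeneracy requirement $c_m>0$: one must verify that the leading $O(T^{-1})$ coefficient of $\nu_m^2$ survives the cancellations among the seven signed terms in \eqref{KCLsigma}, so that the normalization is well-defined and the ratio limit is genuinely $1$ rather than indeterminate. Assembling the explicit leading coefficients of $\sigma_{m,m}^2,\sigma_{1,1}^2,\ldots,\sigma_{1,h+1}^2$ and confirming that their signed sum remains strictly positive for $\epsilon>0$ and $m\ge2$ is the one computation I would carry out with care; once it is in hand, the remaining steps reduce to routine applications of the U-statistic law of large numbers, the continuous mapping theorem, and Slutsky's theorem.
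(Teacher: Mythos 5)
Your proposal is correct and follows essentially the same route as the paper, whose entire proof is the one-line observation that $\nu_{m,T}^2$ is a consistent estimate of $\nu_m^2$ so that Slutsky's theorem yields \eqref{RBDS}. You merely make explicit what the paper leaves implicit --- that because $\nu_m^2=O(T^{-1})$ vanishes, ``consistency'' must be read as ratio consistency $\nu_{m,T}^2/\nu_m^2\stackrel{P}{\longrightarrow}1$, which in turn requires the leading coefficient of $\nu_m^2$ to be strictly positive --- a tightening of the same Slutsky argument rather than a different approach.
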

\begin{proof}[Proof of Corollary \ref{CLTRBDS}]
	Since $\nu_{m,T}^2$ is the consistent estimation of $\nu_{m}^2$, based on the Slutsky's theorem, easy to get \eqref{RBDS}.
\end{proof}

$M_{m,T}(\epsilon)$ in \eqref{RBDS} is the RBDS test we propose. The null hypothesis will be rejected, when $\left|M_{m,T}(\epsilon)\right|>z_{\alpha/2}$, where $z_{\alpha/2}$ is the upper $\alpha/2$ quantile of the standard normal distribution.

\section{Finite Sample Behavior of the RBDS Test}

In this section, we design four experiments to compare the different performances of  BDS test and RBDS test. Among them, the main purpose of the first two experiments is to compare the empirical sizes and powers of these two tests under different sample sizes. And the purpose of the latter two experiments is to analyze the different performances of BDS test and RBDS test affected by parameter estimation when they are used to evaluate the goodness-of-fit for a model. In all experiments, the significant level is denoted by $\alpha$ and $\alpha=0.05, 0.1$, and the setting of parameter $m$ is the same as that set in Brook et al. (1996): $m=2, 3$. Besides, the experiment was repeated 1000 times under each parameter setting.

The four experiments are designed as follows:
\begin{itemize}
\item Experiment 1: $\{u_{t}\}\stackrel{\mathrm{i.i.d.}}{\sim}N(0,1)$.
\item Experiment 2: $\{u_{t}\}$ is generated from the following GARCH(1,1) model:
\begin{equation}\label{simu:garch1}
	X_{t}=h_{t}e_{t}, \qquad  h_{t}=\sqrt{1+0.1X_{t-1}^2+0.1h_{t-1}^2}, \quad e_{t}\overset{\mathrm{i.i.d.}}{\thicksim} \mathrm{N}(0,1).
\end{equation}
\end{itemize}
Throughout Experiment 1 and 2, the length $T$ of $\{u_t\}$ is chosen to be 200, 400, 600, 800, 1000, 3000, the parameter $\epsilon$ involved in these two tests is set: $\epsilon=0.5\sqrt{\mathbf{Var}(u_{t})}$, which is recommended by Broock et al. (1996). These results are shown in Table \ref{size} and \ref{power} respectively.

\begin{itemize}
	\item Experiment 3: $\{\hat{u}_{t}\}$ is the estimation residuals of the following GARCH(1,1) model:
	\begin{equation}\label{simu:garch2}
		X_{t}=h_{t}e_{t}, \qquad  h_{t}=\sqrt{1+0.1X_{t-1}^2+0.85h_{t-1}^2}, \quad e_{t}\overset{\mathrm{i.i.d.}}{\thicksim} \mathrm{N}(0,1).
	\end{equation}
	\item Experiment 4: $\{\hat{u}_{t}\}$  is the estimation residuals of the following EGARCH(1,1) model:
	\begin{equation}\label{simu:egarch}
		X_{t}=h_{t}e_{t}, \  \log{h^2_{t}}=0.01(|X_{t-1}|+0.15X_{t-1})/h_{t-1}+0.9\log{h_{t-1}^2},\ e_{t}\overset{\mathrm{i.i.d.}}{\thicksim} \mathrm{N}(0,1).
	\end{equation}
\end{itemize}
The data generating processes considered in Experiment 3 and 4 are representative of the type of the estimates obtained with high frequency stock market returns, which are chosen with reference to De Lima (1996). For each model, we also refer to De Lima (1996) and set other parameters as follows: the $\epsilon=\epsilon_0\sqrt{\mathbf{Var}(u_{t})}$, $\epsilon_0=0.50, 1.00, 1.25$, and $T=500, 1000$, which is the length of $\{\hat{u}_t\}$. Besides, we compute BDS test and RBDS test on four different sequences: the innovations $\{u_t\}$, the estimation residuals $\{\hat{u}_t\}$, the square estimation residuals $\{\hat{u}_t^2\}$  and the logarithm of the square estimation residuals $\{\log{\hat{u}_t^2}\}$ of these two models. The frequency of rejections of the null hypothesis of iid over 1000 simulated realization of each model are given in Table \ref{tab:GARCH} and \ref{tab:EGARCH}, respectively. Note that the innovation sequence is observed without estimation and can therefore be used for comparison purpose.

As mentioned above, the empirical sizes and powers of the two tests are presented in Table \ref{size} and Table \ref{power} respectively. From Table \ref{size}, we see that the BDS test has a over-rejection problem and the smaller the length is, the more serious the problem is. While the RBDS test efficiently improves this problem. On the other hand, the results shown in Table \ref{power} reflect that the empirical power of RBDS test is slightly inferior to that of BDS test when $T$ is small, but with the increase of $T$, it increases at a higher rate to be similar to that of the BDS test.

Table \ref{tab:GARCH} and \ref{tab:EGARCH} present evidence of the size distortions of both BDS test and RBDS test that arise from applying them to the estimated residuals of ARCH-type models. But the distortion of RBDS test is not as serious as BDS test. In all situations of Experiment 3 and 4, both BDS test and RBDS test tend to over-reject the null hypothesis that an ARCH-type specification is appropriate. However, the logarithmic transformation correct this problem. Therefore, similar to BDS test, if the goodness-of-fit for a model is to be tested with RBDS test, especially for ARCH-type models, we recommend logarithmic transformation preprocessing of the estimate residuals of the model.

\begin{table}[h]
	\centering
	\caption{Empirical Sizes of BDS Test and RBDS Test}
	\label{size}
\begin{threeparttable}
		\begin{tabular}{@{}c|ccccc|ccccc@{}}
			\toprule[1.5pt]
			\multicolumn{1}{c|}{}& \multicolumn{5}{c|}{$\alpha=0.05$}   & \multicolumn{5}{c}{$\alpha=0.10$} \\
		\cmidrule(l){2-6}\cmidrule(l){7-11}
			&\multicolumn{2}{c}{$m=2$}&
			&\multicolumn{2}{c|}{$m=3$} &\multicolumn{2}{c}{$m=2$}
			&&\multicolumn{2}{c}{$m=3$} \\
			\cmidrule(lr){2-3} \cmidrule(lr){5-6} \cmidrule(lr){7-8}\cmidrule(l){10-11}
			$u_{t}\stackrel{\mathrm{i.i.d.}}{\sim}N(0,1)$&BDS&RBDS &&BDS&RBDS &BDS&RBDS &&BDS&RBDS \\ \midrule
			$T=200$   & 0.1776 & 0.0507 && 0.2225 & 0.0446  
			& 0.2598 & 0.1004 && 0.3045 & 0.0917\\
			$T=400$  & 0.1109 & 0.0495 && 0.1397 & 0.0454 
			& 0.1807 & 0.0987 && 0.2123 & 0.0935\\
			$T=600$ & 0.0894 & 0.0479 && 0.1082 & 0.0477
			& 0.1528 & 0.0969 && 0.1759 & 0.0947\\
			$T=800$ & 0.0810 & 0.0503 && 0.0898 & 0.0460
			& 0.1374 & 0.0957 && 0.1530 & 0.0924\\
			$T=1000$ & 0.0732 & 0.0478 && 0.0841 & 0.0484
			& 0.1273 & 0.0967 && 0.1479 & 0.0991\\
			$T=2000$ & 0.0665 & 0.0528 && 0.0680 & 0.0520 
			& 0.1205 & 0.1035 && 0.1190 & 0.0940\\
			$T=3000$ & 0.0564 & 0.0509 && 0.0581 & 0.0521 
			& 0.1145 & 0.1009 && 0.1175 & 0.0953 \\
			\bottomrule[1.5pt]
	\end{tabular}
\end{threeparttable}
\end{table}

\begin{table}[h]
	\centering
	\caption{Empirical Powers of BDS Test and RBDS Test}
	\label{power}
	\begin{threeparttable}
		\begin{tabular}{c|ccccc}
			\toprule[1.5pt]
			\multicolumn{1}{c|}{$\alpha=0.10$}   &\multicolumn{2}{c}{$m=2$}&&\multicolumn{2}{c}{$m=3$}\\
			\cmidrule{2-3}\cmidrule{5-6}
			\multicolumn{1}{c|}{GARCH(1,1)}&BDS&RBDS&&BDS&RBDS\\ \midrule
			$T=200$   & 0.3803 & 0.2074 & &0.3942 & 0.1599 \\
			$T=400$  & 0.4660 & 0.3427 && 0.4431 & 0.2920 \\
			$T=600$ & 0.5751 & 0.4854 & &0.5464 & 0.4213\\
			$T=800$ & 0.6677 & 0.5949 & &0.6295 & 0.5320
			\\
			$T=1000$ & 0.7600 & 0.7210 & &0.6890 &0.5920\\
			$T=2000$ & 0.9410 & 0.9300 & &0.9300 &0.9111\\
			$T=3000$ & 0.9843 & 0.9829 & &0.9818 & 0.9764\\
			\bottomrule[1.5pt]
	\end{tabular}
	\end{threeparttable}
\end{table}

\begin{table}[h]
	\centering
	\caption{Different Performances of BDS Test and RBDS Test in GARCH(1,1) Model Diagnosis}
	\label{tab:GARCH}
	\begin{threeparttable}
	\resizebox{\textwidth}{!}{%
		\begin{tabular}{@{}cc|ccccccccccc|ccccccccccc@{}}
			\toprule[1.5pt]
			\multicolumn{2}{c|}{\multirow{3}{*}{$m=2$}}                           & \multicolumn{11}{c|}{$T=500$}                                                                                                                       & \multicolumn{11}{c}{$T=1000$}                                                                                                                      \\ \cmidrule(l){3-24} 
			\multicolumn{2}{c|}{}                                                 & \multicolumn{2}{c}{$u_t$} &  & \multicolumn{2}{c}{$\hat{u}_t$} &  & \multicolumn{2}{c}{$\hat{u}_t^2$} &  & \multicolumn{2}{c|}{$\log{\hat{u}_t^2}$} & \multicolumn{2}{c}{$u_t$} &  & \multicolumn{2}{c}{$\hat{u}_t$} &  & \multicolumn{2}{c}{$\hat{u}_t^2$} &  & \multicolumn{2}{c}{$\log{\hat{u}_t^2}$} \\ \cmidrule(lr){3-4} \cmidrule(lr){6-7} \cmidrule(lr){9-10} \cmidrule(lr){12-15} \cmidrule(lr){17-18} \cmidrule(lr){20-21} \cmidrule(l){23-24} 
			\multicolumn{2}{c|}{}                                                 & BDS         & RBDS        &  & BDS            & RBDS           &  & BDS             & RBDS            &  & BDS                 & RBDS               & BDS         & RBDS        &  & BDS            & RBDS           &  & BDS             & RBDS            &  & BDS                & RBDS               \\ \cmidrule(r){1-4} \cmidrule(lr){6-7} \cmidrule(lr){9-10} \cmidrule(lr){12-15} \cmidrule(lr){17-18} \cmidrule(lr){20-21} \cmidrule(l){23-24} 
			\multicolumn{1}{c|}{\multirow{2}{*}{$\epsilon_0=0.50$}}  & $\alpha=0.05$ & 0.093       & 0.047       &  & 0.078          & 0.038          &  & 0.051           & 0.023           &  & 0.079               & 0.033              & 0.077       & 0.046       &  & 0.073          & 0.047          &  & 0.068           & 0.039           &  & 0.056              & 0.033              \\
			\multicolumn{1}{c|}{}                                 & $\alpha=0.10$  & 0.155       & 0.088       &  & 0.131          & 0.073          &  & 0.940            & 0.041           &  & 0.135               & 0.073              & 0.138       & 0.098       &  & 0.139          & 0.104          &  & 0.105           & 0.075           &  & 0.111              & 0.070               \\ \midrule
			\multicolumn{1}{c|}{\multirow{2}{*}{$\epsilon_0=1.00$}}    & $\alpha=0.05$ & 0.065       & 0.050        &  & 0.052          & 0.042          &  & 0.045           & 0.050            &  & 0.069               & 0.028              & 0.059       & 0.048       &  & 0.062          & 0.051          &  & 0.063           & 0.022           &  & 0.071              & 0.036              \\
			\multicolumn{1}{c|}{}                                 & $\alpha=0.10$  & 0.115       & 0.100         &  & 0.100            & 0.075          &  & 0.080            & 0.020            &  & 0.123               & 0.057              & 0.106       & 0.100         &  & 0.116          & 0.103          &  & 0.118           & 0.055           &  & 0.127              & 0.082              \\ \midrule
			\multicolumn{1}{c|}{\multirow{2}{*}{$\epsilon_0=1.25$}} & $\alpha=0.05$ & 0.077       & 0.053       &  & 0.062          & 0.043          &  & 0.038           & 0.004           &  & 0.079               & 0.02               & 0.057       & 0.051       &  & 0.059          & 0.052          &  & 0.059           & 0.022           &  & 0.062              & 0.029              \\
			\multicolumn{1}{c|}{}                                 & $\alpha=0.10$  & 0.141       & 0.115       &  & 0.098          & 0.077          &  & 0.088           & 0.015           &  & 0.144               & 0.055              & 0.100         & 0.092       &  & 0.117          & 0.096          &  & 0.106           & 0.051           &  & 0.111              & 0.065              \\ \midrule[1pt]
			\multicolumn{2}{c|}{\multirow{3}{*}{$m=3$}}                           & \multicolumn{11}{c|}{$T=500$}                                                                                                                       & \multicolumn{11}{c}{$T=1000$}                                                                                                                      \\ \cmidrule(l){3-24} 
			\multicolumn{2}{c|}{}                                                 & \multicolumn{2}{c}{$u_t$} &  & \multicolumn{2}{c}{$\hat{u}_t$} &  & \multicolumn{2}{c}{$\hat{u}_t^2$} &  & \multicolumn{2}{c|}{$\log{\hat{u}_t^2}$} & \multicolumn{2}{c}{$u_t$} &  & \multicolumn{2}{c}{$\hat{u}_t$} &  & \multicolumn{2}{c}{$\hat{u}_t^2$} &  & \multicolumn{2}{c}{$\log{\hat{u}_t^2}$} \\ \cmidrule(lr){3-4} \cmidrule(lr){6-7} \cmidrule(lr){9-10} \cmidrule(lr){12-15} \cmidrule(lr){17-18} \cmidrule(lr){20-21} \cmidrule(l){23-24} 
			\multicolumn{2}{c|}{}                                                 & BDS         & RBDS        &  & BDS            & RBDS           &  & BDS             & RBDS            &  & BDS                 & RBDS               & BDS         & RBDS        &  & BDS            & RBDS           &  & BDS             & RBDS            &  & BDS                & RBDS               \\ \midrule
			\multicolumn{1}{c|}{\multirow{2}{*}{$\epsilon_0=0.50$}}  & $\alpha=0.05$ & 0.128       & 0.041       &  & 0.201          & 0.099          &  & 0.138           & 0.103           &  & 0.078               & 0.040               & 0.083       & 0.045       &  & 0.286          & 0.218          &  & 0.204           & 0.189           &  & 0.076              & 0.053              \\
			\multicolumn{1}{c|}{}                                 & $\alpha=0.10$  & 0.195       & 0.090        &  & 0.295          & 0.169          &  & 0.188           & 0.152           &  & 0.140                & 0.081              & 0.155       & 0.094       &  & 0.361          & 0.299          &  & 0.275           & 0.253           &  & 0.123              & 0.097              \\ \midrule
			\multicolumn{1}{c|}{\multirow{2}{*}{$\epsilon_0=1.00$}}    & $\alpha=0.05$ & 0.060        & 0.046       &  & 0.153          & 0.118          &  & 0.122           & 0.071           &  & 0.066               & 0.035              & 0.057       & 0.047       &  & 0.289          & 0.262          &  & 0.209           & 0.169           &  & 0.069              & 0.055              \\
			\multicolumn{1}{c|}{}                                 & $\alpha=0.10$  & 0.113       & 0.089       &  & 0.228          & 0.191          &  & 0.181           & 0.115           &  & 0.113               & 0.078              & 0.109       & 0.082       &  & 0.361          & 0.330           &  & 0.269           & 0.223           &  & 0.118              & 0.091              \\ \midrule
			\multicolumn{1}{c|}{\multirow{2}{*}{$\epsilon_0=1.25$}} & $\alpha=0.05$ & 0.068       & 0.050        &  & 0.170           & 0.138          &  & 0.113           & 0.058           &  & 0.064               & 0.029              & 0.052       & 0.048       &  & 0.302          & 0.271          &  & 0.206           & 0.136           &  & 0.064              & 0.040               \\
			\multicolumn{1}{c|}{}                                 & $\alpha=0.10$  & 0.129       & 0.108       &  & 0.237          & 0.198          &  & 0.183           & 0.098           &  & 0.120                & 0.053              & 0.097       & 0.088       &  & 0.364          & 0.346          &  & 0.287           & 0.212           &  & 0.115              & 0.085              \\ \bottomrule[1.5pt]
		\end{tabular}%
	}
\begin{tablenotes}
	\footnotesize
	\item[-] $u_{t}$ is innovation, which is i.i.d. sample from $N(0,1)$.
	\item[-] $\hat{u}_t$ is the estimate residual of GARCH(1,1) model, $\hat{u}_t^2$ is the square of $\hat{u}_t$ and $\log{\hat{u}_t^2}$ is the logarithm of $\hat{u}_t^2$.
\end{tablenotes}
	\end{threeparttable}
\end{table}

\begin{table}[H]
	\centering
	\caption{Different Performances of BDS Test and RBDS Test in EGARCH(1,1) Model Diagnosis}
	\label{tab:EGARCH}
	\begin{threeparttable}
	\resizebox{\textwidth}{!}{%
	\begin{tabular}{@{}cc|ccccccccccc|ccccccccccc@{}}
		\toprule[1.5pt]
		\multicolumn{2}{c|}{\multirow{3}{*}{$m=2$}} &
		\multicolumn{11}{c|}{$T=500$} &
		\multicolumn{11}{c}{$T=1000$} \\ \cmidrule(l){3-24} 
		\multicolumn{2}{c|}{} &
		\multicolumn{2}{c}{$u_t$} &
		&
		\multicolumn{2}{c}{$\hat{u}_t$} &
		&
		\multicolumn{2}{c}{$\hat{u}_t^2$} &
		&
		\multicolumn{2}{c|}{$\log{\hat{u}_t^2}$} &
		\multicolumn{2}{c}{$u_t$} &
		&
		\multicolumn{2}{c}{$\hat{u}_t$} &
		&
		\multicolumn{2}{c}{$\hat{u}_t^2$} &
		&
		\multicolumn{2}{c}{$\log{\hat{u}_t^2}$} \\ \cmidrule(lr){3-4} \cmidrule(lr){6-7} \cmidrule(lr){9-10} \cmidrule(lr){12-15} \cmidrule(lr){17-18} \cmidrule(lr){20-21} \cmidrule(l){23-24} 
		\multicolumn{2}{c|}{} &
		BDS &
		RBDS &
		&
		BDS &
		RBDS &
		&
		BDS &
		RBDS &
		&
		BDS &
		RBDS &
		BDS &
		RBDS &
		&
		BDS &
		RBDS &
		&
		BDS &
		RBDS &
		&
		BDS &
		RBDS \\ \cmidrule(r){1-4} \cmidrule(lr){6-7} \cmidrule(lr){9-10} \cmidrule(lr){12-15} \cmidrule(lr){17-18} \cmidrule(lr){20-21} \cmidrule(l){23-24} 
		\multicolumn{1}{c|}{\multirow{2}{*}{$\epsilon_0=0.50$}} &
		$\alpha=0.05$ &
		0.093 &
		0.047 &
		&
		0.315 &
		0.230 &
		&
		0.226 &
		0.063 &
		&
		0.070 &
		0.033 &
		0.077 &
		0.046 &
		&
		0.496 &
		0.423 &
		&
		0.346 &
		0.169 &
		&
		0.068 &
		0.046 \\
		\multicolumn{1}{c|}{} &
		$\alpha=0.10$ &
		0.155 &
		0.088 &
		&
		0.405 &
		0.303 &
		&
		0.308 &
		0.100 &
		&
		0.121 &
		0.071 &
		0.138 &
		0.098 &
		&
		0.608 &
		0.546 &
		&
		0.447 &
		0.261 &
		&
		0.119 &
		0.086 \\ \cmidrule(r){1-4} \cmidrule(lr){6-7} \cmidrule(lr){9-10} \cmidrule(lr){12-15} \cmidrule(lr){17-18} \cmidrule(lr){20-21} \cmidrule(l){23-24} 
		\multicolumn{1}{c|}{\multirow{2}{*}{$\epsilon_0=1.00$}} &
		$\alpha=0.05$ &
		0.065 &
		0.050 &
		&
		0.313 &
		0.202 &
		&
		0.196 &
		0.014 &
		&
		0.071 &
		0.034 &
		0.059 &
		0.048 &
		&
		0.561 &
		0.45 &
		&
		0.273 &
		0.054 &
		&
		0.057 &
		0.029 \\
		\multicolumn{1}{c|}{} &
		$\alpha=0.10$ &
		0.115 &
		0.100 &
		&
		0.395 &
		0.288 &
		&
		0.262 &
		0.035 &
		&
		0.122 &
		0.069 &
		0.106 &
		0.100 &
		&
		0.678 &
		0.602 &
		&
		0.348 &
		0.102 &
		&
		0.107 &
		0.066 \\ \cmidrule(r){1-4} \cmidrule(lr){6-7} \cmidrule(lr){9-10} \cmidrule(lr){12-15} \cmidrule(lr){17-18} \cmidrule(lr){20-21} \cmidrule(l){23-24} 
		\multicolumn{1}{c|}{\multirow{2}{*}{$\epsilon_0=1.25$}} &
		$\alpha=0.05$ &
		0.077 &
		0.053 &
		&
		0.335 &
		0.159 &
		&
		0.185 &
		0.013 &
		&
		0.074 &
		0.024 &
		0.057 &
		0.051 &
		&
		0.559 &
		0.436 &
		&
		0.264 &
		0.032 &
		&
		0.069 &
		0.031 \\
		\multicolumn{1}{c|}{} &
		$\alpha=0.10$ &
		0.141 &
		0.115 &
		&
		0.438 &
		0.264 &
		&
		0.239 &
		0.025 &
		&
		0.129 &
		0.054 &
		0.100 &
		0.092 &
		&
		0.657 &
		0.553 &
		&
		0.324 &
		0.068 &
		&
		0.113 &
		0.068 \\ \midrule[1pt]
		\multicolumn{2}{c|}{\multirow{3}{*}{$m=3$}} &
		\multicolumn{11}{c|}{$T=500$} &
		\multicolumn{11}{c}{$T=1000$} \\ \cmidrule(l){3-24} 
		\multicolumn{2}{c|}{} &
		\multicolumn{2}{c}{$u_t$} &
		&
		\multicolumn{2}{c}{$\hat{u}_t$} &
		&
		\multicolumn{2}{c}{$\hat{u}_t^2$} &
		&
		\multicolumn{2}{c|}{$\log{\hat{u}_t^2}$} &
		\multicolumn{2}{c}{$u_t$} &
		&
		\multicolumn{2}{c}{$\hat{u}_t$} &
		&
		\multicolumn{2}{c}{$\hat{u}_t^2$} &
		&
		\multicolumn{2}{c}{$\log{\hat{u}^2_t}$} \\ \cmidrule(lr){3-4} \cmidrule(lr){6-7} \cmidrule(lr){9-10} \cmidrule(lr){12-15} \cmidrule(lr){17-18} \cmidrule(lr){20-21} \cmidrule(l){23-24} 
		\multicolumn{2}{c|}{} &
		BDS &
		RBDS &
		&
		BDS &
		RBDS &
		&
		BDS &
		RBDS &
		&
		BDS &
		RBDS &
		BDS &
		RBDS &
		&
		BDS &
		RBDS &
		&
		BDS &
		RBDS &
		&
		BDS &
		RBDS \\ \cmidrule(r){1-4} \cmidrule(lr){6-7} \cmidrule(lr){9-10} \cmidrule(lr){12-15} \cmidrule(lr){17-18} \cmidrule(lr){20-21} \cmidrule(l){23-24} 
		\multicolumn{1}{c|}{\multirow{2}{*}{$\epsilon_0=0.50$}} &
		$\alpha=0.05$ &
		0.128 &
		0.041 &
		&
		0.466 &
		0.383 &
		&
		0.303 &
		0.173 &
		&
		0.096 &
		0.049 &
		0.083 &
		0.045 &
		&
		0.645 &
		0.609 &
		&
		0.453 &
		0.327 &
		&
		0.075 &
		0.049 \\
		\multicolumn{1}{c|}{} &
		$\alpha=0.10$ &
		0.195 &
		0.090 &
		&
		0.564 &
		0.488 &
		&
		0.384 &
		0.252 &
		&
		0.149 &
		0.093 &
		0.155 &
		0.094 &
		&
		0.747 &
		0.707 &
		&
		0.562 &
		0.439 &
		&
		0.123 &
		0.102 \\ \cmidrule(r){1-4} \cmidrule(lr){6-7} \cmidrule(lr){9-10} \cmidrule(lr){12-15} \cmidrule(lr){17-18} \cmidrule(lr){20-21} \cmidrule(l){23-24} 
		\multicolumn{1}{c|}{\multirow{2}{*}{$\epsilon_0=1.00$}} &
		$\alpha=0.05$ &
		0.06 &
		0.046 &
		&
		0.426 &
		0.338 &
		&
		0.252 &
		0.051 &
		&
		0.067 &
		0.032 &
		0.057 &
		0.047 &
		&
		0.705 &
		0.652 &
		&
		0.385 &
		0.147 &
		&
		0.063 &
		0.049 \\
		\multicolumn{1}{c|}{} &
		$\alpha=0.10$ &
		0.113 &
		0.089 &
		&
		0.518 &
		0.444 &
		&
		0.34 &
		0.102 &
		&
		0.13 &
		0.073 &
		0.109 &
		0.082 &
		&
		0.782 &
		0.749 &
		&
		0.465 &
		0.233 &
		&
		0.105 &
		0.089 \\ \cmidrule(r){1-4} \cmidrule(lr){6-7} \cmidrule(lr){9-10} \cmidrule(lr){12-15} \cmidrule(lr){17-18} \cmidrule(lr){20-21} \cmidrule(l){23-24} 
		\multicolumn{1}{c|}{\multirow{2}{*}{$\epsilon_0=1.25$}} &
		$\alpha=0.05$ &
		0.068 &
		0.05 &
		&
		0.462 &
		0.354 &
		&
		0.255 &
		0.035 &
		&
		0.073 &
		0.04 &
		0.052 &
		0.048 &
		&
		0.715 &
		0.641 &
		&
		0.323 &
		0.098 &
		&
		0.065 &
		0.042 \\
		\multicolumn{1}{c|}{} &
		$\alpha=0.10$ &
		0.129 &
		0.108 &
		&
		0.556 &
		0.457 &
		&
		0.327 &
		0.076 &
		&
		0.114 &
		0.075 &
		0.097 &
		0.088 &
		&
		0.798 &
		0.756 &
		&
		0.396 &
		0.165 &
		&
		0.116 &
		0.084 \\ \bottomrule[1,5pt]
	\end{tabular}%
}
\begin{tablenotes}
	\footnotesize
	\item[-] $u_{t}$ is innovation, which is i.i.d. sample from $N(0,1)$.
\item[-] $\hat{u}_t$ is the estimate residual of EGARCH(1,1) model, $\hat{u}_t^2$ is the square of $\hat{u}_t$ and $\log{\hat{u}_t^2}$ is the logarithm of $\hat{u}_t^2$.
	\end{tablenotes}
\end{threeparttable}
\end{table}

\section{Empirical Analysis}
In this section, we aim to compare the performance of BDS test and RBDS test as model diagnostic tools in practice.  For this purpose, referring to Cryper and Chan (2008) and Tsay (2005), we select the following three datasets:
\begin{itemize}
	\item The daily values of a unit of the CREF stocks fund over the period from August 26, 2004 to August 15, 2006 , which is denoted as $\{P_{t,1}, t=1,\cdots,500\}$ and available at:  \url{http://homepage.divms.uiowa.edu/~kchan/TSA/Datasets/CREF.dat}. 
	\item The monthly log returns of IBM stock from January 1926 to December 1997, which is denoted as $\{P_{t,2}, t=1,\cdots,864\}$ and available at: \url{https://faculty.chicagobooth.edu/-/media/faculty/ruey-s-tsay/teaching/fts3/m-ibmvwew2697.txt}.
	\item The percentage changes of the exchange rate between mark and dollar in 10-minute intervals from June 5, 1989 to June 19, 1989, which is denoted as $\{P_{t,3}, t=1,\cdots,2488\}$ and available at: \url{https://faculty.chicagobooth.edu/-/media/faculty/ruey-s-tsay/teaching/fts3/exch-perc.txt}.
	\end{itemize}

For the CREF dataset, we let $R_{t,1}=100\times{(\log{P_{t,1}}-\log{P_{t-1,1}})}$, then $\{R_{1,t}\}$ represents the log return series of CREF, which is plotted in Figure \ref{series}\subref{stock_return}. The other two datasets are plotted in Figure \ref{series}\subref{series_IBM} and \ref{series}\subref{series_Exch_Rate} respectively. From these figures, it's easy to find that the data were more volatile over some time periods, in other words, there are ARCH effects in these datasets.  This phenomenon is conformed by the sample ACF of various functions of these data, which are shown in Figure \ref{ACF_series}.

\begin{figure}[!]
	\centering
	\vspace{-0.3cm}
	\subfloat[Log Returns of CREF]{
		\begin{minipage}[b]{0.45\textwidth}
			\centering
			\label{stock_return}
			\includegraphics[width=1\textwidth]{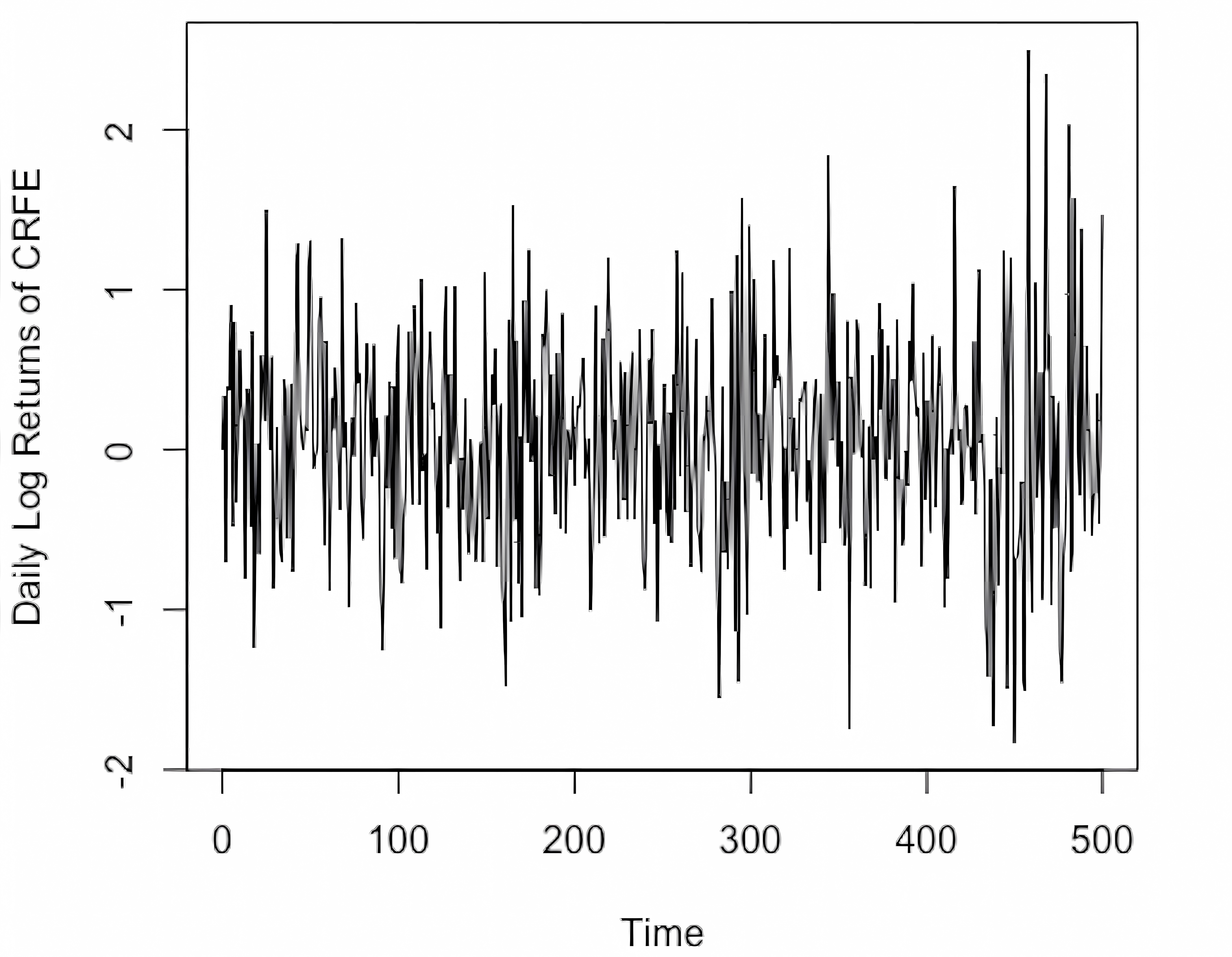}\\
	\end{minipage}}
	\subfloat[Log Returns of IBM]{
	\begin{minipage}[b]{0.45\textwidth}
		\centering
		\label{series_IBM}
		\includegraphics[width=1\textwidth]{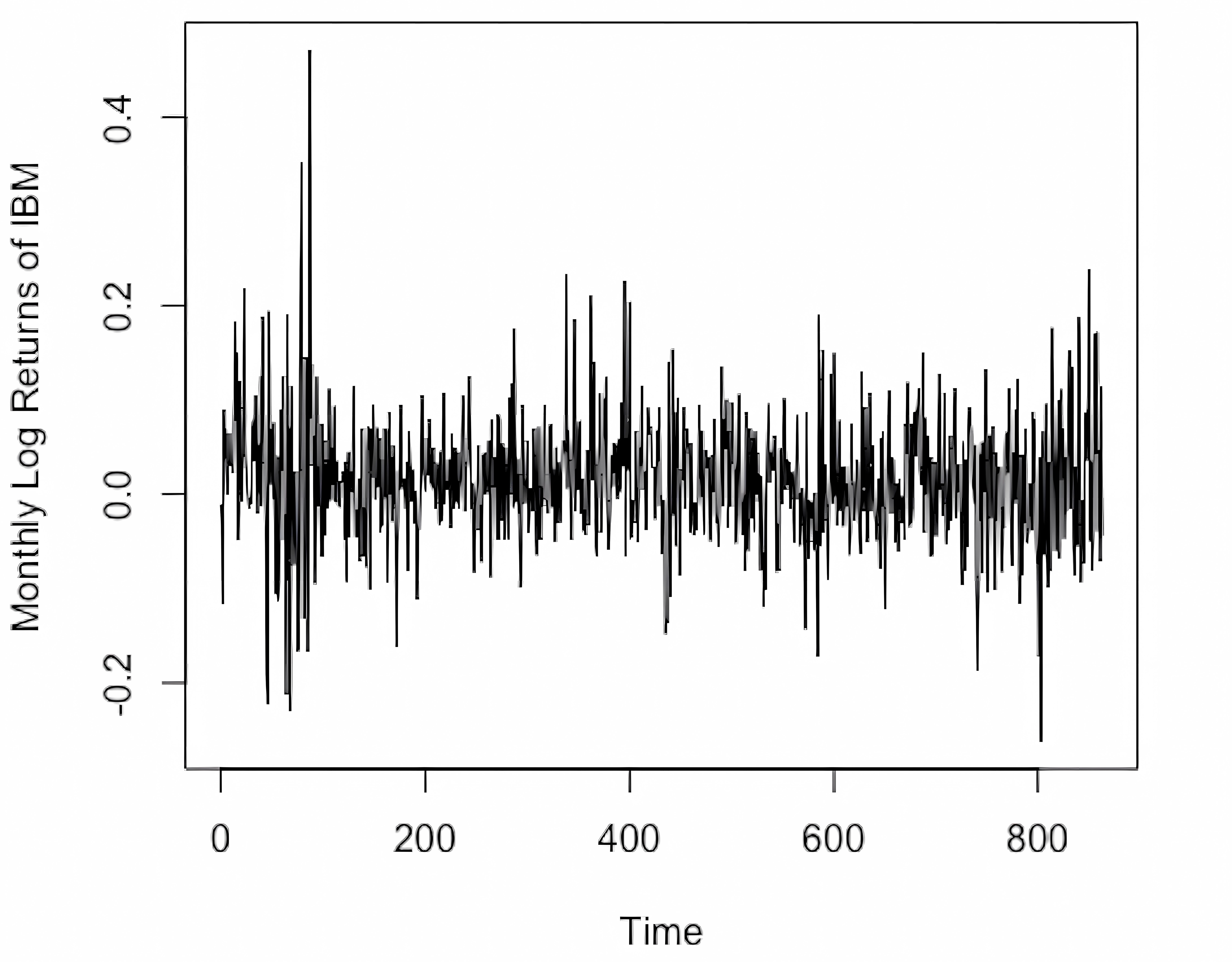}\\
\end{minipage}}\\
	\subfloat[Percentage Changes of Exchange Rate between Mark and Dollar]{
	\begin{minipage}[b]{0.4\textwidth}
		\centering
		\label{series_Exch_Rate}
		\includegraphics[width=1\textwidth]{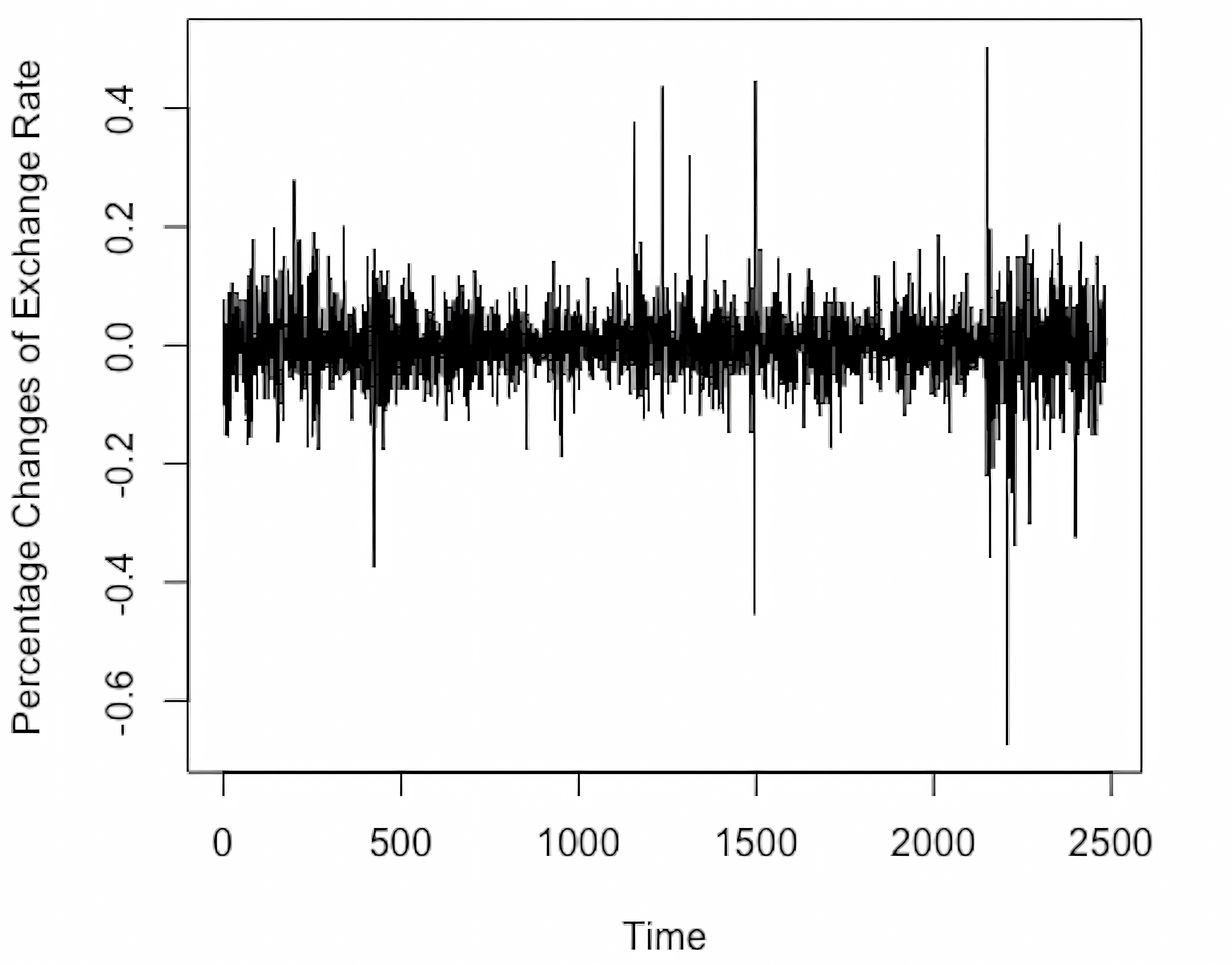}\\
\end{minipage}}
	\caption{Time Plots of Three Databases}
	\label{series}
\end{figure}

\begin{figure}[!]
	\centering
	\subfloat[]{
		\begin{minipage}[b]{0.4\textwidth}
			\centering
			\label{ACF_s_CREF}
			\includegraphics[width=1\textwidth]{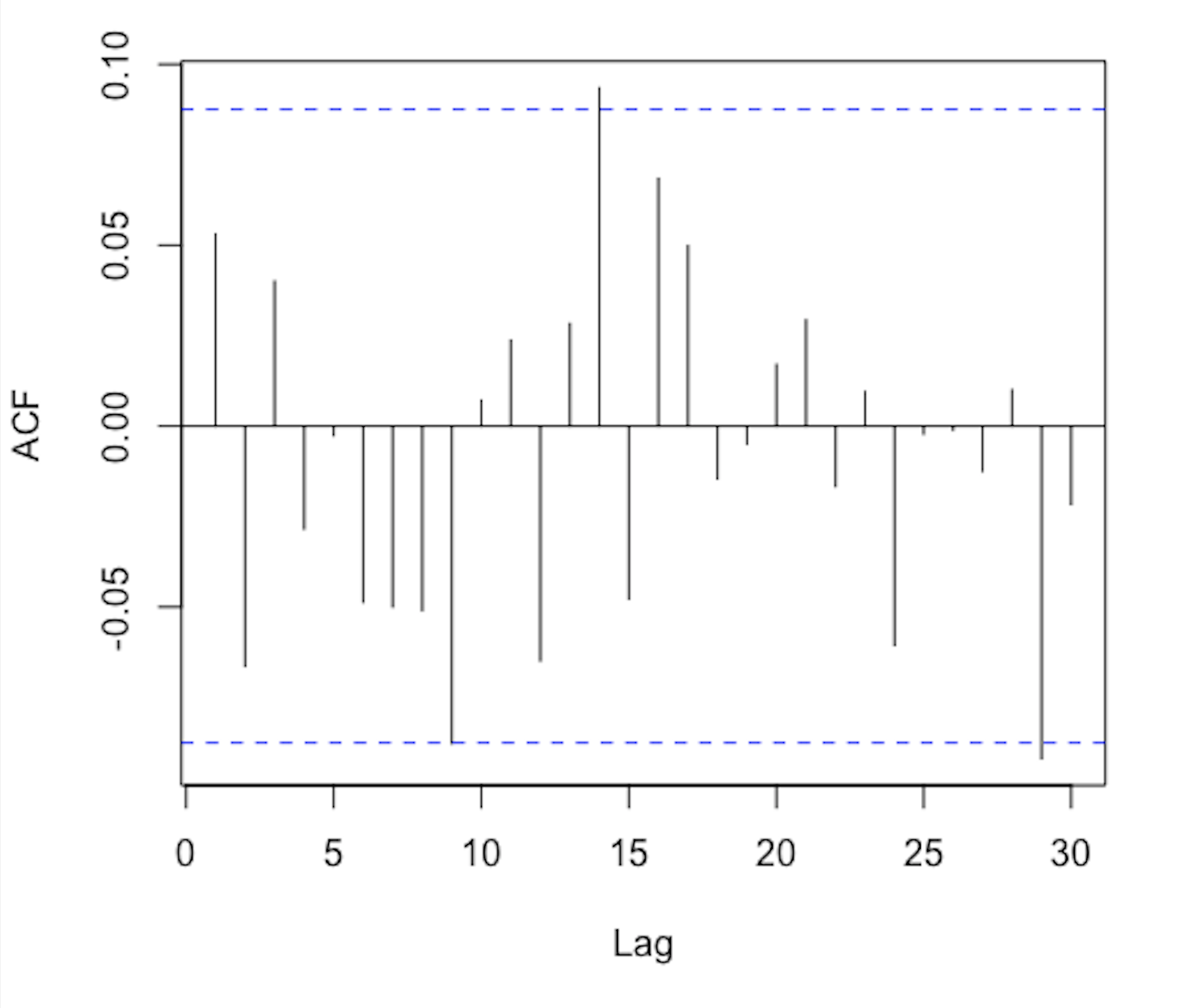}\\
	\end{minipage}}
	\subfloat[]{
		\begin{minipage}[b]{0.4\textwidth}
			\centering
			\label{ACF_s2_CREF}
			\includegraphics[width=1\textwidth]{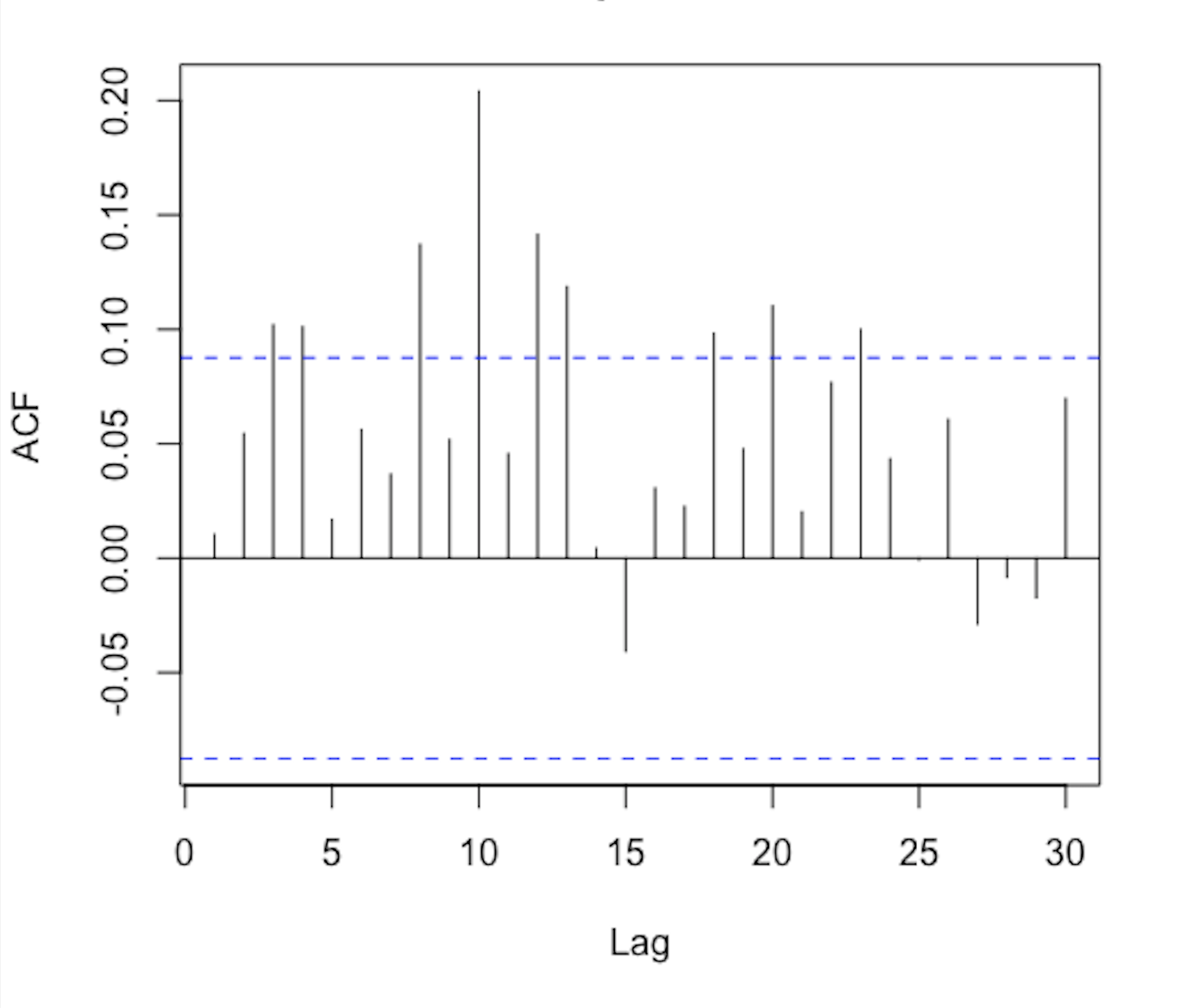}\\
	\end{minipage}}\\
	\subfloat[]{
		\begin{minipage}[b]{0.4\textwidth}
			\centering
			\label{ACF_s_IBM}
			\includegraphics[width=1\textwidth]{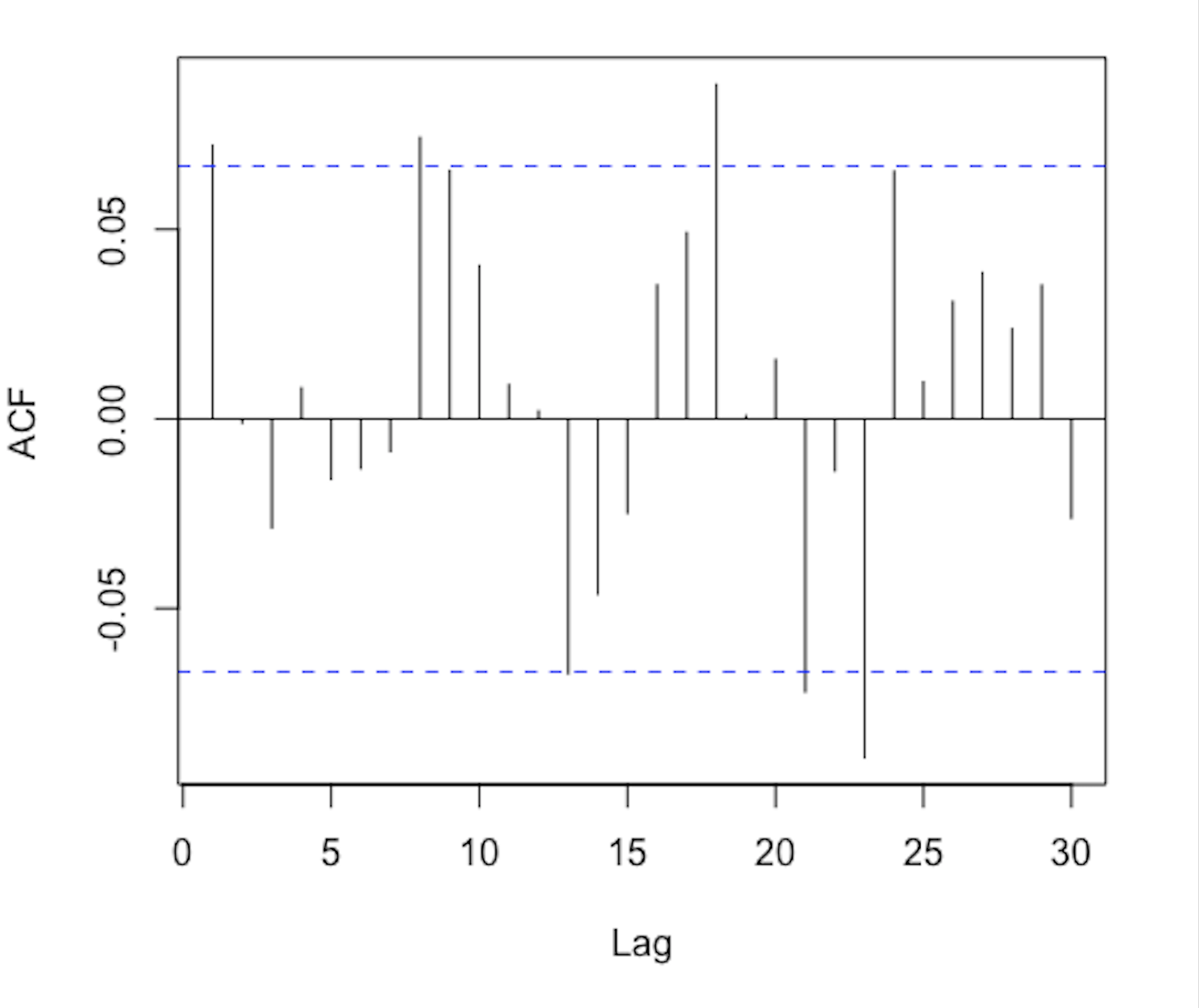}\\
	\end{minipage}}
	\subfloat[]{
	\begin{minipage}[b]{0.4\textwidth}
		\centering
		\label{ACF_s2_IBM}
		\includegraphics[width=1\textwidth]{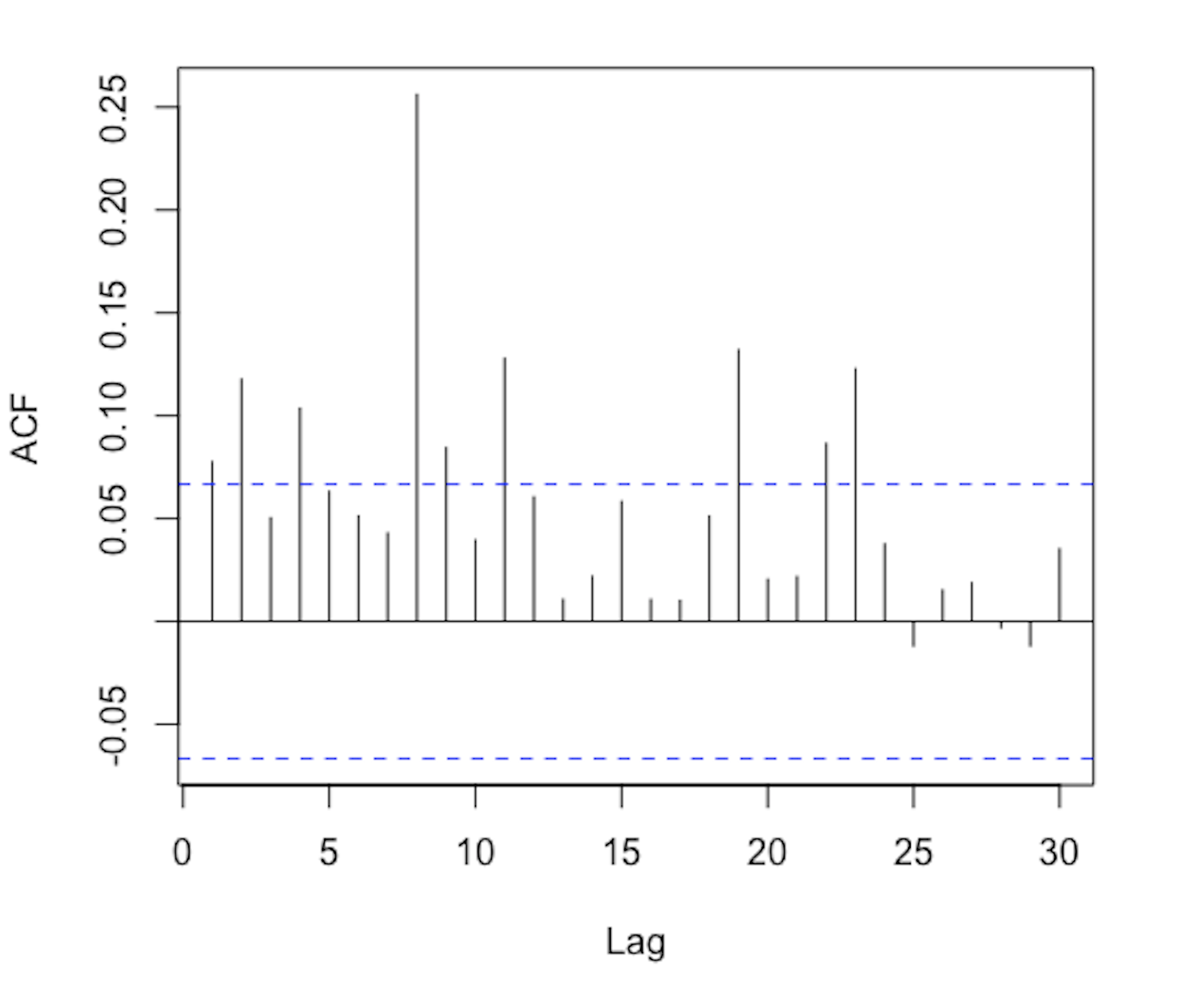}\\
\end{minipage}}\\
	\subfloat[]{
	\begin{minipage}[b]{0.4\textwidth}
		\centering
		\label{ACF_s_exch}
		\includegraphics[width=1\textwidth]{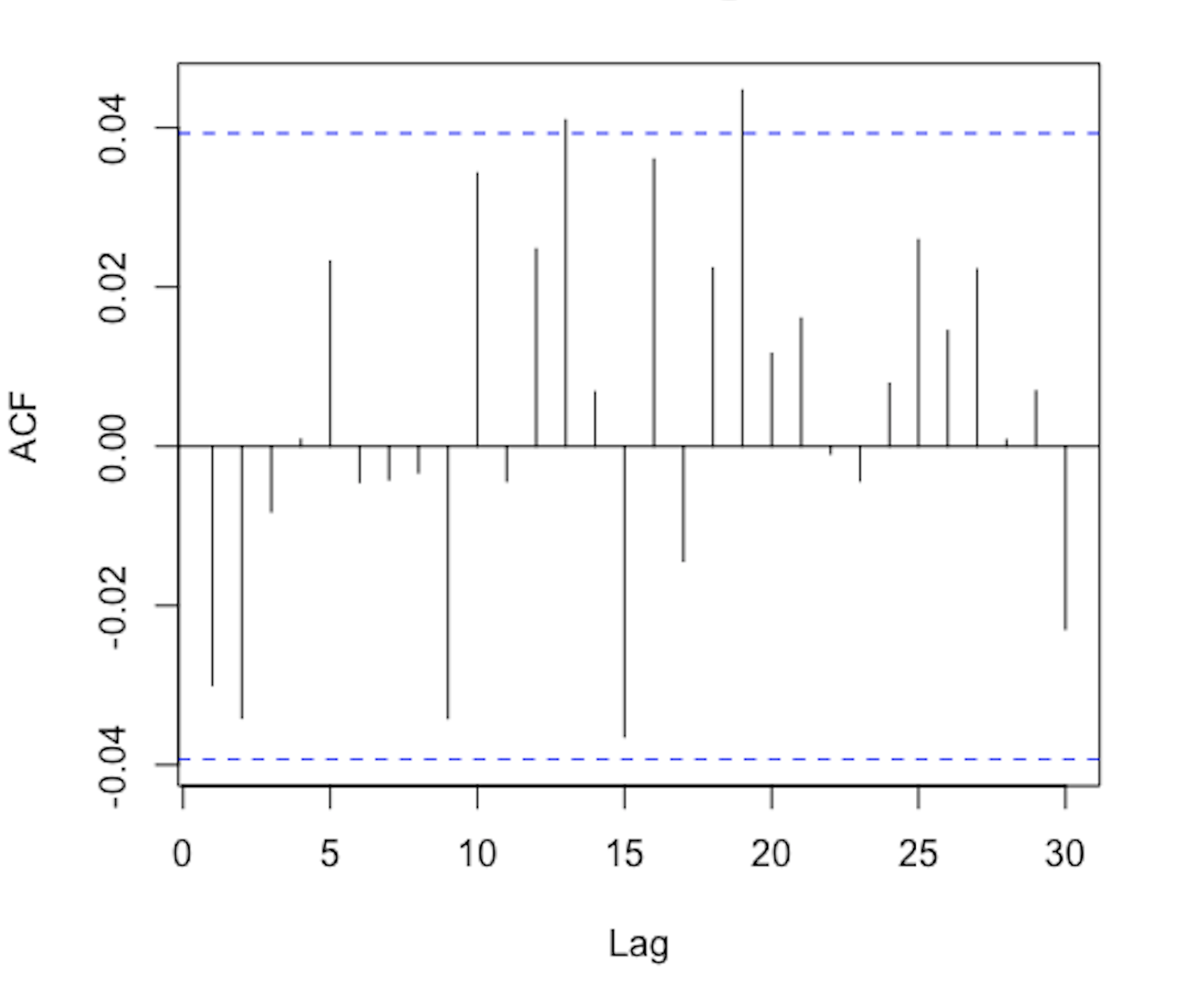}\\
\end{minipage}}
\subfloat[]{
	\begin{minipage}[b]{0.4\textwidth}
		\centering
		\label{ACF_s2_exch}
		\includegraphics[width=1\textwidth]{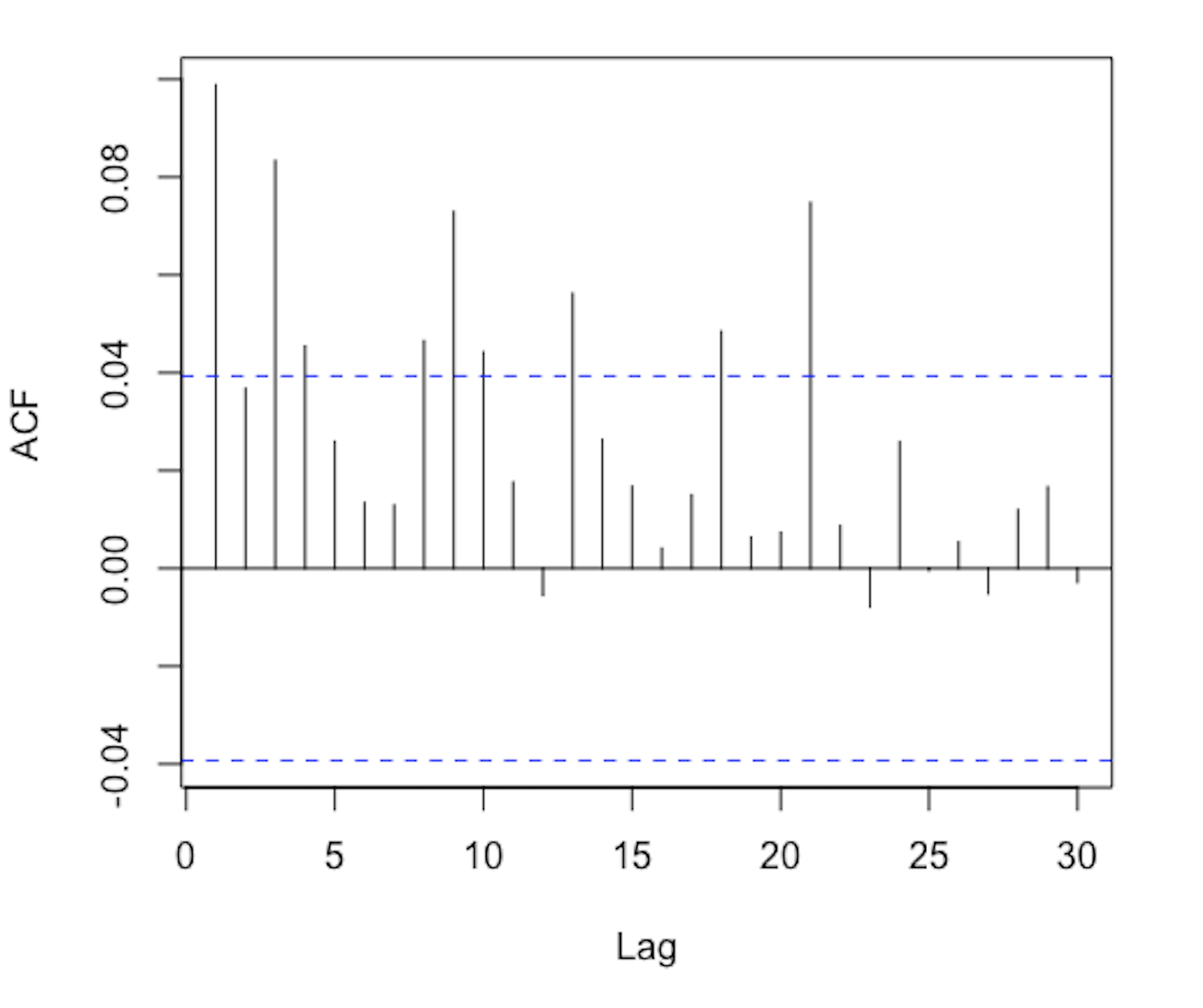}\\
\end{minipage}}
	\caption{Sample ACF and PACF of various functions of three datasets: (a) ACF of the log returns of CREF, (b) ACF of the squared log returns of CREF, (c) ACF of the log returns of IBM, (d) ACF of the squared log returns of IBM, (e) ACF of the exchange rate, (f) ACF of the squared exchange rate. }
	\label{ACF_series}
\end{figure}

According to the  Chapter 12 of Cryper and Chan (2008), the log returns of CREF can be well fitted by the GARCH(1,1) model. And in Chapter 3 of Tsay (2005), it has been demonstrated that the Log Returns of IBM and the Percentage Changes of Exchange Rate between Mark and Dollar can be well fitted by the EGARCH(1,1) and the ARCH(3) respectively.

Since the model specification and the model fitting are not our research subject here, we won't expand and only show the maximum likelihood estimations of parameters contained by GARCH(1,1), EGARCH(1,1) and ARCH(3) in Table \ref{estimation_models}, the ARCH effect test results for the estimate residuals and the squared estimate residuals in Table \ref{ARCH_test}, and the ACF of the estimate residuals and the squared estimate residuals in Figure \ref{ACF_res_2}. According to Table \ref{ARCH_test}, we find that, for three models we considered, the estimate residuals doesn't include ARCH effect any more. Besides, Figure \ref{ACF_res_2} provides the sample ACFs of the standardized residuals and the squared standardized residuals. These ACFs fail to suggest any significant serial correlation of conditional heteroscedasticity in the standardized residual series. Thus, these models appear to be adequate in describing the dependence in the volatility series. 

In the following, we focus on comparing the different performances of BDS test and  RBDS test in evaluating the goodness-of-fit of the above three models and comparing the results with the conclusions given by Cryper and Chan (2008) and Tsay (2005). In Table \ref{BDS_RBDS_GFT}, we presents the values of BDS and RBDS tests when detecting the goodness-of-fit for the above three models. The results show that at 0.05 significant level, the RBDS test performs better than the BDS test in the case where the sample size is hundreds. Specifically, for the daily log returns of CREF containing 500 observations and the monthly log returns of IBM containing 864 observations, RBDS test believes that fitting them with the GARCH(1,1) model and EGARCH(1,1) model respectively is sufficient, which is consistent with the views of Cryper and Chan (2008) and Tsay (2005).  But the BDS test give some opposite conclusions. Besides, it is worth mentioning that the results in Table \ref{BDS_RBDS_GFT} also show that when testing the goodness-of-fit of a model with such sample size, taking the logarithm of the estimate residuals can effectively reduce the parameter estimation impact of the BDS test, thereby making the BDS test give more reasonable results.
Moreover, according to the results in Table \ref{BDS_RBDS_GFT}, both BDS test and RBDS test agree that the ARCH(3) model can fully fit the percentage changes of the exchange rate containing 2488 observations, which is consistent with Tsay (2005). This further verifies that when the sample size is as large as several thousand, the over-rejection problem of BDS test is weakened to disappear and it has comparable performance to the RBDS test.

\begin{table}[]
	\centering
	\caption{Estimated Coefficients of Three Models}
	\label{estimation_models}
	\begin{threeparttable}
		\begin{tabular}{@{}cccccc@{}}
			\toprule[2pt]
			\multicolumn{6}{c}{\begin{tabular}[c]{@{}c@{}}Estimated GARCH(1,1) Model of Daily Log Returns of \\ the CREF: August 2004 - August 2006\end{tabular}} \\ \midrule
			Coefficients & Estimation  & Std         & t-value  & p-value    & Significant Level \\ \midrule
			$a_0$        & $0.01633$   & $0.01237$   & $1.320$  & $0.1869$   &                   \\
			$a_1$        & $0.04414$   & $0.02097$   & $2.105$  & $0.0353$   & *                 \\
			$b_1$        & $0.91704$   & $0.04570$   & $20.066$ & $2e-16$    & ***               \\ \midrule[1.5pt]
			\multicolumn{6}{c}{\begin{tabular}[c]{@{}c@{}}Estimated EGARCH(1,1) Model of Monthly Log Returns of \\ the IBM: January 1926 - December 1997\end{tabular}} \\ \midrule
			Coefficients & Estimation  & Std         & t-value  & p-value    & Significant Level \\ \midrule
			$\mu$        & $0.0128351$ & $0.0021233$ & $6.045$  & $1.49e-09$ & ***               \\
			$\omega$     & $0.0003624$ & $0.0001440$ & $2.517$  & $0.0119$  & *                 \\
			$\alpha_1$   & $0.0895498$ & $0.0275172$ & $3.254$  & $0.0011$  & **                \\
			$\gamma_1$   & $0.2658391$ & $0.1237391$ & $2.148$  & $0.0317$  & *                 \\
			$\beta_1$    & $0.8261261$ & $0.0500977$ & $16.490$ & $<2e-16$   & ***               \\
			$\theta$     & $1.4662640$ & $0.0916414$ & $16.000$ & $<2e-16$   & ***               \\ \midrule[1.5pt]
			\multicolumn{6}{c}{\begin{tabular}[c]{@{}c@{}}Estimated ARCH(3) Model of Percentage Changes of the Exchange Rate \\ between Mark and Dollar in 10-minutes Intervals: June 5, 1989 - June 19, 1989\end{tabular}} \\ \midrule
			Coefficients & Estimation  & Std         & t-value  & p-value    & Significant Level \\ \midrule
			$a_0$        & $2.237e-03$ & $4.587e-05$ & $48.765$ & $<2e-16$   & ***               \\
			$a_1$        & $3.283e-01$ & $1.629e-02$ & $20.146$ & $<2e-16$   & ***               \\
			$a_2$        & $7.301e-02$ & $1.596e-02$ & $4.575$  & $4.75e-06$ & ***               \\
			$a_3$        & $1.026e-01$ & $1.469e-02$ & $6.986$  & $2.82e-12$ & ***               \\ 
 \bottomrule[2pt]
		\end{tabular}%
\begin{tablenotes}
	\footnotesize
\item[-] Notes: 0 '***', 0.001 '**', 0.01 '*', 0.05 '.', 0.1 ' '.
\end{tablenotes}
\end{threeparttable}
\end{table}

\begin{table}[]
	\centering
	\caption{ARCH Effect Test Results of the Estimate Residuals of 
		the Three Models}
	\label{ARCH_test}
	\begin{threeparttable}
	\begin{tabular}{@{}cccc@{}}
		\toprule[1pt]
		& GARCH(1,1)        & EGARCH(1,1)      & ARCH(3)           \\ \midrule
		$\hat{\epsilon}_t$   & $13.465(0.3362)$  & $10.664(0.5579)$ & $15.740(0.2035)$   \\
		$\hat{\epsilon}^2_t$ & $21.872(0.0390)$ & $10.148(0.6029)$ & $0.299(0.9998)$ \\  \bottomrule[1pt]
	\end{tabular}
\begin{tablenotes}
	\footnotesize
	\item[-] Shown in the table are the values of the LM test ($df=12$), with the corresponding P-values in parentheses.
	\item[-] $\hat{\epsilon}_t$ is the estimation residuals of the model. $\hat{\epsilon}^2_t$ is the square of $\hat{\epsilon}_t$.
\end{tablenotes}
\end{threeparttable}
\end{table}

\begin{figure}[!]
	\centering
	\subfloat[]{
	\begin{minipage}[b]{0.4\textwidth}
		\centering
		\label{ACF_res_GARCH}
		\includegraphics[width=1\textwidth]{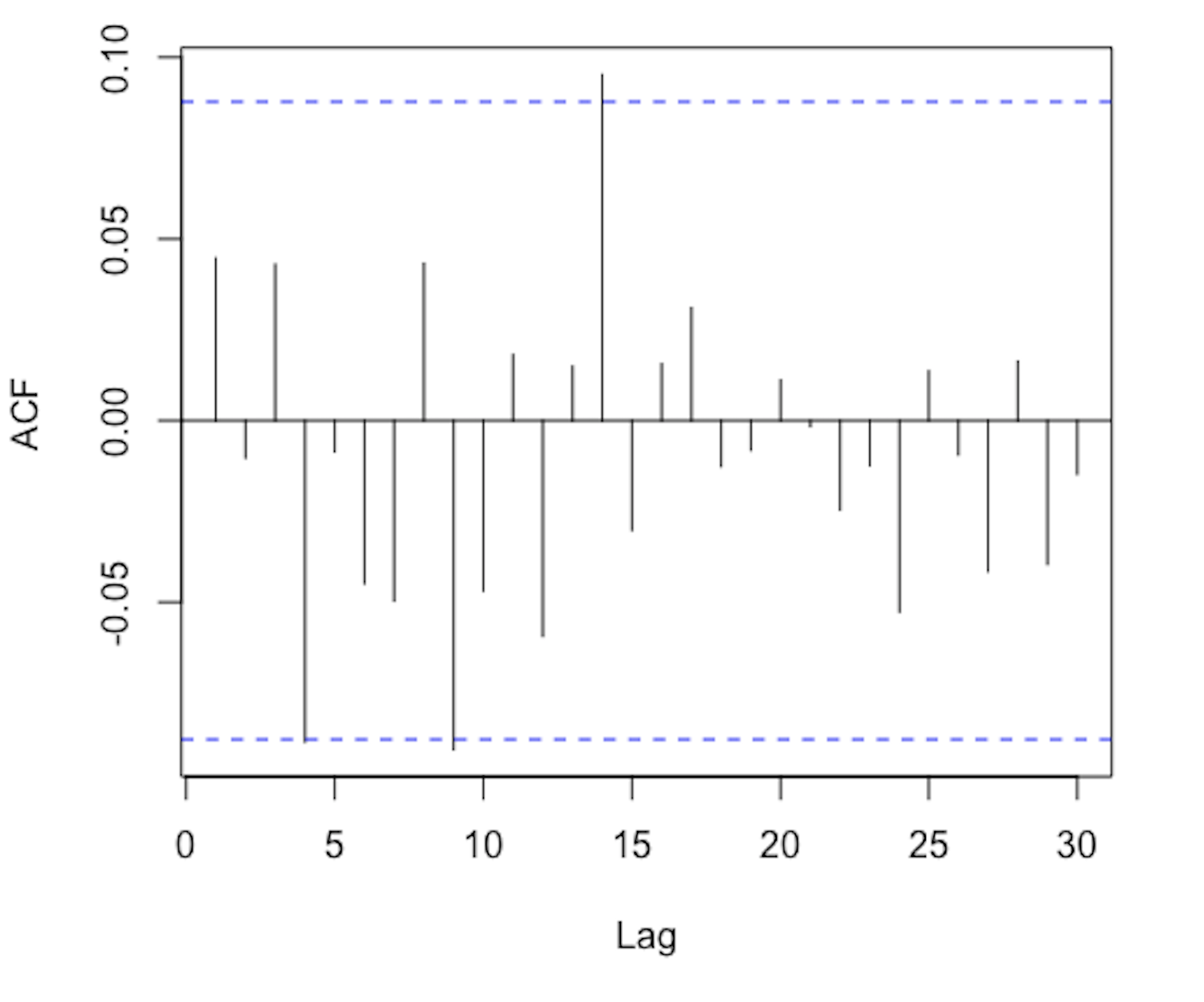}\\
\end{minipage}}
\subfloat[]{
	\begin{minipage}[b]{0.4\textwidth}
		\centering
		\label{ACF_res2_GARCH}
		\includegraphics[width=1\textwidth]{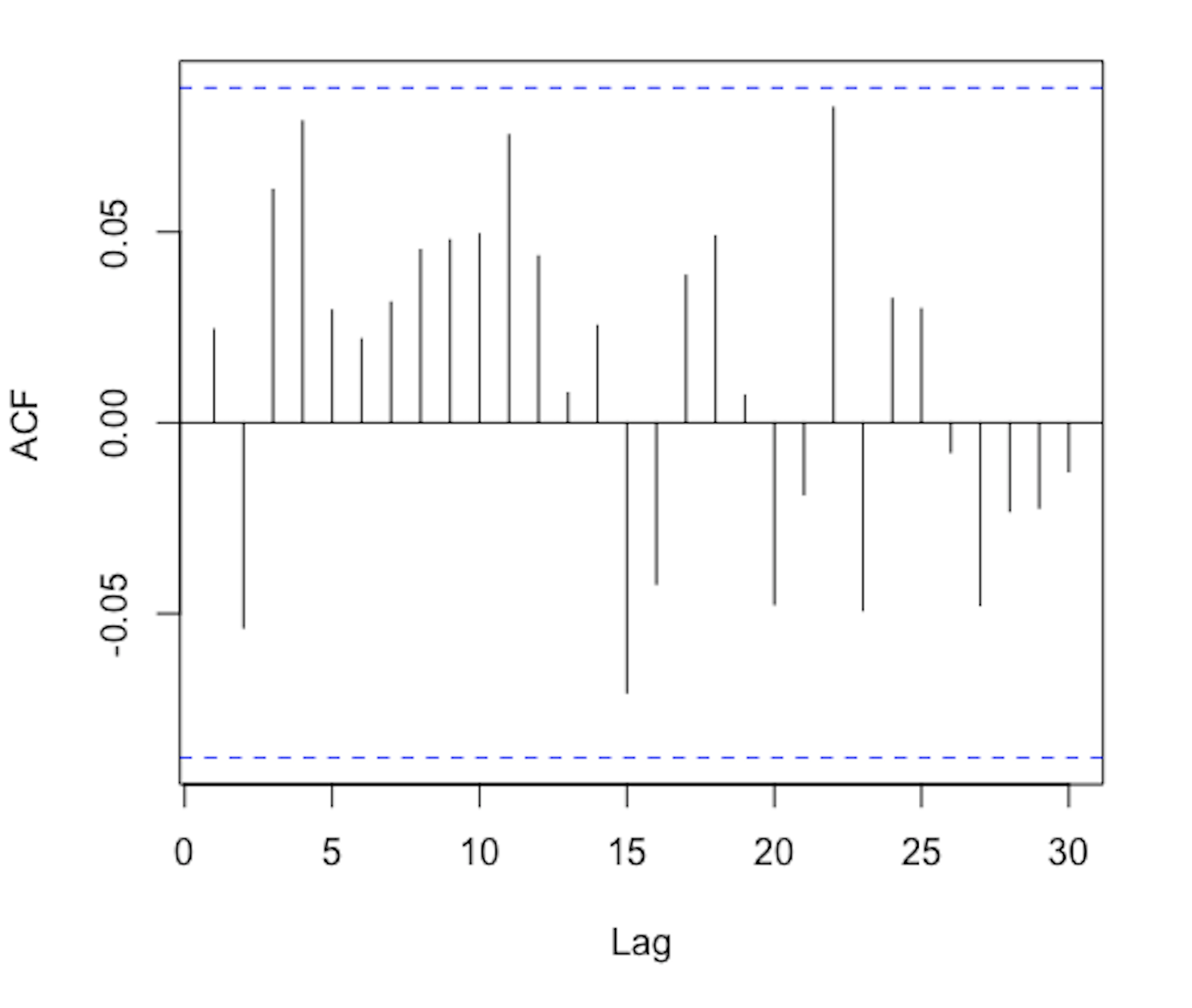}\\
\end{minipage}}\\
\subfloat[]{
	\begin{minipage}[b]{0.4\textwidth}
		\centering
		\label{ACF_res_EGARCH}
		\includegraphics[width=1\textwidth]{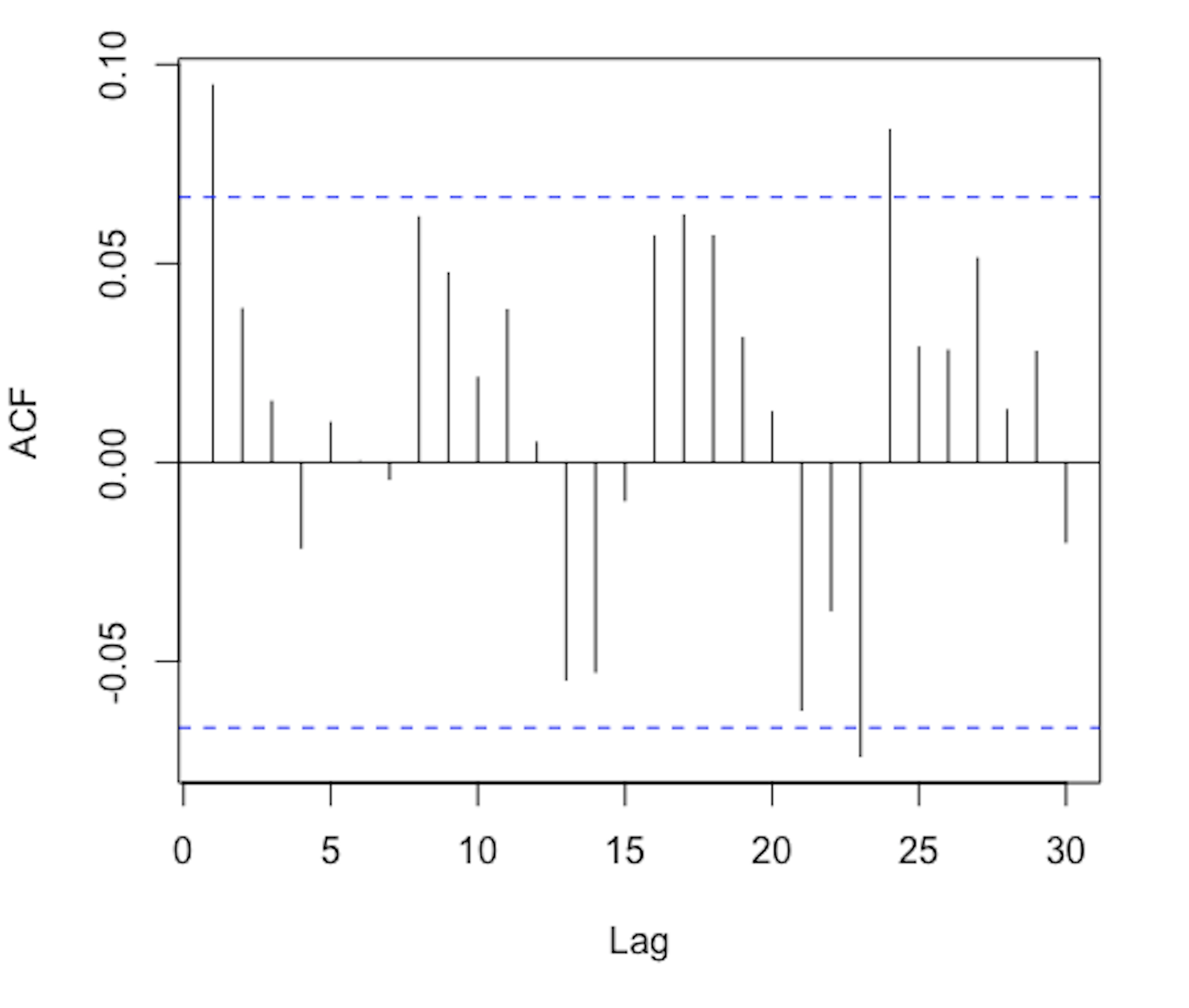}\\
\end{minipage}}
\subfloat[]{
	\begin{minipage}[b]{0.4\textwidth}
		\centering
		\label{ACF_res2_EGARCH}
		\includegraphics[width=1\textwidth]{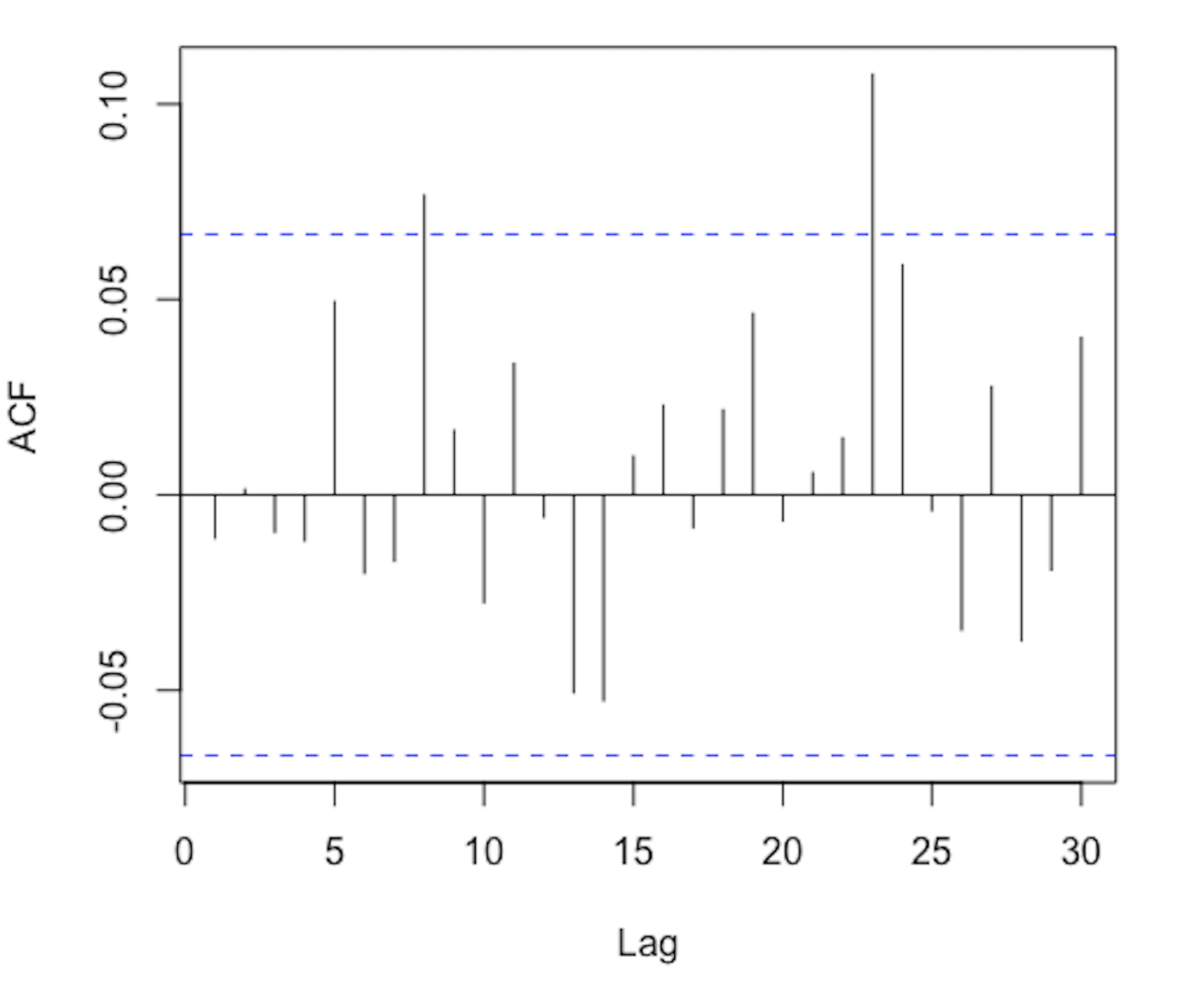}\\
\end{minipage}}\\
\subfloat[]{
	\begin{minipage}[b]{0.4\textwidth}
		\centering
		\label{ACF_res_ARCH}
		\includegraphics[width=1\textwidth]{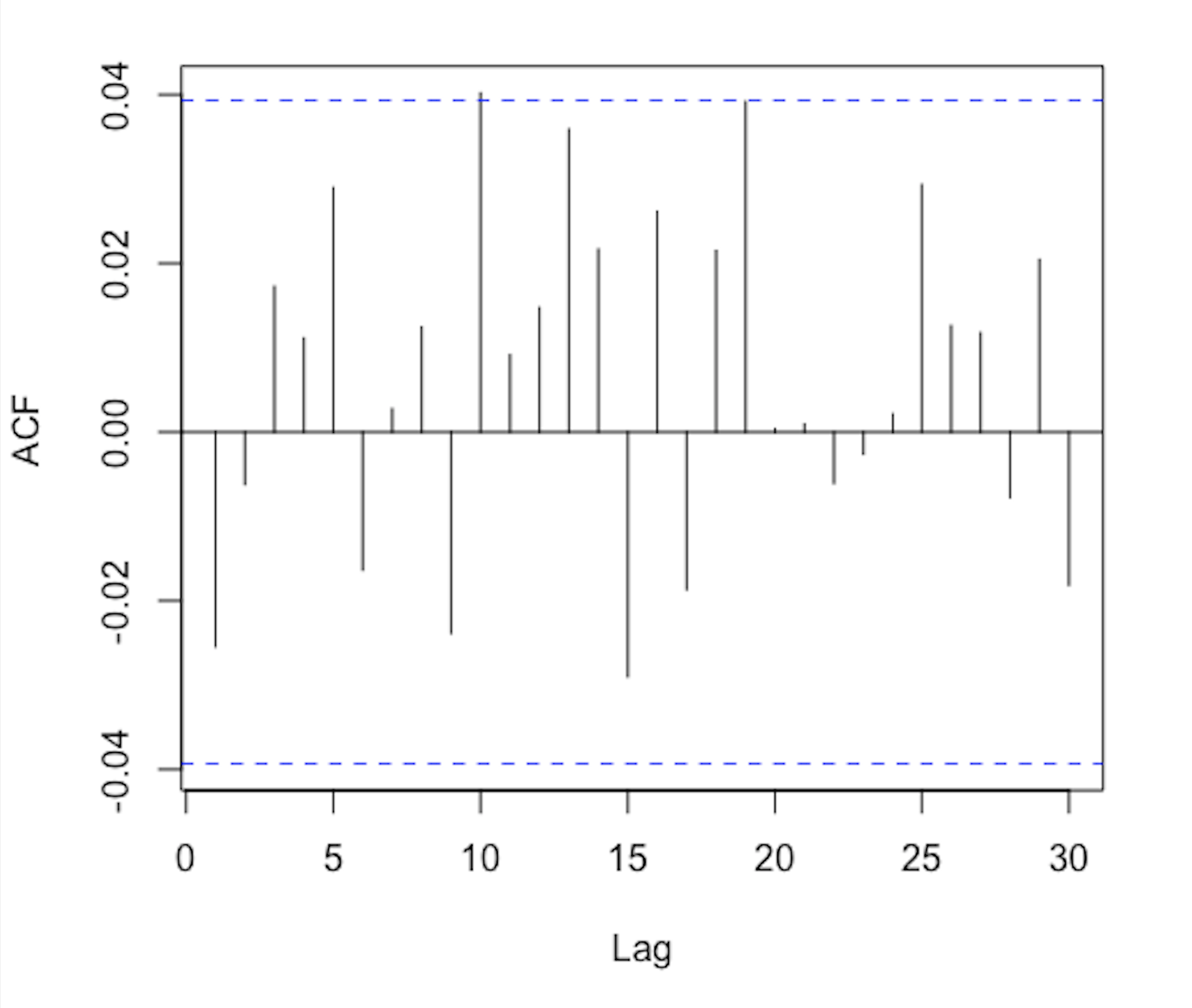}\\
\end{minipage}}
\subfloat[]{
	\begin{minipage}[b]{0.4\textwidth}
		\centering
		\label{ACF_res2_ARCH}
		\includegraphics[width=1\textwidth]{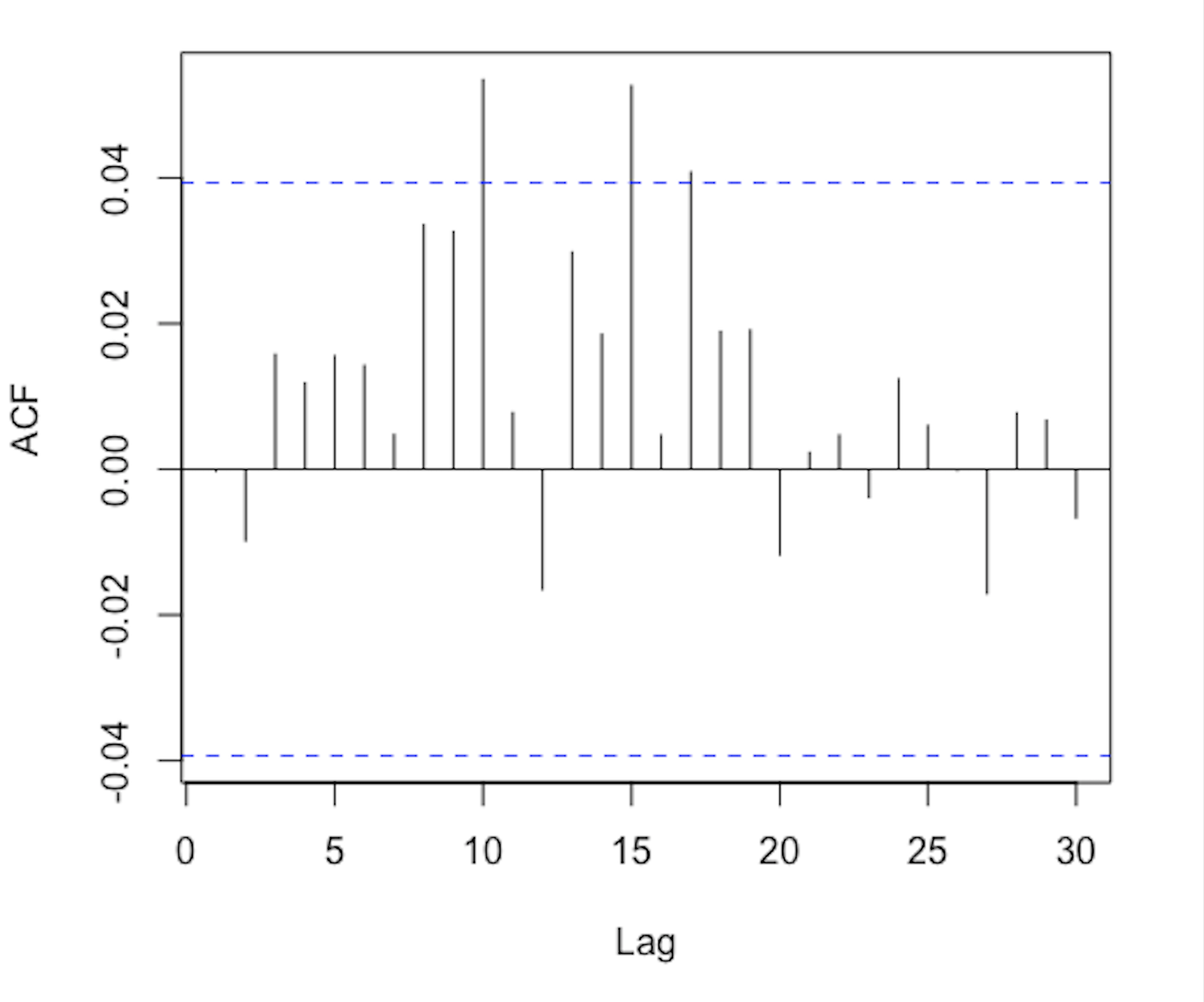}\\
\end{minipage}}
	\caption{Model checking of GARCH(1,1), EGARCH(1,1) and ARCH(3): (a) ACF of standardized residuals of GARCH(1,1), (b) ACF of squared standardized residuals of GARCH(1,1), (c) ACF of standardized residuals of EGARCH(1,1), (d) ACF of standardized squared residuals of EGARCH(1,1), (e) ACF of standardized residuals of ARCH(3), (f) ACF of standardized squared residuals of ARCH(3).}
\label{ACF_res_2}
\end{figure}

\begin{table}[]
	\centering
	\caption{The Values of BDS Test and RBDS Test for Evaluating the Goodness-of-Fit of Three Models}
	\label{BDS_RBDS_GFT}
	\begin{threeparttable}
	\begin{tabular}{@{}cc|ccc|cc@{}}
		\toprule[1.5pt]
		\multicolumn{3}{c}{\multirow{2}{*}{}} & \multicolumn{2}{c|}{$\hat{\epsilon}_t$} & \multicolumn{2}{c}{$\log(\hat{\epsilon}_t^2)$} \\ \cmidrule(l){4-7} 
		\multicolumn{3}{c}{}                 & BDS       & RBDS     & BDS       & RBDS      \\
		\cmidrule(l){4-7}
		\multirow{2}{*}{GARCH(1,1)} &\multirow{2}{*}{$T=500$} & $m=2$ & $2.3009$  & $1.7632$ & $-1.1988$ & $-0.9190$ \\
		& & $m=3$ & $2.0807$  & $1.1576$ & $-1.3627$ & $-1.0441$ \\ \midrule[1pt]
		\multirow{2}{*}{EGARCH(1,1)} 
	 &\multirow{2}{*}{$T=864$}& $m=2$ & $1.7990$  & $1.4461$ & $-0.6990$ & $-0.6395$ \\
		& & $m=3$ & $2.0186$  & $1.5901$ & $-0.9484$ & $-0.8610$ \\ \midrule[1pt]
		\multirow{2}{*}{ARCH(3)}  &\multirow{2}{*}{$T=2488$}   & $m=2$ & $-0.4219$ & $0.4160$ & $-$       & $-$       \\
		& & $m=3$ & $0.7423$  & $0.7060$ & $-$       & $-$       \\ \bottomrule[1.5pt]
	\end{tabular}
\begin{tablenotes}
	\footnotesize
	\item[-] $T$ is the length of the sequence $\{\hat{\epsilon}_t\}$.
	\item[-] $\hat{\epsilon}_t$ is the standardized estimate residuals of the specified model, $\log(\hat{\epsilon}_t^2)$ is logarithm of the squared $\hat{\epsilon}_t$.
\end{tablenotes}
\end{threeparttable}
\end{table}

\section{Conclusions}

In this paper, we are committed to improving the over-rejection problem of BDS test. Since BDS test originated from the correlation integral $C_{m,T}(\epsilon)$ in the chaos theory, we found that treating $C_{m,T}(\epsilon)$ as a U-statistic to obtain the BDS test is the inducement for the over-rejection problem. We give the exact expectation of the correlation integral $C_{m,T}(\epsilon)$ and recalculate the asymptotic variance of $C_{m,T}(\epsilon)$. And based on the modified asymptotic theory of the correlation integral, a revised BDS (RBDS) test was proposed. The RBDS test is still a nonparametric test for independence, also comes from the correlation integral. Therefore, it has the same advantages as the BDS test. What's more, from the results of the simulation experiments we designed, the RBDS test effectively eliminates the over-rejection problem of BDS test. Similar to BDS test, RBDS test is also affected by the estimation of model parameters, resulting in size distortion, which can be alleviated by logarithmic transformation preprocessing of the estimate residuals of the model. Besides, empirical analysis shows that RBDS test is more reliable in evaluating the goodness-of-fit of the model, especially when sample size is hundreds, and BDS test is more likely to cause overfitting of the model. Therefore, we suggest that, in practice, multiple methods should be combined in evaluating the goodness-of-fit for the model to prevent misjudgment caused by using a single method.

\section{Acknowledgments}

We thank two anonymous referees for helpful comments and suggestions and Z.D. Bai for theoretical guidance about this topic. Funding for this work was provided by National Science Foundation of China (Grant no. 12271536, 12171198), First Class Discipline of Zhejiang-A (Zhejiang University of Finance and Economics - Statistics) (Grant no. 10344921011/003).

\section{Appendix A}
\begin{proof}[Proof of Theorem \ref{ThCLTCI}:]
	
	Since the correlation integral $C_{m,T}(\epsilon)$ can be decomposed according to \eqref{DivitionCI}, the expectation of $C_{m,T}(\epsilon)$ can be obtained by calculating the expectation of $\breve{C}_{m,T}(\epsilon)$ and $\widetilde{C}_{m,T}(\epsilon)$ respectively. For $\widetilde{C}_{m,T}(\epsilon)$, when $\{u_{t}\}$ is a sequence of i.i.d. random variables, its expectation can be easily  figured out
	\begin{equation}\label{EtildeCI}
		\mathbf{E}\widetilde{C}_{m,T}(\epsilon)=\left(\omega_{1}^0\right)^m=W_m(m,0).
	\end{equation}
	And for the other part, after some routine calculation, we get
	\begin{equation}\label{EbreveCI}
		\mathbf{E}\breve{C}_{m,T}(\epsilon)= \frac{1}{N}\sum\limits_{k=1}^{m-1}(T_{m}-k) \left(\omega_{h}^0\right)^{k-i}\left(\omega_{h+1}^0\right)^{i}=\frac{1}{N}\sum_{k=1}^{m-1}(T_{m}-k)W_m(k,0),
	\end{equation}
	where $h=\left\lfloor{\frac{m}{k}}\right\rfloor$, $i=m-hk$.
	Further, we get \eqref{ECI}.
	
	Next, the variance of $C_{m,T}(\epsilon)$ is our focus. Also according to the decomposition in \eqref{DivitionCI}, we have:
	\begin{equation}\label{VarCI}
		\begin{aligned}
			\mathbf{Var}(C_{m,T}(\epsilon))&=\mathbf{E}\left[C_{m,T}(\epsilon)-\mathbf{E}C_{m,T}(\epsilon)\right]^2\\ &=\mathbf{E}\left[\breve{C}_{m,T}(\epsilon)-\mathbf{E}\breve{C}_{m,T}(\epsilon) +\frac{N_{0}}{N}\left(\widetilde{C}_{m,T}(\epsilon)-\mathbf{E}\widetilde{C}_{m,T}(\epsilon) \right)\right]^2\\
			&=\uppercase\expandafter{\romannumeral1}_{m,m}  +2\left(\frac{N_{0}}{N}\right)\uppercase\expandafter{\romannumeral2}_{m,m} +\left(\frac{N_{0}}{N}\right)^2\uppercase\expandafter{\romannumeral3}_{m,m},
		\end{aligned}
	\end{equation}
	where
	\begin{eqnarray}\label{VarCII}
		\uppercase\expandafter{\romannumeral1}_{m,m}&=& \mathbf{E}\left[\breve{C}_{m,T}(\epsilon) -\mathbf{E}\breve{C}_{m,T}(\epsilon)\right]^2,\qquad \qquad \qquad \qquad \qquad \qquad \qquad \qquad \qquad
	\end{eqnarray}
	\begin{eqnarray}\label{VarCIII}
		\uppercase\expandafter{\romannumeral2}_{m,m}&=&\mathbf{E} \left[\breve{C}_{m,T}(\epsilon)-\mathbf{E}\breve{C}_{m,T}(\epsilon)\right] \left[\widetilde{C}_{m,T}(\epsilon)-\mathbf{E}\widetilde{C}_{m,T}(\epsilon)\right],\qquad \qquad \qquad \qquad
	\end{eqnarray}
	\begin{eqnarray}\label{VarCIIII}
		\uppercase\expandafter{\romannumeral3}_{m,m} &=&\mathbf{E}\left[\widetilde{C}_{m,T}(\epsilon)-\mathbf{E}\widetilde{C}_{m,T}(\epsilon)\right]^2. \qquad \qquad \qquad \qquad \qquad \qquad \qquad \qquad \quad
	\end{eqnarray}
	Among them, $\uppercase\expandafter{\romannumeral3}_{m,m}= \widetilde{\sigma}_{m}^2+O(T^{-3})$, where $\widetilde{\sigma}_{m}^2$ is equal to the variance part of Theorem 2.1. in Luo et al.(2020). Here we only need to care about the results of $\uppercase\expandafter{\romannumeral1}_{m,m}$ and $\uppercase\expandafter{\romannumeral2}_{m,m}$ respectively.
	
	For $\uppercase\expandafter{\romannumeral1}_{m,m}$, according to \eqref{breveCI} and \eqref{EbreveCI}, we have \begin{equation}\label{VarCIIC}
		\begin{aligned}
			\uppercase\expandafter{\romannumeral1}_{m,m}&= \mathbf{E}\left[\breve{C}_{m,T}(\epsilon) -\mathbf{E}\breve{C}_{m,T}(\epsilon)\right]^2\\
			&=\frac{1}{N^2}\mathbf{E}\left(  \sum_{k=1}^{m-1}\sum_{t=1}^{T_{m}-k}\left[I_{\epsilon}(\|Y_{t}^m-Y_{t+k}^m\|) -W_m(k,0)\right]\right)^2\\
			&=\frac{1}{N^2}\mathbf{E}\left(\sum_{k_{1}=1}^{m-1}\sum_{t_{1}=1}^{T_{m}-k_1} \left[I_{\epsilon}(\|Y_{t_{1}}^m-Y_{t_{1}+k_{1}}^m\|) -W_m(k_1,0)\right]\right)\\ &\quad \quad \quad \times{\left(\sum_{k_{2}=1}^{m-1}\sum_{t_{2}=1}^{T_{m}-k_2} \left[I_{\epsilon}(\|Y_{t_{2}}^m-Y_{t_{2}+k_{2}}^m\|) -W_m(k_2,0)\right]\right)}\\
			&=\frac{1}{N^2}\sum_{k_{1}=1}^{m-1}\sum_{k_{2}=1}^{m-1}\psi(k_{1},k_{2}),
		\end{aligned}
	\end{equation}
	where
	\begin{eqnarray}\label{psi}
		\begin{aligned}
			\psi(k_{1},k_{2})&=\sum_{t_{1}=1}^{T_{m}-k_1} \sum_{t_{2}=1}^{T_{m}-k_2} \left[\mathbf{E}I_{\epsilon}(\|Y_{t_{1}}^m-Y_{t_{1}+k_{1}}^m\|) I_{\epsilon}(\|Y_{t_{2}}^m-Y_{t_{2}+k_{2}}^m\|)-W_m(k_1,0)W_m(k_2,0)
			\right],
		\end{aligned}
	\end{eqnarray}
	Easy to understand that  $\uppercase\expandafter{\romannumeral1}_{m,m}$ is determined by $\mathbf{E}I_{\epsilon}(\|Y_{t_1}^m-Y_{t_1+k}^m\|)I_{\epsilon}(\|Y_{t_2}^m-Y_{t_2+k_2}^m\|)$, and $\mathbf{E}I_{\epsilon}(\|Y_{t_1}^m-Y_{t_1+k}^m\|)I_{\epsilon}(\|Y_{t_2}^m-Y_{t_2+k_2}^m\|)$ is closely related to the dependence between $I_{\epsilon}(\|Y_{t_{1}}^m-Y_{t_{1}+k_{1}}^m\|)$ and $I_{\epsilon}(\|Y_{t_{2}}^m-Y_{t_{2}+k_{2}}^m\|)$. Since $\|\cdot \|$ represents the maximum norm, recalling the definition of $Y_l^m$ and \eqref{indicator} we have
	\begin{equation}\label{Yu}
		\begin{aligned}
			&I_{\epsilon}(\|Y_{t_{1}}^m-Y_{t_{1}+k_{1}}^m\|)= \prod_{\rho=0}^{m-1} I_{\epsilon}(|u_{t_{1}+\rho}-u_{t_{1}+k_{1}+\rho}|),\\ &I_{\epsilon}(\|Y_{t_{2}}^m-Y_{t_{2}+k_{2}}^m\|)= \prod_{\rho=0}^{m-1} I_{\epsilon}(|u_{t_{2}+\rho}-u_{t_{2}+k_{2}+\rho}|).
		\end{aligned}
	\end{equation}
	For fixed $k_{1}$ and $k_{2}$, all random variables involved in $I_{\epsilon}(\|Y_{t_{1}}^m-Y_{t_{1}+k_{1}}^m\|)$
	form the set $$\{u_{t_1},u_{t_1+1},\cdots,u_{t_1+k_1},\cdots,u_{t_1+k_1+m-1}\},$$
	and all random variables involved in $I_{\epsilon}(\|Y_{t_{2}}^m-Y_{t_{2}+k_{2}}^m\|)$ form the set
	$$\{u_{t_2},u_{t_2+1},\cdots,u_{t_2+k_2},\cdots,u_{t_2+k_2+m-1}\}.$$
	We correspond these two sets to the following two sets respectively:
	\begin{equation}\label{At1k1At2k2}
		\begin{aligned}
			&A_{t_{1},k_{1}}=\left\{t_{1},t_{1}+1,\cdots,t_{1}+k_{1}, \cdots, t_{1}+k_{1}+m-1\right\},\\
			&A_{t_{2},k_{2}}=\left\{t_{2},t_{2}+1,\cdots,t_{2}+k_{2}, \cdots, t_{2}+k_{2}+m-1\right\}.
		\end{aligned}
	\end{equation}
	So $I_{\epsilon}(\|Y_{t_{1}}^m-Y_{t_{1}+k_{1}}^m\|)$ and  $I_{\epsilon}(\|Y_{t_{2}}^m-Y_{t_{2}+k_{2}}^m\|)$ are independent if and only if $A_{t_{1},k_{1}}\cap{A_{t_{2},k_{2}}}={\emptyset}.$
	Further, we divide $\psi(k_1,k_2)$ into two parts as follows:
	\begin{equation}\label{Divitionpsi}
		\psi(k_{1},k_{2})=\psi_{1}(k_{1},k_{2})+\psi_{2}(k_{1},k_{2}),
	\end{equation}
	where
	\begin{equation*}
		\begin{aligned}
			\psi_{1}(k_{1},k_{2})&= \sum_{A_{t_{1},k_{1}}\cap{A_{t_{2},k_{2}}}={\emptyset}} \left[\mathbf{E} I_{\epsilon}(\|Y_{t_{1}}^m-Y_{t_{1}+k_{1}}^m\|) I_{\epsilon}(\|Y_{t_{2}}^m-Y_{t_{2}+k_{2}}^m\|)  -W_m(k_1,0)W_m(k_2,0)\right],
		\end{aligned}
	\end{equation*}
	\begin{equation*}
		\begin{aligned}
			\psi_{2}(k_{1},k_{2}) &=\sum_{A_{t_{1},k_{1}}\cap{A_{t_{2},k_{2}}}\neq{\emptyset}} \left[\mathbf{E} I_{\epsilon}(\|Y_{t_{1}}^m-Y_{t_{1}+k_{1}}^m\|) I_{\epsilon}(\|Y_{t_{2}}^m-Y_{t_{2}+k_{2}}^m\|)-W_m(k_1,0)W_m(k_2,0)\right].
		\end{aligned}
	\end{equation*}
	It's easy to obtain that 
	\begin{equation}\label{psi1}
		\psi_{1}(k_{1},k_{2})=0.
	\end{equation}
	As for $\psi_2(k_1,k_2)$, since $A_{t_{1},k_{1}}\cap{A_{t_{2},k_{2}}}\neq{\emptyset}$, $t_{1}$ and  $t_{2}$ are restricted by each other, so there are $O(T)$ terms involved in $\psi_{2}(k_{1},k_{2})$, that is,
	\begin{equation}\label{psi2}
		\psi_{2}(k_{1},k_{2})=O(T).
	\end{equation}
	By \eqref{VarCIIC}, \eqref{Divitionpsi}, \eqref{psi1} and \eqref{psi2}, we have
	\begin{equation}\label{RVarCII}
		\begin{aligned}
			&\uppercase\expandafter{\romannumeral1}_{m,m}=\frac{1}{N^2} \sum_{k_{1}=1}^{m-1} \sum_{k_{2}=1}^{m-1}\left[\psi_{1}(k_{1},k_{2}) +\psi_{2}(k_{1},k_{2})\right]=O(T^{-3}).
		\end{aligned}
	\end{equation}
	
	From now on, we introduce the calculation of $\uppercase\expandafter{\romannumeral2}_{m,m}$, which can be rewritten as
	\begin{equation}\label{RVarCIIIPhi}
		\begin{aligned}
			\uppercase\expandafter{\romannumeral2}_{m,m}&=\mathbf{E}\left[ \breve{C}_{m,T}(\epsilon)-\mathbf{E}\breve{C}_{m,T}(\epsilon)\right] \left[\widetilde{C}_{m,T}(\epsilon)-\mathbf{E}\widetilde{C}_{m,T}(\epsilon)\right]\\
			&=\frac{1}{NN_{0}}\mathbf{E}\left(\sum_{k=1}^{m-1}\sum_{t_{1}=1}^{T_{m}-k} \left[I_{\epsilon}(\|Y_{t_{1}}^m-Y_{t_{1}+k}^m\|)-W_m(k,0)\right]\right)\\
			&\quad \quad \qquad \times{ \left(\sum_{t_{2}=1}^{T_{m}-m}\sum_{s_{2}=t_{2}+m}^{T_{m}}\left[ I_{\epsilon}(\|Y_{t_{2}}^m-Y_{s_{2}}^m\|) -W_m(m,0)\right]\right)}\\
			&=\frac{1}{NN_{0}}\sum_{k=1}^{m-1}\Phi(k),
		\end{aligned}
	\end{equation}
	where
	\begin{equation*}
		\begin{aligned}
			\Phi(k)&=\sum_{t_{1}=1}^{T_{m}-k}\sum_{t_{2}=1}^{T_{m}-m} \sum_{s_{2}=t_{2}+m}^{T_{m}} \left[\mathbf{E}I_{\epsilon}\left(\|Y_{t_{1}}^m-Y_{t_{1}+k}^m\|\right) I_{\epsilon}(\|Y_{t_{2}}^m-Y_{s_{2}}^m\|) -W_m(k,0)W_m(m,0)\right].
		\end{aligned}
	\end{equation*}
	Hence we get $\uppercase\expandafter{\romannumeral2}_{m,m}$ once we calculate the results of $\mathbf{E}I_{\epsilon}(\|Y_{t_{1}}^m-Y_{t_{1}+k}^m\|)I_{\epsilon}(\|Y_{t_2}^m-Y_{s_2}^m\|)$, which depends on  the relationship between $I_{\epsilon}(\|Y_{t_1}^m-Y_{t_1+k}^m\|)$ and $I_{\epsilon}(\|Y_{t_{2}}^m-Y_{s_{2}}^m\|)$. Similar to the above method, we correspond the set of all random variables involved in $I_{\epsilon}(\|Y_{t_1}^m-Y_{t_1+k}^m\|)$ and $I_{\epsilon}(\|Y_{t_2}^m-Y_{s_2}^m\|)$ to the following two sets respectively:
	\begin{equation}
		\begin{aligned}
			&A_{t_{1},k}=\left\{t_{1},t_{1}+1,\cdots,t_{1}+k,\cdots,t_{1}+m+k-1 \right\},\\
			&B_{t_{2},s_{2}}
			=\left\{t_{2},\cdots,t_{2}+m-1,s_{2},\cdots,s_{2}+m-1 \big|s_{2}-t_{2}\geq{m}\right\},
		\end{aligned}
	\end{equation}
	where $B_{t_{2},s_{2}}$ can be rewritten as below: 
	\begin{equation}\label{Bdivision}
	B_{t_{2},s_{2}}=B_{t_{2}}+B_{s_{2}}, 
	B_{t_{2}}=\left\{t_{2},\cdots,t_{2}+m-1\right\} ,B_{s_{2}}=\left\{s_{2},\cdots,s_{2}+m-1\right\}.
	\end{equation}
	
	Thus, the number of all elements contained in  $A_{t_1,k}\cap{B_{t_2,s_2}}$ reflects the relationship between $I_{\epsilon}(\|Y_{t_1}^m-Y_{t_1+k}^m\|)$ and $I_{\epsilon}(\|Y_{t_2}^m-Y_{s_2}^m\|)$, for example, $I_{\epsilon}(\|Y_{t_1}^m-Y_{t_1+k}^m\|)$ is independent with $I_{\epsilon}(\|Y_{t_2}^m-Y_{s_2}^m\|)$ if and only if $^{\#}{(A_{t_{1},k}\cap{B_{t_{2},s_{2}}})}=0$.
	Therefore, according to the value of $^{\#}{(A_{t_{1},k}\cap{B_{t_{2},s_{2}}})}$, we divive $\Phi(k)$ into the forllowing three parts:
	\begin{equation}\label{DivitionPhi}
		\Phi(k)=\Phi_{1}(k)+\Phi_{2}(k)+\Phi_{3}(k),
	\end{equation}
	where
	\begin{equation*}
		\begin{aligned}
			\Phi_{1}(k)&=\sum_{\mbox{\scriptsize{$\begin{array}{c}
							^{\#}{(A_{t_{1},k}\cap{B_{t_{2},s_{2}}})}=0\end{array}$}}} \left[\mathbf{E}I_{\epsilon}(\|Y_{t_{1}}^{m}-Y_{t_{1}+k}^m\|) I_{\epsilon}(\|Y_{t_{2}}^m-Y_{s_{2}}^m\|)  -W_m(k,0)W_m(m,0) \right],
		\end{aligned}
	\end{equation*}
	\begin{equation*}
		\begin{aligned}
			\Phi_{2}(k)&=\sum_{\mbox{\scriptsize{$\begin{array}{c}
							j=1\\
							^{\#}{(A_{t_{1},k}\cap{B_{t_{2},s_{2}}})}=j\end{array}$}}}^{m} \left[\mathbf{E}I_{\epsilon}(\|Y_{t_{1}}^m-Y_{t_{1}+k}^m\|) I_{\epsilon}(\|Y_{t_{2}}^m-Y_{s_{2}}^m\|) -W_m(k,0)W_m(m,0)\right],
		\end{aligned}
	\end{equation*}
	\begin{equation*}
		\begin{aligned}
			\Phi_{3}(k)&=\sum_{\mbox{\scriptsize{$\begin{array}{c}
							j=m+1\\
							^{\#}{(A_{t_{1},k}\cap{B_{t_{2},s_{2}}})}=j\end{array}$}}}^{m+k}\left[\mathbf{E}I_{\epsilon}(\|Y_{t_{1}}^{m}-Y_{t_{1}+k}^m\|) I_{\epsilon}(\|Y_{t_{2}}^m-Y_{s_{2}}^m\|) -W_m(k,0)W_m(m,0)\right].
		\end{aligned}
	\end{equation*}
	Obviouly, we have 
	\begin{equation}\label{Phi1}
		\Phi_{1}(k)=0,
	\end{equation}
	because the independence is satisfied in this case.
	
	As for $\Phi_{3}(k)$,  $\left\{^{\#}{(A_{t_{1},k}\cap{B_{t_{2},s_{2}}})}=j,j=m+1,\cdots,m+k\right\}$ and $\{A_{t_{1},k}\cap{B_{t_{2}}}\neq{\emptyset}, A_{t_{1}}\cap{B_{s_{2}}}\neq{\emptyset}\}$ are equivalent, which means that $t_1$, $t_2$ and $s_2$ are mutually restricted, therefore, there are $O(T)$ terms in $\Phi_3(k)$, that is, 
	\begin{equation}\label{Phi3}
		\Phi_{3}(k)=O(T).
	\end{equation}
	
	$\Phi_{2}(k)$ contains all the terms when $^{\#}{(A_{t_{1},k}\cap{B_{t_{2},s_{2}}})}=1,\cdots, m$. Considering 
	$^{\#}{(A_{t_{1},k})}=m+k$, $^{\#}{(B_{t_{2}})}=m$ and  $^{\#}{(B_{s_{2}})}=m$, $^{\#}{(A_{t_{1},k}\cap{B_{t_{2},s_{2}}})}=1,\cdots, m$ can be divided into the following two sub-cases according to  \eqref{Bdivision}:
	$$^{\#}{(A_{t_{1},k}\cap{B_{s_{2}}})}=0, ^{\#}{(A_{t_{1},k}\cap{B_{t_{2}}})}=1,\cdots,m,$$
	and 
	$$^{\#}{(A_{t_{1},k}\cap{B_{t_{2}}})}=0, ^{\#}{(A_{t_{1},k}\cap{B_{s_{2}}})}=1,\cdots,m,$$
	further, we have
	\begin{equation*}
		\Phi_{2}(k)=\Phi_{2,1}(k)+\Phi_{2,2}(k),
	\end{equation*}
	where
	\begin{equation*}
		\begin{aligned}
			\Phi_{2,1}(k)&=\sum\limits_{\mbox{\scriptsize{$\begin{array}{c}
							j=1\\
							^{\#}{(A_{t_{1},k}\cap{B_{t_{2}}})}=j\\
							^{\#}{(A_{t_{1},k}\cap{B_{s_{2}}})}
							=0\end{array}$}}}^m\left[\mathbf{E}I_{\epsilon}(\|Y_{t_{1}}^m-Y_{t_{1}+k}^m\|) I_{\epsilon}(\|Y_{t_{2}}^m-Y_{s_{2}}^m\|) -W_m(k,0)W_m(m,0)\right],
		\end{aligned}
	\end{equation*}
	and
	\begin{equation*}
		\begin{aligned}
			\Phi_{2,2}(k)&=\sum\limits_{\mbox{\scriptsize{$\begin{array}{c}
							j=1\\
							^{\#}{(A_{t_{1},k}\cap{B_{s_{2}}})}=j\\
							^{\#}{(A_{t_{1},k}\cap{B_{t_{2}}})}=0\end{array}$}}}^m \left[\mathbf{E}I_{\epsilon}(\|Y_{t_{1}}^m-Y_{t_{1}+k}^m\|) I_{\epsilon}(\|Y_{t_{2}}^m-Y_{s_{2}}^m\|) -W_m(k,0)W_m(m,0)\right].
		\end{aligned}
	\end{equation*}
	In the cases of $\Phi_{2,1}(k)$ and $\Phi_{2,2}(k)$, the dependence of $I_{\epsilon}(\|Y_{t_{1}}^m-Y_{t_{1}+k}^m\|)$ and $I_{\epsilon}(\|Y_{t_{2}}^m-Y_{s_{2}}^m\|)$ is little complicated.
	To have a more intuitive understanding of $\Phi_{2,1}(k)$ and $\Phi_{2,2}(k)$, for fixed $k$ and $m$, we associate $I_{\epsilon}(\|Y_{t_{1}}^m-Y_{t_{1}+k}^m\|)$ with the graph $G_{t_{1},k}$ in Figure \ref{FigureGt1k},
	\begin{figure}[H]
		\centering
		\includegraphics[height=2.47cm,width=11.0cm]{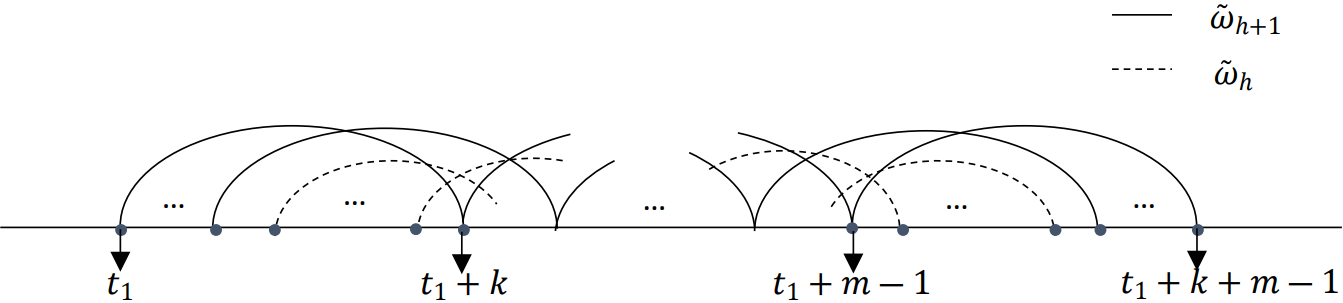}
		\caption{Graph $G_{t_{1},k}$ for $I_{\epsilon}(\|Y_{t_{1}}^m-Y_{t_{1}+k}^m\|)$.}
		\label{FigureGt1k}
	\end{figure}
	where $\tilde{\omega}_{h+1}$ is a chain of length $h+1$; $\tilde{\omega}_{h}$ is a chain of length $h$, $A_{t_{1},k}=\{t_{1},t_{1}+1\cdots,t_{1}+k+m-1\}$ introduced above is the set of vertices of all chains included in graph $G_{t_{1},k}$. In short, $G_{t_{1},k}$ contains $m+k$ points, and take them as vertices to obtain $k-i$ chains of length $h$ and $i$ chains of length $h+1$ in the order shown in Figure \ref{FigureGt1k}. For example, when $m=5,$ $k=3$, the graph associated with $I_{\epsilon}(\|Y_{t_{1}}^5-Y_{t_{1}+3}^5\|)$ is presented in Figure \ref{FigureG53}:
	
	\begin{figure}[H]
		\centering
		\includegraphics[height=2.25cm,width=9.58cm]{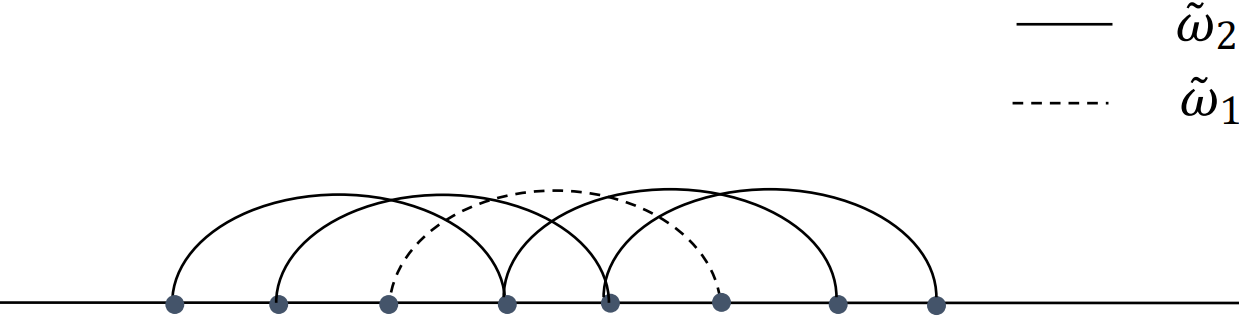}
		\caption{Graph for $I_{\epsilon}(\|Y_{t_{1}}^5-Y_{t_{1}+3}^5\|)$}
		\label{FigureG53}
	\end{figure}
	
	In addition, for fixed $m$, we associate $I_{\epsilon}(\|Y_{t_{2}}^m-Y_{s_{2}}^m\|)$ with the graph $G_{t_{2},s_{2}}$ in Figure \ref{FigureGt2s2},
	
	\begin{figure}[H]
		\centering
		\includegraphics[height=1.85cm,width=9.58cm]{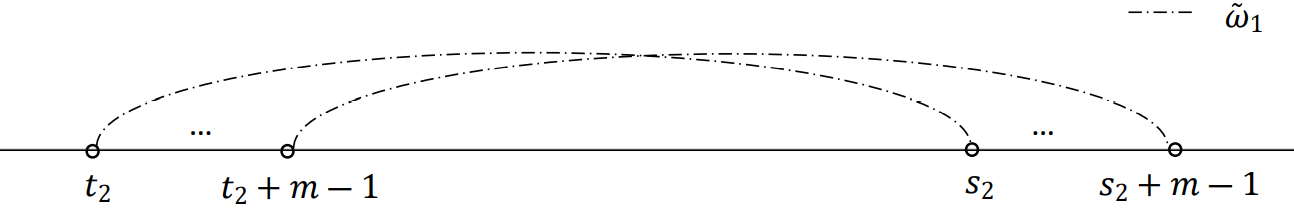}
		\caption{Graph $G_{t_{2},s_{2}}$ for $I_{\epsilon}(\|Y_{t_{2}}^m-Y_{s_{2}}^m\|)$.}
		\label{FigureGt2s2}
	\end{figure}
	where $\tilde{\omega}_{1}$ is a chain of length 1, $B_{t_{2}}$ and $B_{s_{2}}$ are two sets of vertices of $G_{t_{2},s_{2}}$. It should be noted that $B_{t_{2}}$ and $B_{s_{2}}$ never overlap and $B_{t_{2}}$ is always on the left side of $B_{s_{2}}$. To put it simply, $G_{t_{2},s_{2}}$ is composed of $m$ chains of length 1, with adjacent chains spaced by equal 1 unit. For example, when $m=5$, the graph $G_{t_{2},s_{2}}$ associated with $I_{\epsilon}(\|Y_{t_{2}}^5-Y_{s_{2}}^5\|)$ is as follows:
	
	\begin{figure}[H]
		\centering
		\includegraphics[height=1.55cm,width=10.28cm]{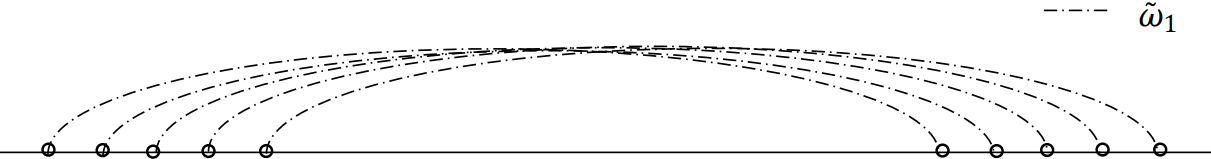}
		\caption{Graph for $I_{\epsilon}(\|Y_{t_{2}}^5-Y_{s_{2}}^5\|)$.}
	\end{figure}
	
	Combining these two graphs $G_{t_{1},k}$ and $G_{t_{2},s_{2}}$, we know that terms involved in $\Phi_{2,1}(k)$ and $\Phi_{2,2}(k)$ correspond to the following two situations respectively:
	\begin{itemize}
		\item For a fixed graph $G_{t_{1},k}$, its vertex set $A_{t_{1},k}$ only has an intersection with the vertex set $B_{t_{2}}$ of graph $G_{t_{2},s_{2}}$, but has no intersection with $B_{s_{2}}$, and $B_{s_{2}}$ will always be located on the right side of these two sets;
		\item For a fixed graph $G_{t_{1},k}$, its vertex set $A_{t_{1},k}$ only has an intersection with the vertex set $B_{s_{2}}$ of graph $G_{t_{2},s_{2}}$, but has no intersection with $B_{t_{2}}$, and $B_{t_{2}}$ will always be located on the left side of these two sets.
	\end{itemize}
	It's easy to understand that $\Phi_{2,1}(k)=\Phi_{2,2}(k)$, so
	\begin{equation}\label{DivitionPhi2}
		\Phi_{2}(k)=\Phi_{2,1}(k)+\Phi_{2,2}(k)=2\Phi_{2,1}(k).
	\end{equation}
	Thus we only need to study $\Phi_{2,1}(k)$, specifically:
	
	\begin{equation*}
		\begin{aligned}
			&\Phi_{2,1}(k)=\sum\limits_{\mbox{\scriptsize{$\begin{array}{c}
							j=1\\
							^{\#}{(A_{t_{1},k}\cap{B_{t_{2}}})}=j\\
							^{\#}{(A_{t_{1},k}\cap{B_{s_{2}}})}=0\end{array}$}}}^{m} \left[\mathbf{E}I_{\epsilon}(\|Y_{t_{1}}^m-Y_{t_{1}+k}^m\|) I_{\epsilon}(\|Y_{t_{2}}^m-Y_{s_{2}}^m\|)- W_m(k,0)W_m(m,0)\right]\\
			&=\sum_{t_{1}=m}^{\mbox{\tiny{$\begin{array}{c}
							T-3m-\\
							k+2\end{array}$}}}\sum_{\mbox{\tiny{$\begin{array}{c}
							s_{2}=t_{1}+2m\\
							+k-1 \end{array}$}}}^{T-m+1}\left[\phi_{1}(t_{1},s_{2}) +\phi_{2}(t_{1},s_{2})+\phi_{3}(t_{1},s_{2})\right]+O(T),
		\end{aligned}
	\end{equation*}
	where
	\begin{equation*}
		\begin{aligned}
			\phi_{1}(t_{1},s_{2})&=\sum_{t_{2}=t_{1}-m+1}^{t_{1}} \left[\mathbf{E}I_{\epsilon}(\|Y_{t_{1}}^m-Y_{t_{1}+k}^m\|) I_{\epsilon}(\|Y_{t_{2}}^m-Y_{s_{2}}^m\|)
			-W_m(k,0)W_m(m,0)\right],
		\end{aligned}
	\end{equation*}
	\begin{equation*}
		\begin{aligned}
			\phi_{2}(t_{1},s_{2})&=\sum_{t_{2}=t_{1}+1}^{t_{1}+k-1} \left[\mathbf{E}I_{\epsilon}(\|Y_{t_{1}}^m-Y_{t_{1}+k}^m\|) I_{\epsilon}(\|Y_{t_{2}}^m-Y_{s_{2}}^m\|) -W_m(k,0)W_m(m,0)\right],
		\end{aligned}
	\end{equation*}
	
	\begin{equation*}
		\begin{aligned}
			\phi_{3}(t_{1},s_{2})&=\sum_{t_{2}=t_{1}+k}^{t_{1}+k+m-1} \left[\mathbf{E}I_{\epsilon}(\|Y_{t_{1}}^m-Y_{t_{1}+k}^m\|) I_{\epsilon}(\|Y_{t_{2}}^m-Y_{s_{2}}^m\|) -W_m(k,0)W_m(m,0)\right].
		\end{aligned}
	\end{equation*}
	According to the symmetry of Figure \ref{FigureGt1k}, it's easy to find that $\phi_{1}(t_{1},s_{2})=\phi_{3}(t_{1},s_{2})$, since, for each term in $\phi_{1}(t_{1},s_{2})$, there is a term in $\phi_{3}(t_{1},s_{2})$ so that the relative position between $G_{t_{1},k}$ in Figure \ref{FigureGt1k} and $G_{t_{2},s_{2}}$ in Figure \ref{FigureGt2s2} is essentially the same. In other words, for each term in $\phi_{1}(t_{1},s_{2})$ there is always a term in $\phi_{3}(t_{1},s_{2})$ satisfies that the dependence between $\{Y_{t_{1}}^m,Y_{t_{1}+k_{1}}^m\}$ and $\{Y_{t_{2}}^m,Y_{t_{2}+k_{2}}^m\}$ is exactly the same.
	Then, we have
	\begin{equation}\label{DivitionPsi21}
		\begin{aligned}
			\Phi_{2,1}(k)&=\sum_{t_{1}=m}^{T-3m-k+2}\sum_{\mbox{\tiny{$\begin{array}{c}
							s_{2}=t_{1}+2m\\
							+k-1 \end{array}$}}}^{T-m+1}\left[2\phi_{1}(t_{1},s_{2}) +\phi_{2}(t_{1},s_{2})\right]+O(T).
		\end{aligned}
	\end{equation}
	So far, we only need to focus on the results of $\phi_{1}(t_{1},s_{2})$ and $\phi_{2}(t_{1},s_{2})$. For $\phi_{1}(t_{1},s_{2})$, combining the graph $G_{t_{1},k}$ in Figure \ref{FigureGt1k} and $G_{t_{2},s_{2}}$ in Figure \ref{FigureGt2s2}, easy to verify that each term of it can be associated with a graph shaped like $G_{1}(t_{1},s_{2})$ given in Figure \ref{FigureG1t1s2}. 
	\begin{figure}[H]
		\centering
		\includegraphics[height=3.57cm,width=10.1cm]{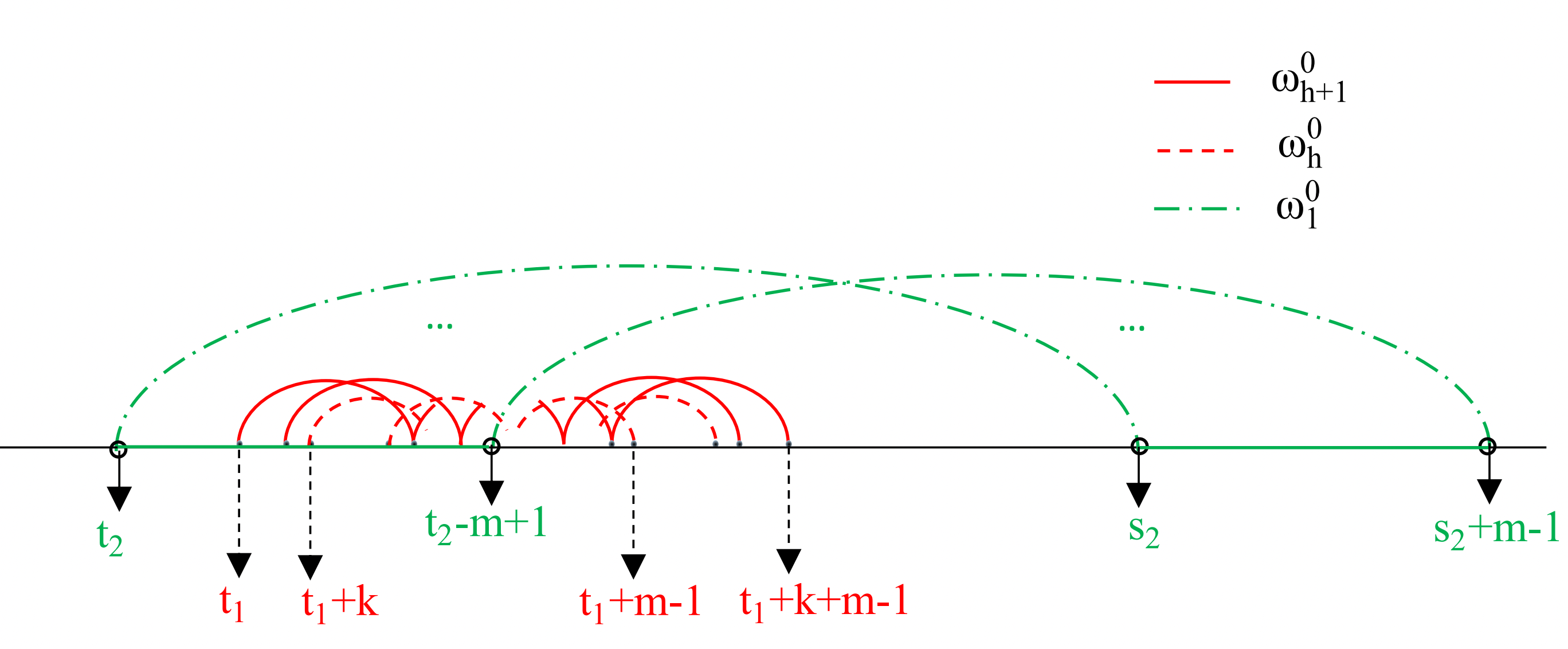}
		\caption{$G_{1}(t_{1},s_{2})$}
		\label{FigureG1t1s2}
	\end{figure}
	Then, let $d_{1}=t_{2}+m-t_{1}$, we have:
	\begin{equation}\label{phi1t1s2}
		\begin{aligned}
			\phi_{1}(t_{1},s_{2})&=\sum_{t_{2}=t_{1}-m+1}^{t_{1}} \left[\mathbf{E}I_{\epsilon}(\|Y_{t_{1}}^m-Y_{t_{1}+k}^m\|) I_{\epsilon}(\|Y_{t_{2}}^m-Y_{s_{2}}^m\|) 
			-W_m(k,0)W_m(m,0)\right]\\
			&=\sum_{d_{1}=1}^{m}\left[\mathbf{E} I_{\epsilon}(\|Y_{t_{1}}^m-Y_{t_{1}+k}^m\|) I_{\epsilon}(\|Y_{t_{1}+d_{1}-m}^m-Y_{s_{2}}^m\|)
			-W_m(k,0)W_m(m,0)\right]\\
			&=\sum_{d_{1}=1}^{m}[W_m(k,d_1)-W_m(k,0)W_m(m,0)].
		\end{aligned}
	\end{equation}
	
	Similarly, each terms of $\phi_{2}(t_{1},s_{2})$ can be associated with a graph shaped like $G_{2}(t_{1},s_{2})$ given in Figure \ref{FigureG2t1s2}. 
	\begin{figure}[H]
		\centering
		\includegraphics[height=3.57cm,width=10.1cm]{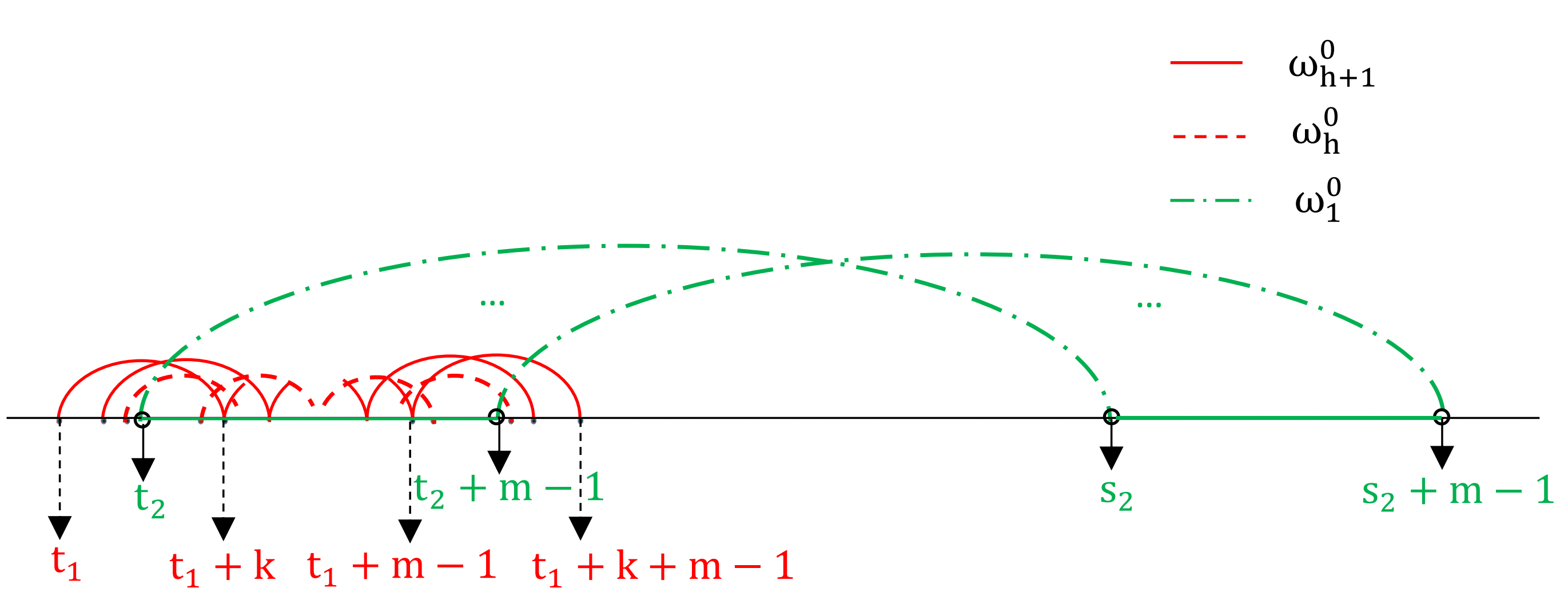}
		\caption{$G_{2}(t_{1},s_{2})$}
		\label{FigureG2t1s2}
	\end{figure}
	Then, let $d_{2}=t_{2}-t_{1}$, combining with \eqref{Umk}, we get:
	\begin{equation}\label{phi2t1s2}
		\begin{aligned}
			\phi_{2}(t_{1},s_{2})&=\sum_{t_{2}=t_{1}+1}^{t_{1}+k-1} \left[\mathbf{E}I_{\epsilon}(\|Y_{t_{1}}^m-Y_{t_{1}+k}^m\|) I_{\epsilon}(\|Y_{t_{2}}^m-Y_{s_{2}}^m\|)  -W_m(k,0)W_m(m,0)\right]\\
			&=\sum_{d_{2}=1}^{k-1}\left[\mathbf{E} I_{\epsilon}(\|Y_{t_{1}}^m-Y_{t_{1}+k}^m\|) I_{\epsilon}(\|Y_{t_{1}+d_{2}}^m-Y_{s_{2}}^m\|) -W_m(k,0)W_m(m,0)\right]\\
			&=\left[1-I_0(k-2i)\right]\left[\sum_{d_2=1}^iU_m(k,d_2)+ \sum_{d_2=i+1}^{k-i}U_m(k,i)+\sum_{d_2=k-i+1}^{k-1}U_m(k,k-d_2)\right]\\
			&+
			I_0(k-2i)\left[\sum_{d_2=1}^{k-i}U_m(k,d_2)+\sum_{d_2=k-i+1}^iU_m(k,k-i)+\sum_{d_2=i+1}^{k-1}U_m(k,k-d_2)\right]
		\end{aligned}
	\end{equation}
	Based on \eqref{DivitionPhi2}, \eqref{DivitionPsi21}, \eqref{phi1t1s2} and \eqref{phi2t1s2}, we get
	\begin{equation}\label{Phi2}
		\begin{aligned}
			\Phi_{2}(k)
			&=2\sum_{t_{1}=m}^{T-3m-k+2}\sum_{\mbox{\tiny{$\begin{array}{c}
							s_{2}=t_{1}+2m\\
							+k-1 \end{array}$}}}^{T-m+1}\left[2\phi_{1}(t_{1},s_{2}) +\phi_{2}(t_{1},s_{2})\right]+O(T)\\
			&=\mathcal{M}_{T,m}(k)\left[2\times{}\eqref{phi1t1s2}+\eqref{phi2t1s2}\right]+O(T),
		\end{aligned}
	\end{equation}
	where $\mathcal{M}_{T,m}(k)=(T-4m-k+3)(T-4m-k+4)$.
	Further,  according to \eqref{RVarCIIIPhi}, \eqref{DivitionPhi}, \eqref{Phi1}, \eqref{Phi3}, \eqref{Phi2}, we have
	\begin{equation}\label{RVarCIII}
		\begin{aligned}
			\uppercase\expandafter{\romannumeral2}_{m,m}&=\frac{1}{NN_{0}} \sum_{k=1}^{m-1}\mathcal{M}_{T,m}(k)\left[2\times{}\eqref{phi1t1s2}+\eqref{phi2t1s2}\right]+O\left(T^{-3}\right),
		\end{aligned}
	\end{equation}
	As a result,
	$\eqref{VarCI}=\breve{\sigma}_m^2+O(T^{-3})$, where
	\begin{equation}
		\begin{aligned}
			\breve{\sigma}_m^2=	\frac{2}{N^2} \sum_{k=1}^{m-1}\mathcal{M}_{T,m}(k)\left[2\times{}\eqref{phi1t1s2}+\eqref{phi2t1s2}\right].
		\end{aligned}
	\end{equation}
	The proof of Theorem \ref{ThCLTCI} is completed.
\end{proof}

\section{Appendix B}
\begin{proof}[Proof of Theorem \ref{ThCLTKCI}:]
	Based on the Delta method, the asymptotic variance of $\mathcal{K}_{m,T}(\epsilon)$ is approximately equal to the variance of $\widetilde{\mathcal{K}}_{m,T}(\epsilon)$. Therefore, we only need to focus on the variance of $\widetilde{\mathcal{K}}_{m,T}(\epsilon)$. According to \eqref{tildeKmT}, we have
	\begin{eqnarray}\label{DivitionVartildeKmT}
		\begin{aligned}
			&\mathbf{Var}\left(\widetilde{\mathcal{K}}_{m,T}(\epsilon)\right)= V_{1}+V_{2}+V_{3}+V_{4}-V_{5}-V_{6}-V_{7}+V_{8}+V_{9}+V_{10},
		\end{aligned}
	\end{eqnarray}
	where
	\begin{equation*}
		\begin{aligned}
			&V_{1}=\mathbf{E}\left(C_{m,T}(\epsilon)-\mu_{m}\right)^2, \\
			&V_{2}=\frac{1}{N^2}\mathbf{E}\left(\sum_{k=1}^{m-1}(T_m-k)W_m^{(h)}(k,0) \left[\left(\widehat{\omega}_{h}^0\right) -\left(\omega_{h}^0\right)\right]\right)^2,\\
			&V_{3}=\frac{1}{N^2}\mathbf{E}\left(\sum_{k=1}^{m-1}(T_m-k)W_m^{(h+1)}(k,0) \left[\left(\widehat{\omega}_{h+1}^0\right) -\left(\omega_{h+1}^0\right)\right]\right)^2, \\
			&V_{4}=\left(\frac{N_{0}}{N}W_m^{(1)}(m,0)\right)^2 \mathbf{E}\left[\left(\widehat{\omega}_{1}^0\right) -\left(\omega_{1}^0\right)\right]^2\\
			&V_{5}=\frac{2}{N}\sum_{k=1}^{m-1}(T_m-k)W_m^{(h)}(k,0) \mathbf{E}\left[C_{m,T}(\epsilon)-\mu_{m}\right] \left[\left(\widehat{\omega}_{h}^0\right) -\left(\omega_{h}^0\right)\right],\\
			&V_{6}=\frac{2}{N}\sum_{k=1}^{m-1}(T_m-k)W_m^{(h+1)}(k,0) \mathbf{E}\left[C_{m,T}(\epsilon)-\mu_{m}\right] \left[\left(\widehat{\omega}_{h+1}^0\right) -\left(\omega_{h+1}^0\right)\right],\\
			&V_{7}=\frac{2N_{0}}{N}W_m^{(1)}(m,0) \mathbf{E}\left[C_{m,T}(\epsilon)-\mu_{m}\right] \left[\left(\widehat{\omega}_{1}^0\right) -\left(\omega_{1}^0\right)\right], \\
			&V_{8}=\frac{2}{N^2}\mathbf{E}\left(\sum_{k=1}^{m-1}(T_m-k)W_m^{(h)}(k,0) \left[\left(\widehat{\omega}_{h}^0\right) -\left(\omega_{h}^0\right)\right]\right)\\
					\end{aligned}
			\end{equation*}
				\begin{equation*}
			\begin{aligned}
			&\qquad \qquad \times{} \left(\sum_{k=1}^{m-1}(T_m-k)W_m^{(h+1)}(k,0) \left[\left(\widehat{\omega}_{h+1}^0\right) -\left(\omega_{h+1}^0\right)\right]\right), \\
			&V_{9}=\frac{2N_{0}}{N^2}W_m^{(1)}(m,0)\sum_{k=1}^{m-1}(T_m-k)W_m^{(h)}(k,0) \mathbf{E}\left[\widehat{\omega}_{1}^0-\omega_{1}^0\right] \left[\left(\widehat{\omega}_{h}^0\right) -\left(\omega_{h}^0\right)\right],\\
			&V_{10}=\frac{2N_{0}}{N^2}W_m^{(1)}(m,0)\sum_{k=1}^{m-1}(T_m-k)W_m^{(h+1)}(k,0)\mathbf{E}\left[\widehat{\omega}_{1}^0-\omega_{1}^0\right] \left[\left(\widehat{\omega}_{h+1}^0\right) -\left(\omega_{h+1}^0\right)\right].
		\end{aligned}
	\end{equation*}
	Among them,
	\begin{equation}\label{V1}
		V_{1}=\mathbf{E}\left(C_{m,T}(\epsilon)-\mu_{m}\right)^2 =\sigma_{m,m}^2+O(T^{-3}),
	\end{equation}
	where $\sigma_{m,m}^2=\eqref{CIsigma}$ in Theorem \ref{ThCLTCI}.
	\begin{eqnarray}\label{V20}
		\begin{aligned}
			V_{2}&=\frac{1}{N^2}\mathbf{E}\left(\sum\limits_{k=1}^{m-1}(T_m-k)W_m^{(h)}(k,0) \left[\left(\widehat{\omega}_{h}^0\right) -\left(\omega_{h}^0\right)\right]\right)^2\\
			&=\frac{1}{N^2}\mathbf{E} \left(\sum\limits_{k_{1}=1}^{m-1}(T_m-k_1)W_m^{(h_1)}(k_1,0) \left[\left(\widehat{\omega}_{h_{1}}^0\right) -\left(\omega_{h_{1}}^0\right)\right]\right)\\
			&\qquad \times{} \left(\sum\limits_{k_{2}=1}^{m-1}(T_m-k_2)W_m^{(h_2)}(k_2,0) \left[\left(\widehat{\omega}_{h_{2}}^0\right) -\left(\omega_{h_{2}}^0\right)\right]\right)\\
			&=\frac{1}{N^2}\sum_{k=1}^{m-1}(T_m-k)^2\left(W_m^{(h)}(k,0)\right)^2\mathbf{E} \left[\left(\widehat{\omega}_{h}^0\right) -\left(\omega_{h}^0\right)\right]^2\\
			&+\frac{1}{N^2}\sum_{k_{1}\neq{k_{2}}}(T_m-k_1)(T_m-k_2)W_m^{(h_1)}(k_1,0)W_m^{(h_2)}(k_2,0)
			\\ & \qquad \qquad \times{} 
			\mathbf{E} \left[\left(\widehat{\omega}_{h_{1}}^0\right) -\left(\omega_{h_{1}}^0\right)\right] \left[\left(\widehat{\omega}_{h_{2}}^0\right) -\left(\omega_{h_{2}}^0\right)\right],
		\end{aligned}
	\end{eqnarray}
	where $h=\lfloor{\frac{m}{k}}\rfloor$,  $h_{1}=\lfloor{\frac{m}{k_{1}}}\rfloor$,  $h_{2}=\lfloor{\frac{m}{k_{2}}}\rfloor$.
	Note that the results of $V_{2}$ is determined by the results of $\mathbf{E}\left[\left(\widehat{\omega}_{h}^0\right) -\left(\omega_{h}^0\right)\right]^2$ and $\mathbf{E} \left[\left(\widehat{\omega}_{h_{1}}^0\right)-\left(\omega_{h_{1}}^0\right)\right] \left[\left(\widehat{\omega}_{h_{2}}^0\right)-\left(\omega_{h_{2}}^0\right)\right]$, so we focus on these two parts. Reviewing the definition of $\widehat{\omega}_{h}^0$ in \eqref{estomegal0}, we have
	
	\begin{equation}\label{Eomegah0}
		\begin{aligned}
			&\mathbf{E}\left[\left(\widehat{\omega}_{h}^0\right) -\left(\omega_{h}^0\right)\right]^2\\ &=C_{T,h}^2\mathbf{E}\left[\sum \limits_{t_{0},t_{1},\cdots,t_{h} \atop\mbox{\tiny distrinct}} {\prod_{\rho=0}^{h-1}I_{\epsilon}(|u_{t_{\rho}}-u_{t_{\rho+1}}|)} -\left(\omega_{h}^0\right)\right]^2\\
			&=C_{T,h}^2 \sum\limits_{t_{0},t_{1},\cdots,t_{h} \atop\mbox{\tiny distrinct}} \sum\limits_{s_{0},s_{1},\cdots,s_{h} \atop\mbox{\tiny distrinct}}\left[ \mathbf{E} {\prod_{\rho_1=0}^{h-1} I_{\epsilon}(|u_{t_{\rho_1}}-u_{t_{\rho_1+1}}|)}\prod_{\rho_2=0}^{h-1} I_{\epsilon}(|u_{s_{\rho_2}}-u_{s_{\rho_2+1}}|) -\left(\omega_{h}^0\right)^2\right].
		\end{aligned}
	\end{equation}
	For simplify, we associate all random variables invovled in
	$$\prod_{\rho_1=0}^{h-1}I_{\epsilon}(|u_{t_{\rho_1}}-u_{t_{\rho_1+1}}|) \text{and} \prod_{\rho_2=0}^{h-1}I_{\epsilon}(|u_{s_{\rho_2}}-u_{s_{\rho_2+1}}|)$$ with the following two sets respectively:
	\begin{equation*}
		A_{t}^h=\{t_{0},t_{1},\cdots,t_{h}\}\qquad \mbox{and} \qquad
		A_{s}^h=\{s_{0},s_{1},\cdots,s_{h}\}.
	\end{equation*}
	Then the number of elements contained by $A_{t}^{h}\cap{A_{s}^{h}}$ can reflect the relationship between $\prod_{\rho_1=0}^{h-1}I_{\epsilon}(|u_{t_{\rho_1}}-u_{t_{\rho_1+1}}|) $ and $\prod_{\rho_2=0}^{h-1}I_{\epsilon}(|u_{s_{\rho_2}}-u_{s_{\rho_2+1}}|)$, for instance, when $^{\#}{\left(A_{t}^{h}\cap{A_{s}^h}\right)}=0$, $\prod_{\rho_1=0}^{h-1}I_{\epsilon}(|u_{t_{\rho_1}}-u_{t_{\rho_1+1}}|) $ is independent with $\prod_{\rho_2=0}^{h-1}I_{\epsilon}(|u_{s_{\rho_2}}-u_{s_{\rho_2+1}}|)$. So we can rewrite \eqref{Eomegah0} as follows:
	\begin{equation}\label{Eomegah00}
		\begin{aligned}
			\eqref{Eomegah0}&=C_{T,h}^2\sum_{\mbox{\tiny{$\begin{array}{c}
							j=0\\
							^{\#}{\left(A_{t}^{h}\cap{A_{s}^{h}}\right)}=j\end{array}$}}}^h \mathbf{E}\left[\prod_{\rho_1=0}^{h-1} I_{\epsilon}(|u_{t_{\rho_1}}-u_{t_{\rho_1+1}}|) \prod_{\rho_2=0}^{h-1} I_{\epsilon}(|u_{s_{\rho_2}}-u_{s_{\rho_2+1}}|) -\left(\omega_{h}^0\right)^2\right].
		\end{aligned}
	\end{equation}
	When $^{\#}{\left(A_{t}^{h}\cap{A_{s}^h}\right)}=0$, the independence is satisified, so we get 
	\begin{equation}\label{Eomegah1}
		\begin{aligned}
			&\quad \sum_{\mbox{\tiny{$\begin{array}{c}
							^{\#}{\left(A_{t}^{h}\cap{A_{s}^{h}}\right)}=0\end{array}$}}} \left[\mathbf{E}\prod_{\rho_1=0}^{h-1} I_{\epsilon}(|u_{t_{\rho_1}}-u_{t_{\rho_1+1}}|) 
			\prod_{\rho_2=0}^{h-1}
			I_{\epsilon}(|u_{s_{\rho_2}}-u_{s_{\rho_2+1}}|) -\left(\omega_{h}^0\right)^2\right]\\
			&=\sum_{\mbox{\tiny{$\begin{array}{c}
							^{\#}{\left(A_{t}^{h}\cap{A_{s}^{h}}\right)}=0\end{array}$}}} \left[\mathbf{E}\prod_{\rho_1=0}^{h-1} I_{\epsilon}(|u_{t_{\rho_1}}-u_{t_{\rho_1+1}}|)
			\mathbf{E}\prod_{\rho_2=0}^{h-1} I_{\epsilon}(|u_{s_{\rho_2}}-u_{s_{\rho_2+1}}|)-\left(\omega_{h}^0\right)^2\right]\\
			&=0.
		\end{aligned}
	\end{equation}
	When $^{\#}{\left(A_{t}^{h}\cap{A_{s}^{h}}\right)}=j,j=1,\cdots,h$,  there are $2h+2-j,j=1,\cdots,h$ points that are free because $^{\#}{A_{t}^{h}}=^{\#}{A_{s}^{h}}=h+1$. Hence, for fixed $j$, there are $O(T^{2h+2-j})$ terms included in this case. Further we have:
	\begin{equation}\label{Eomegah2}
		\begin{aligned}
			&\quad \sum_{\mbox{\tiny{$\begin{array}{c}
							j=1\\
							^{\#}{\left(A_{t}^{h}\cap{A_{s}^{h}}\right)}=j\end{array}$}}}^{h+1} \mathbf{E}\left[\prod_{\rho_1=0}^{h-1} I_{\epsilon}(|u_{t_{\rho_1}}-u_{t_{\rho_1+1}}|) \prod_{\rho_2=0}^{h-1} I_{\epsilon}(|u_{s_{\rho_2}}-u_{s_{\rho_2+1}}|) -\left(\omega_{h}^0\right)^2\right]\\
			&=\sum_{j=1}^{h+1}O\left(T^{2h+2-j}\right)=O\left(T^{2h+1}\right),
		\end{aligned}
	\end{equation}
	Based on \eqref{Eomegah00}, \eqref{Eomegah1} and \eqref{Eomegah2}, we have
	\begin{equation}\label{Eomegah1h1}
		\mathbf{E}\left[\left(\widehat{\omega}_{h}^0\right) -\left(\omega_{h}^0\right)\right]^2=O\left(T^{-1}\right).
	\end{equation}
	With the same argument, we get
	\begin{equation}\label{Eomegah1h2}
		\mathbf{E} \left[\left(\widehat{\omega}_{h_{1}}^0\right) -\left(\omega_{h_{1}}^0\right)\right] \left[\left(\widehat{\omega}_{h_{2}}^0\right) -\left(\omega_{h_{2}}^0\right)\right] =O\left(T^{-1}\right).
	\end{equation}
	According to \eqref{V20}, \eqref{Eomegah1h1} and \eqref{Eomegah1h2}, we have
	\begin{equation}\label{V2}
		V_{2}=O\left(T^{-3}\right).
	\end{equation}
	Analogously, we obain
	\begin{equation}\label{V3}
		V_{3}=O\left(T^{-3}\right).
	\end{equation}
	Hence, $V_8\leq{V_2+V_3}=O(T^{-3})$. 
	
	For $V_{4}$, we just need to focus on  $\mathbf{E}\left[\left(\widehat{\omega}_{1}^0\right) -\left(\omega_{1}^0\right)\right]^2$, where $$\left(\widehat{\omega}_{1}^0\right)=\frac{1}{T(T-1)} \sum\limits_{i\neq{j}}I_{\epsilon}(|u_i-u_j|).$$ 
	Referring to the above skills, we divide $\mathbf{E}\left[\left(\widehat{\omega}_{1}^0\right) -\left(\omega_{1}^0\right)\right]^2$ into the following three parts according to the different values of $^{\#}{\left(A_{1}^2\cap{A_{2}^2}\right)}$:
	\begin{equation*}
		\begin{aligned}
			&\mathbf{E}\left[\left(\widehat{\omega}_{1}^0\right) -\left(\omega_{1}^0\right)\right]^2\\
			&=C_{T,1}^2\left\{\quad
			\sum_{^{\#}{\left(A_{1}^2\cap{A_{2}^2}\right)}=0}\left[\mathbf{E} I_{\epsilon}(|u_{i_1}-u_{j_1}|)I_{\epsilon}(|u_{i_{2}}-u_{j_{2}}|) -\left(\omega_{1}^0\right)^2\right] \right.\\
			& \left. \qquad \qquad \qquad \quad +\sum_{^{\#}{\left(A_{1}^2\cap{A_{2}^2}\right)}=1}\left[\mathbf{E} I_{\epsilon}(|u_{i_1}-u_{j_1}|)I_{\epsilon}(|u_{i_{2}}-u_{j_{2}}|)-\left(\omega_{1}^0\right)^2\right] \right. \\
			& \left. \qquad \qquad \qquad \quad +\sum_{^{\#}{\left(A_{1}^2\cap{A_{2}^2}\right)}=2}\left[\mathbf{E} I_{\epsilon}(|u_{i_1}-u_{j_1}|)I_{\epsilon}(|u_{i_{2}}-u_{j_{2}}|) -\left(\omega_{1}^0\right)^2\right]\right\},
		\end{aligned}
	\end{equation*}
	where $A_{1}^2=\{i_{1},j_{1}\}$ and $A_{2}^{2}=\{i_{2},j_{2}\}$, which correspond to $\{u_{i_1},u_{j_1}\}$ and $\{u_{i_2},u_{j_2}\}$.
	Easy to find that $^{\#}{\left(A_{1}^2\cap{A_{1}^2}\right)=0}$ indicates $\{u_{i_{1}},u_{j_{1}}\}$ is independent of $\{u_{i_{2}},u_{j_{2}}\}$, so that
	\begin{equation}\label{Eomega11}
		\begin{aligned}
			&\quad \sum_{^{\#}{\left(A_{1}^2\cap{A_{2}^2}\right)}=0}\left[\mathbf{E} I_{\epsilon}(|u_{i_1}-u_{j_1}|)I_{\epsilon}(|u_{i_{2}}-u_{j_{2}}|) -\left(\omega_{1}^0\right)^2\right]\\
			&=\sum_{^{\#}{\left(A_{1}^2\cap{A_{2}^2}\right)}=0}\left[\mathbf{E} I_{\epsilon}(|u_{i_1}-u_{j_1}|) \mathbf{E}I_{\epsilon}(|u_{i_{2}}-u_{j_{2}}|) -\left(\omega_{1}^0\right)^2\right]\\
			&=0.
		\end{aligned}
	\end{equation}
	For the sum of all terms under $^{\#}{\left(A_{1}^2\cap{A_{2}^2}\right)}=1$ and $^{\#}{\left(A_{1}^2\cap{A_{2}^2}\right)}=2$, through some elementary calculations, we obtain:
	\begin{equation}\label{Eomega12}
		\begin{aligned}
			&\quad \sum_{^{\#}{\left(A_{1}^2\cap{A_{2}^2}\right)}=2}\left[\mathbf{E} I_{\epsilon}(|u_{i_1}-u_{j_1}|)I_{\epsilon}(|u_{i_{2}}-u_{j_{2}}|) -\left(\omega_{1}^0\right)^2\right] \\
			&=\frac{2}{C_{T,1}}\left[\left(\omega_{1}^0\right) -\left(\omega_{1}^0\right)^2\right],
		\end{aligned}
	\end{equation}
	\begin{equation}\label{Eomega13}
		\begin{aligned}
			&\quad \sum_{^{\#}{\left(A_{1}^2\cap{A_{2}^2}\right)}=1}\left[\mathbf{E} I_{\epsilon}(|u_{i_1}-u_{j_1}|)I_{\epsilon}(|u_{i_{2}}-u_{j_{2}}|) -\left(\omega_{1}^0\right)^2\right] \\
			&=\frac{4}{C_{T,2}}\left[\left(\omega_{2}^0\right) -\left(\omega_{1}^0\right)^2\right].
		\end{aligned}
	\end{equation}
	Combining \eqref{Eomega11}, \eqref{Eomega12} and \eqref{Eomega13}, we obtain
	\begin{equation}\label{Eomega1}
		\begin{aligned}
			\mathbf{E}\left[\left(\widehat{\omega}_{1}^0\right) -\left(\omega_{1}^0\right)\right]^2=2C_{T,1}\left[\left(\omega_{1}^0\right)+2(T-2)\left(\omega_{2}^0\right) -(2T-3) \left(\omega_{1}^0\right)^2\right].
		\end{aligned}
	\end{equation}
	Furthermore, we have
	\begin{equation}\label{V4sigma11}
		\begin{aligned}
			V_{4}&=\sigma_{1,1}^2=2\left(\frac{N_{0}}{N}W_m^{(1)}(m,0)\right)^2C_{T,1} \left[\left(\omega_{1}^0\right)+2(T-2)\left(\omega_{2}^0\right) -(2T-3) \left(\omega_{1}^0\right)^2\right].
		\end{aligned}
	\end{equation}
	
	The calculation of $V_{5}$ and $V_6$ are completely similar. Thus, to avoid redundancy, we only introduce the calculation of $V_5$ in detail and directly give the result of $V_6$. As for $V_5$, based on \eqref{DivitionCI}, we divide $\mathbf{E}\left[C_{m,T}(\epsilon)-\mu_{m}\right] \left[\left(\widehat{\omega}_{h}^0\right) -\left(\omega_{h}^0\right)\right]$ into the following two parts, 
	\begin{equation}\label{DivitionV5}
		\begin{aligned}
			&\quad \mathbf{E}\left[C_{m,T}(\epsilon)-\mathbf{E}C_{m,T}(\epsilon)\right] \left[\left(\widehat{\omega}_{h}^0\right) -\left(\omega_{h}^0\right)\right]\\
			&=\mathbf{E}\left[\breve{C}_{m,T}(\epsilon) -\mathbf{E}\breve{C}_{m,T}(\epsilon)\right] \left[\left(\widehat{\omega}_{h}^0\right) -\left(\omega_{h}^0\right)\right]\\ &+\frac{N_{0}}{N}\mathbf{E}\left[\widetilde{C}_{m,T}(\epsilon) -\mathbf{E}\widetilde{C}_{m,T}(\epsilon)\right] \left[\left(\widehat{\omega}_{h}^0\right) -\left(\omega_{h}^0\right)\right].
		\end{aligned}
	\end{equation}
	For the first part $\mathbf{E}\left[\breve{C}_{m,T}(\epsilon) -\mathbf{E}\breve{C}_{m,T}(\epsilon)\right] \left[\left(\widehat{\omega}_{h}^0\right) -\left(\omega_{h}^0\right)\right]$, combing with \eqref{breveCI} and \eqref{EbreveCI}, we have
	\begin{equation}\label{EbreCIomegah0}
		\begin{aligned}
			&\quad\mathbf{E}\left[\breve{C}_{m,T}(\epsilon) -\mathbf{E}\breve{C}_{m,T}(\epsilon)\right] \left[\left(\widehat{\omega}_{h}^0\right) -\left(\omega_{h}^0\right)\right]\\
			&=\mathbf{E}\left[\frac{1}{N}\sum_{k=1}^{m-1}\sum_{t=1}^{T_m-k} \left(I_{\epsilon}(\|Y_{t}^m-Y_{t+k}^m\|) -W_m(k,0)\right)\right] \\
			&\quad \times{}\left[C_{T,h} \sum\limits_{\alpha_{1},\cdots,\alpha_{h} \atop\mbox{\tiny distrinct}} \left(\prod_{\rho=1}^{h+1} I_{\epsilon}(|u_{\alpha_{\rho}}-u_{\alpha_{\rho+1}}|) -\left(\omega_{h}^0\right)\right)\right]\\
		\end{aligned}
	\end{equation}
	Similar to the previous, we correspond all random variables invovled in 
	$$\left\{Y_{t}^m,Y_{t+k}^m\right\} \text{and} \left\{u_{\alpha_1},\cdots, u_{\alpha_{h+1}}\right\}$$ to sets
	$
	A_{t}^{m+k}=\{t,t+1,\cdots,t+k+m-1\}$ and $ B_{\alpha}^{h+1}=\{\alpha_{1},\cdots,\alpha_{h+1}\}$ by turn, where $^{\#}{\left(A_{t}^{m+k}\right)}=m+k$, $^{\#}{\left(B_{\alpha}^{h+1}\right)}=h+1$.
	Then \eqref{EbreCIomegah0} can be rewritten as
	\begin{equation*}
		\begin{aligned}
			&\mathbf{E}\left[\breve{C}_{m,T}(\epsilon) -\mathbf{E}\breve{C}_{m,T}(\epsilon)\right] \left[\left(\widehat{\omega}_{h}^0\right) -\left(\omega_{h}^0\right)\right]\\
			&=\frac{C_{T,h}}{N}\left\{ \sum_{k=1}^{m-1}\sum_{\mbox{\tiny{$\begin{array}{c}
							j=0\\
							^{\#}{\left(A_{t}^{m+k}\cap{B_{\alpha}^{h+1}}\right)}=j \end{array}$}}}^{(m+k)\wedge{(h+1)}} \mathbf{E}\left[ I_{\epsilon}(\|Y_{t}^m-Y_{t+k}^m\|) \prod_{\rho=1}^h I_{\epsilon}(|u_{\alpha_{\rho}}-u_{\alpha_{\rho+1}}|)\right. \right. \\
			& \qquad \qquad \qquad \qquad \qquad \qquad \qquad \qquad \qquad \qquad \qquad  -\left(\omega_{h}^0\right)W_m(k,0)\Bigg]\Bigg\},
		\end{aligned}
	\end{equation*}
	among them, when $^{\#}{\left(A_{t}^{m+k}\cap{B_{\alpha}^{h+1}}\right)}=0$, $\left\{Y_{t}^m,Y_{t+k}^m\right\}$ is independent of $\left\{u_{\alpha_{1}},\cdots,u_{\alpha_{h+1}}\right\}$, so
	\begin{equation}\label{EbreCIomegah1}
		\begin{aligned}
			&\quad 
			\sum_{\mbox{\tiny{$\begin{array}{c}
							^{\#}{\left(A_{t}^{m+k}\cap{B_{\alpha}^{h+1}}\right)}=0\end{array}$}}} \left[\mathbf{E}I_{\epsilon}(\|Y_{t}^m-Y_{t+k}^m\|)\prod_{\rho=1}^{h} I_{\epsilon}(|u_{\alpha_{\rho}}-u_{\alpha_{\rho+1}}|) -W_m(k,0)\left(\omega_{h}^0\right)\right] \\
			&=\sum_{\mbox{\tiny{$\begin{array}{c}
							^{\#}{\left(A_{t}^{m+k}\cap{B_{\alpha}^{h+1}}\right)}=0\end{array}$}}} \left[\mathbf{E}I_{\epsilon}(\|Y_{t}^m-Y_{t+k}^m\|) \mathbf{E}\prod_{\rho=1}^{h} I_{\epsilon}(|u_{\alpha_{\rho}}-u_{\alpha_{\rho+1}}|)
			-\left(\omega_{h}^0\right)W_m(k,0)\right]\\
			&=0.
		\end{aligned}
	\end{equation}
	On the other hand, when $^{\#}{\left(A_{t}^{m+k}\cap{B_{\alpha}^{h+1}}\right)}=j, j=1,\cdots,(m+k)\wedge{(h+1)}$, there's $h+2-j,j=1,\cdots,(m+k)\wedge{(h+1)}$ points that are free, hence we get
	\begin{equation}\label{EbreCIomegah2}
		\begin{aligned}
			&\quad  \sum_{\mbox{\tiny{$\begin{array}{c}
							j=1\\
							^{\#}{\left(A_{t}^{m+k}\cap{B_{\alpha}^{h+1}}\right)}=j\end{array}$}}}^{(m+k)\wedge{(h+1)}} \left[\mathbf{E}I_{\epsilon}(\|Y_{t}^m-Y_{t+k}^m\|) \prod_{\rho=1}^{h} I_{\epsilon}(|u_{\alpha_{\rho}}-u_{\alpha_{\rho+1}}|) -\left(\omega_{h}^0\right)W_m(k,0)\right] \\
			&=  \sum_{j=1}^{(m+k)\wedge{(h+1)}}O\left(T^{h+2-j}\right)\\
			&=O\left(T^{h+1}\right)
		\end{aligned}
	\end{equation}
	Based on \eqref{EbreCIomegah1} and \eqref{EbreCIomegah2}, we have
	\begin{equation}\label{EbreCIomegah}
		\begin{aligned}
			\quad \mathbf{E}\left[\breve{C}_{m,T}(\epsilon) -\mathbf{E}\breve{C}_{m,T}(\epsilon)\right] \left[\left(\widehat{\omega}_{h}^0\right) -\left(\omega_{h}^0\right)\right]=O\left(T^{-2}\right).
		\end{aligned}
	\end{equation}
	Now, let's concern about the second part  $\mathbf{E}\left[\widetilde{C}_{m,T}-\left(\omega_{1}^0\right)^{m}\right] \left[\left(\hat{\omega}_{h}^0\right)-\left(\omega_{h}^0\right)\right]$, as detailed below
	\begin{equation}\label{EtilCIomegah0}
		\begin{aligned}
			&\quad\mathbf{E}\left[\widetilde{C}_{m,T}-W_m(m,0)\right] \left[\left(\hat{\omega}_{h}^0\right)-\left(\omega_{h}^0\right)\right]\\
			&=\mathbf{E}\left[\frac{1}{N_{0}}\sum_{t=1}^{T_{m}-m}\sum_{s=t+m}^{T_{m}} \left(I_{\epsilon}(\|Y_{t}^m-Y_{s}^m\|)-W_m(m,0)\right)\right] \\
			&\quad \times{}\left[C_{T,h} \sum\limits_{\alpha_{1},\cdots,\alpha_{h} \atop\mbox{\tiny distrinct}} \left(\prod_{\rho=1}^{h+1} I_{\epsilon}(|u_{\alpha_{\rho}}-u_{\alpha_{\rho+1}}|) -\left(\omega_{h}^0\right)\right)\right]
		\end{aligned}
	\end{equation}
	Similarly, we respectively make the set of all random variables involved in
	$\left\{Y_{t}^m,Y_{s}^m|s-t\geq{m}\right\}$, $\{Y_t^m\}$ and $\{Y_s^m\}$ correspond to three sets below:
	\begin{equation}
		\begin{aligned}
			&A_{t,s}^{2m}=\left\{t,t+1,\cdots,t+m-1,s,s+1,\cdots,s+m-1 |s-t\geq{m}\right\},\\
			&A_{t}^{m}=\left\{t,t+1,\cdots,t+m-1\right\}\quad \mbox{and} \quad A_{s}^{m}=\left\{s,s+1,\cdots,s+m-1\right\}.
		\end{aligned}
	\end{equation}
	Then \eqref{EtilCIomegah0} can be simplified as
	
	\begin{equation*}
		\begin{aligned}
			&\quad \mathbf{E}\left[\widetilde{C}_{m,T}(\epsilon) -W_m(m,0)\right] \left[\left(\widehat{\omega}_{h}^0\right) -\left(\omega_{h}^0\right)\right]\\
			&=\frac{C_{T,h}}{N_0}\sum_{\mbox{\tiny{$\begin{array}{c}
							j=0\\
							^{\#}{\left((A_{t,s}^{2m}\cap{B_{\alpha}^{h+1}}\right)}=j\end{array}$}}}^{(m)\wedge{(h+1)}} \left[\mathbf{E}I_{\epsilon}(\|Y_{t}^m-Y_{s}^m\|) \prod_{\rho=1}^{h} I_{\epsilon}(|u_{\alpha_{\rho}}-u_{\alpha_{\rho+1}}|) -W_m(m,0) \left(\omega_{h}^0\right)\right] .
		\end{aligned}
		\end {equation*}
		Among them, due to independence, it's easy to have 
		\begin{equation}\label{EtilCIomegah2_0}
		\begin{aligned}
			& \quad \sum_{\mbox{\tiny{$\begin{array}{c}
							^{\#}{\left(A_{t,s}^{2m}\cap{B_{\alpha}^{h+1}}\right)}=0\end{array}$}}} \left[\mathbf{E}I_{\epsilon}(\|Y_{t}^m-Y_{s}^m\|) \prod_{\rho=1}^{h} I_{\epsilon}(|u_{\alpha_{\rho}}-u_{\alpha_{\rho+1}}|)-W_m(m,0) \left(\omega_{h}^0\right)\right]\\
			&=0.
		\end{aligned}
		\end{equation} 
		And for cases when $^{\#}{\left(A_{t,s}^{2m}\cap{B_{\alpha}^{h+1}}\right)}=j, j=2,\cdots,(m)\wedge{(h+1)}$, there are $h+3-j$ points are free, thus, we have
		\begin{equation}\label{EtilCIomegah2_2}
		\begin{aligned}
			&\quad+\sum_{\mbox{\tiny{$\begin{array}{c}
							j=2\\
							^{\#}{\left(A_{t,s}^{2m}\cap{B_{\alpha}^{h+1}}\right)}=j\end{array}$}}}^{(m)\wedge{(h+1)}} \left[\mathbf{E}I_{\epsilon}(\|Y_{t}^m-Y_{s}^m\|) \prod_{\rho=1}^{h} I_{\epsilon}(|u_{\alpha_{\rho}}-u_{\alpha_{\rho+1}}|) -W_m(m,0) \left(\omega_{h}^0\right)\right]\\
			&=O(T^{h+1}).
		\end{aligned}
		\end{equation}
		After some routine calculation, we obtain
		\begin{equation}
			\begin{aligned}
				&\quad\sum_{\mbox{\tiny{$\begin{array}{c}
								^{\#}{\left(A_{t,s}^{2m}\cap{B_{\alpha}^{h+1}}\right)}=1\end{array}$}}} \left[\mathbf{E}I_{\epsilon}(\|Y_{t}^m-Y_{s}^m\|) \prod_{\rho=1}^{h} I_{\epsilon}(|u_{\alpha_{\rho}}-u_{\alpha_{\rho+1}}|)  -W_m(m,0) \left(\omega_{h}^0\right)\right] \\
				&=2M_{h+2} \left[W_m^{(1)}(m,0)\left( 2\left(\omega_{h}^1\right) +\sum_{i=1}^{h-1}\left(\xi_{h}^i\right)\right) -m(h+1)W_m(m,0) \left(\omega_{h}^0\right)\right]
			\end{aligned}
		\end{equation}
		where $M_{h+2}=\frac{(T-2m+2)!}{(T-2m+2-(h+2))!}$. Then, we get 
		\begin{equation} \label{EtilCIomegah2}
		\begin{aligned}
			&\mathbf{E}\left[\widetilde{C}_{m,T}(\epsilon)-W_m(m,0)\right] \left[\left(\widehat{\omega}_{h}^0\right)-\left(\omega_{h}^0\right)\right]\\
			&=\frac{2C_{T,h}M_{h+2}}{N_0} \left[W_m^{(1)}(m,0)\left( 2\left(\omega_{h}^1\right) +\sum_{\kappa =1}^{h-1}\left(\xi_{h}^{\kappa}\right)\right) -m(h+1)W_m(m,0) \left(\omega_{h}^0\right)\right]\\
			&\quad +O(T^{-2}).
		\end{aligned}
		\end{equation} 
		Further, 
		based on \eqref{EbreCIomegah}, \eqref{EtilCIomegah2} and \eqref{DivitionV5}, we obtain
		\begin{equation}\label{V5}
			\begin{aligned}
				V_{5}&=\frac{2}{N}\sum_{k=1}^{m-1}(T_m-k)W_m^{(h)}(k,0)\mathbf{E}\left[C_{m,T}(\epsilon)-\mu_{m}\right] \left[\left(\widehat{\omega}_{h}^0\right)-\left(\omega_{h}^0\right)\right]\\
				&=\sigma_{m,h}^2+O\left(T^{-3}\right),
			\end{aligned}
		\end{equation}
		where
		\begin{equation*}
			\begin{aligned}
				\sigma_{m,h}^2&=\frac{4}{N^2}\sum_{k=1}^{m-1}(T_m-k)W_m^{(h)}(k,0)C_{T,h}M_{h+2}\\
				&\quad  \times{}  \left[W_m^{(1)}(m,0)\left(2\left(\omega_{h}^1\right)+\sum_{\kappa=1}^{h-1}\left(\xi_{h}^{\kappa}\right)\right)-m(h+1)W_m(m,0) \left(\omega_{h}^0\right) \right],
			\end{aligned}
		\end{equation*}
		$T_{m}=T-m+1$, $h=\left\lfloor{\frac{m}{k}}\right\rfloor$, $N=\binom{T_{m}}{2}$, $C_{T,h}=\frac{(T-1-h)!}{T!}$, $M_{h+2}=\frac{(T-2m+2)!}{(T-2m+2-(h+2))!}$.
		
		By the same argument, we obtain
		\begin{equation}\label{V6}
			\begin{aligned}
				V_{6}&=\frac{2}{N}\sum_{k=1}^{m-1}(T_m-k)W_m^{(h+1)}(k,0) \mathbf{E}\left[C_{m,T}(\epsilon)-\mu_{m}\right] \left[\left(\widehat{\omega}_{h+1}^0\right)-\left(\omega_{h+1}^0\right)\right]\\
				&=\sigma_{m,h+1}^{2}+O\left(T^{-3}\right),
			\end{aligned}
		\end{equation}
		where
		\begin{equation}\label{sigmamh1}
			\begin{aligned}
				\sigma_{m,h+1}^2&=\frac{4}{N^2}\sum_{k=1}^{m-1}(T_m-k)W_m^{(h+1)}(k,0)C_{T,h+1}M_{h+3}\\
				&\ \times{}  \left[W_m^{(1)}(m,0)\left(2\left(\omega_{h+1}^1\right)+  \sum_{\kappa =1}^{h}\left(\xi_{h+1}^{\kappa}\right)\right)  -m(h+2)W_m(m,0) \left(\omega_{h+1}^0\right)\right],
			\end{aligned}
		\end{equation}
		$T_{m}=T-m+1$, $h=\left\lfloor{\frac{m}{k}}\right\rfloor$, $N=\binom{T_{m}}{2}$,  $C_{T,h+1}=\frac{(T-2-h)!}{T!}$, $M_{h+3}=\frac{(T-2m+2)!}{(T-2m+2-(h+3))!}$.
		
		Following \eqref{DivitionV5}, to get the result of $$V_{7}=\frac{2N_{0}}{N}W_m^{(1)}(m,0) \mathbf{E}\left[C_{m,T}(\epsilon)-\mu_{m}\right] \left[\left( \widehat{\omega}_{1}^0\right) -\left( \omega_{1}^0\right)\right],$$  we  only need to care about
		\begin{equation}\label{EbreCIomega10}
			\mathbf{E}\left[\breve{C}_{m,T}(\epsilon) -\mathbf{E}\breve{C}_{m,T}(\epsilon)\right] \left[\left(\widehat{\omega}_{1}^0\right) -\left(\omega_{1}^0\right)\right],
		\end{equation}
		and
		\begin{equation}\label{EtilCIomega10}
			\mathbf{E}\left[\widetilde{C}_{m,T}(\epsilon) -\mathbf{E}\widetilde{C}_{m,T}(\epsilon)\right] \left[\left(\widehat{\omega}_{1}^0 \right)-\left(\omega_{1}^0\right)\right].
		\end{equation}
		Among them,
		\begin{equation}\label{EbreCIomega100}
			\begin{aligned}
				&\quad\mathbf{E}\left[\breve{C}_{m,T}(\epsilon) -\mathbf{E}\breve{C}_{m,T}(\epsilon)\right] \left[\left(\widehat{\omega}_{1}^0\right) -\left(\omega_{1}^0\right)\right]\\
				&=\mathbf{E}\left[\frac{1}{N}\sum_{k=1}^{m-1}\sum_{t=1}^{T_{m}-k} \left(I_{\epsilon}(\|Y_{t}^m-Y_{t+k}^m\|) -W_m(k,0)\right) \right] \\
				&\quad \times{}\left[C_{T,1} \sum\limits_{\alpha_{1}\neq{\alpha_{2}}} \left[I_{\epsilon}(|u_{\alpha_{1}}-u_{\alpha_{2}}|) -\left(\omega_{1}^0\right)\right] \right. \\
				&=\frac{C_{T,1}}{N}\left\{\sum_{k=1}^{m-1}\sum_{t=1}^{T_{m}-k} \sum\limits_{\alpha_{1}\neq{\alpha_{2}}}\left[\mathbf{E} I_{\epsilon}(\|Y_{t}^m-Y_{t+k}^m\|) I_{\epsilon}(|u_{\alpha_{1}}-u_{\alpha_{2}}|) -\left(\omega_{1}^0\right)W_m(k,0)\right]\right\}\\
				&=\frac{C_{T,1}}{N}\left\{\sum_{k=1}^{m-1} \sum_{\mbox{\tiny{$\begin{array}{c}
								j=0\\
								^{\#}{\left(A_{t}^{m+k}\cap{B_{\alpha}^{2}}\right)}=j\end{array}$}}}^{2} \left[\mathbf{E}I_{\epsilon}(\|Y_{t}^m-Y_{t+k}^m\|) I_{\epsilon}(|u_{\alpha_{1}}-u_{\alpha_{2}}|)   -\left(\omega_{1}^0\right)W_m(k,0)\right] \right\},
			\end{aligned}
		\end{equation}
		where $A_{t}^{m+k}=\{t,t+1,\cdots,t+k+m-1\}$, $B_{\alpha}^2=\{\alpha_{1},\alpha_{2}\}$, $^{\#}{\left(A_{t}^{m+k}\right)}=m+k$, $^{\#}{\left(B_{\alpha}^{2}\right)}=2$.
		With routine calculation, it's not difficult to get:
		\begin{equation}\label{EbreCIomega11}
			\begin{aligned}
				&\sum_{\mbox{\tiny{$\begin{array}{c}
								^{\#}{\left(A_{t}^{m+k}\cap{B_{\alpha}^{2}}\right)}=1\end{array}$}}} \left[\mathbf{E}I_{\epsilon}(\|Y_{t}^m-Y_{t+k}^m\|) I_{\epsilon}(|u_{\alpha_{1}}-u_{\alpha_{2}}|) -W_m(k,0)\left(\omega_{1}^0\right)\right]\\
				&=2\mathcal{N}_{T_m,k}\left\{ W_m^{(h)}(k,0)\left[2\left(\omega_h^1\right)+\sum\limits_{\kappa=1}^{h_1}\left(\xi_h^{\kappa}\right)+i_1\left(\xi_h^{h-1}\right) \right] \right.\\
				&\left. +W_m^{(h+1)}(k,0) \left[2\left(\omega_{h+1}^1\right)+\sum\limits_{\kappa=1}^{h_1}\left(\xi_{h+1}^{\kappa}\right)\right] -(m-k)W_m(k,0)\left(\omega_1^0\right)\right\},
			\end{aligned}
		\end{equation}
		where $\mathcal{N}_{T_m,k}=\binom{T_m-k}{2} $, $h_1=\lfloor{\frac{m-k}{k}}\rfloor$, $i_1=m-k-h_1k$
		and
		\begin{equation}\label{EbreCIomega12}
			\begin{aligned}
				&\sum_{\mbox{\tiny{$\begin{array}{c}
								^{\#}{\left(A_{t}^{m+k}\cap{B_{\alpha}^{2}}\right)}=2\end{array}$}}} \left[\mathbf{E}I_{\epsilon}(\|Y_{t}^m-Y_{t+k}^m\|) I_{\epsilon}(|u_{\alpha_{1}}-u_{\alpha_{2}}|) -\omega_{1}^0W_m(k,0)\right] \\
				&=O(T).
			\end{aligned}
		\end{equation}
		
		In addition,
		\begin{equation}\label{EtildeCIomega0}
			\begin{aligned}
				&\quad\mathbf{E}\left[\widetilde{C}_{m,T}(\epsilon) -\mathbf{E}\widetilde{C}_{m,T}(\epsilon)\right] \left[\left(\widehat{\omega}_{1}^0\right)-\left(\omega_{1}^0\right)\right]\\
				&=\mathbf{E}\left[\frac{1}{N_{0}}\sum_{t=1}^{T_{m}-m} \sum_{s=t+m}^{T_{m}} \left(I_{\epsilon}(\|Y_{t}^m-Y_{s}^m\|) -\left(\omega_{1}^0\right)^{m}\right)\right] \\
				&\quad \times{}\left[C_{T,1}\sum\limits_{\alpha_{1}\neq{\alpha_{2}}} \left(I_{\epsilon}(|u_{\alpha_{1}}-u_{\alpha_{2}}|) -\left(\omega_{1}^0\right)\right)\right]\\
				&=\frac{C_{T,1}}{N_0}\sum_{t=1}^{T_{m}-m}\sum_{s=t+m}^{T_{m}} \sum\limits_{\alpha_{1}\neq{\alpha_{2}}}\left[\mathbf{E} I_{\epsilon}(\|Y_{t}^m-Y_{s}^m\|) I_{\epsilon}(|u_{\alpha_{1}}-u_{\alpha_{2}}|)  -\left(\omega_{1}^0\right)^{m+1}\right]\\
				&=\frac{C_{T,1}}{N_0}\sum_{\mbox{\tiny{$\begin{array}{c}
								j=0\\
								^{\#}{\left(A_{t,s}^{2m}\cap{B_{\alpha}^{2}}\right)}=j \end{array}$}}}^2\left[ \mathbf{E}I_{\epsilon}(\|Y_{t}^m-Y_{s}^m\|) I_{\epsilon}(|u_{\alpha_{1}}-u_{\alpha_{2}}|)  -\left(\omega_{1}^0\right)^{m+1}\right],
			\end{aligned}
		\end{equation}
		where $A_{t,s}^{2m}=\left\{t,t+1,\cdots,t+m-1,s,s+1,\cdots,s+m-1 |s-t\geq{m}\right\}$, $B_{\alpha}^{2}=\left\{\alpha_{1},\alpha_{2}\right\}$, $^{\#}{\left(A_{t,s}^{2m}\right)}=2m$, $^{\#}{\left(B_{\alpha}^{2}\right)}=2$,
		
		\begin{equation}\label{EtildeCIomega11}
			\begin{aligned}
				&\quad\sum_{\mbox{\tiny{$\begin{array}{c}
								^{\#}{\left(A_{t,s}^{2m}\cap{B_{\alpha}^{2}}\right)}=1\end{array}$}}} \left[ \mathbf{E}I_{\epsilon}(\|Y_{t}^m-Y_{s}^m\|) I_{\epsilon}(|u_{\alpha_{1}}-u_{\alpha_{2}}|)  -\left(\omega_{1}^0\right)^{m+1}\right]\\
				&=4m(T-2m+2)(T-2m+1)(T-2m)\left[\left(\omega_1^0\right)^{m-1}\left(\omega_1^1\right)-\left(\omega_1^0\right)^{m+1}\right]\\
				&=4M_3\left[W_m^{(1)}(m,0)\left(\omega_1^1\right)-mW_m(m,0)\left(\omega_1^0\right)\right]
			\end{aligned}
		\end{equation}
		\begin{equation}\label{EtildeCIomega12}
			\begin{aligned}
				&\quad\sum_{\mbox{\tiny{$\begin{array}{c}
								^{\#}{\left(A_{t,s}^{2m}\cap{B_{\alpha}^{2}}\right)}=2\end{array}$}}} \left[ \mathbf{E}I_{\epsilon}(\|Y_{t}^m-Y_{s}^m\|) I_{\epsilon}(|u_{\alpha_{1}}-u_{\alpha_{2}}|)  -\left(\omega_{1}^0\right)^{m+1}\right]\\
				&=M_2\left\{2m\left[\left(\omega_1^0\right)^m-\left(\omega_1^0\right)^{m+1}\right]+ 2m(m-1)\left[\left(\omega_1^0\right)^{m-1}\left(\omega_1^1\right)-\left(\omega_1^0\right)^{m+1}\right] \right. \\
				&\left. \qquad \quad  +2m(m-1)\left[\left(\eta_1^2\right)\left(\omega_1^0\right)^{m-2}-\left(\omega_1^0\right)^{m+1}\right] \right\}\\
				&=M_2\left\{2W_m^{(1)}(m,0)\left[\left(\omega_1^0\right)+(m-1)\left(\left(\omega_1^1\right)+\left(\eta_1^2\right)\left(\omega_1^0\right)^{-1}\right)\right] 
				-(4m-2)W_m(m,0)\left(\omega_1^0\right)
				\right\}.
			\end{aligned}
		\end{equation}
		Thus, we obtain 
		\begin{equation}\label{ECmomega1}
		\begin{aligned}
			&\mathbf{E}\left[C_{m,T}(\epsilon) -\mathbf{E}C_{m,T}(\epsilon)\right] \left[\left(\widehat{\omega}_{1}^0\right)-\left(\omega_{1}^0\right)\right]\\
			&=\frac{C_{T,1}}{N}\sum\limits_{k=1}^{m-1}2\mathcal{N}_{T_m,k}\left\{ W_m^{(h)}(k,0)\left[2\left(\omega_h^1\right)+\sum\limits_{\kappa=1}^{h_1}\left(\xi_h^{\kappa}\right)+\left(\xi_h^{h-1}\right) \right] \right.\\
			&\qquad  \qquad \qquad \qquad\qquad +W_m^{(h+1)}(k,0) \left[2\left(\omega_{h+1}^1\right)+\sum\limits_{\kappa=1}^{h_1}\left(\xi_{h+1}^{\kappa}\right)\right]\\ 
			& \left. \qquad \qquad \qquad \qquad \qquad -(m-k)W_m(k,0)\left(\omega_1^0\right)\right\}\\
			&+\frac{C_{T,1}}{N_0}\left\{2W_m^{(1)}(m,0)\left[\left(2M_3+(m-1)M_2\right)\left(\omega_1^1\right)\right.\right.\\
			&\left. \qquad \qquad +M_2\left(\left(\omega_1^0\right)+(m-1)\left(\eta_1^2\right)\left(\omega_1^0\right)^{-1}\right)\right]\\
			&\left. \qquad \qquad  -W_m(m,0)\left[\omega_1^0\right)\left[4mM_3+(4m-2)M_2\right]
			\right\}.
		\end{aligned}
		\end{equation}
		So far, we have
		\begin{equation}\label{V7}
			\begin{aligned}
				V_{7}=\sigma_{m,1}^2+O\left(T^{-3}\right),
			\end{aligned}
		\end{equation}
		where
		\begin{equation}\label{sigmam1}
			\sigma_{m,1}^2=\frac{2N_{0}}{N}W_m^{(1)}(m,0)
			\eqref{ECmomega1}.
		\end{equation}
		
		The calculation of $V_{9}$ and $V_{10}$ are similar. Next, we will only introduce that of $V_{9}$ in detail.
		\begin{equation}\label{V90}
			V_{9}=\frac{2N_{0}}{N^2}W_m^{(1)}(m,0) \sum_{k=1}^{m-1}(T_m-k)W_m^{(h)}(k,0) \mathbf{E}\left[\widehat{\omega}_{1}^0-\omega_{1}^0\right] \left[\left(\widehat{\omega}_{h}^0\right) -\left(\omega_{h}^0\right)\right],
		\end{equation}
		where
		\begin{equation}\label{Eomega1h0}
			\begin{aligned}
				&\quad \mathbf{E}\left[\widehat{\omega}_{1}^0-\omega_{1}^0\right] \left[\widehat{\omega}_{h}^0-\omega_{h}^0\right]\\
				&=\mathbf{E}\left(C_{T,1} \sum\limits_{\alpha_{1}\neq{\alpha_{2}}} \left[I_{\epsilon}(|u_{\alpha_{1}}-u_{\alpha_{2}}|) -\left(\omega_{1}^0\right)\right]\right)\\
				&\quad \times{} \left(C_{T,h}\sum \limits_{\beta_{1},\cdots,\beta_{h+1} \atop\mbox{\tiny distrinct}}\left[\prod_{\rho=1}^{h} I_{\epsilon}(|u_{\beta_{\rho}}-u_{\beta_{\rho+1}}|) -\left(\omega_{h}^0\right)\right]\right)\\
				&=C_{T,1}C_{T,h} \sum\limits_{\alpha_{1}\neq{\alpha_{2}}}
				\sum\limits_{\beta_{1},\cdots,\beta_{h+1} \atop\mbox{\tiny distrinct}} \left[\mathbf{E}I_{\epsilon}(|u_{\alpha_{1}}-u_{\alpha_{2}}|) \prod_{\rho=1}^{h} I_{\epsilon}(|u_{\beta_{\rho}}-u_{\beta_{\rho+1}}|) -\left(\omega_{1}^0\right)\left(\omega_{h}^0\right)\right]\\
				&=C_{T,1}C_{T,h} \sum_{\mbox{\tiny{$\begin{array}{c}
								j=0\\
								^{\#}{\left(A_{\alpha}^{2}\cap{B_{\beta}^{h+1}}\right)}=j\end{array}$}}}^{2} \left[\mathbf{E}I_{\epsilon}(|u_{\alpha_{1}}-u_{\alpha_{2}}|) \prod_{\rho=1}^{h} I_{\epsilon}(|u_{\beta_{\rho}}-u_{\beta_{\rho+1}}|) -\left(\omega_{1}^0\right)\left(\omega_{h}^0\right)\right],
			\end{aligned}
		\end{equation}
		where $
		A_{\alpha}^2=\{\alpha_{1},\alpha_{2}\}$, $^{\#}{\left(A_{\alpha}^{2}\right)}=2$ and $B_{\beta}^{h+1}=\{\beta_{1},\cdots,\beta_{h+1}\}$,
		$^{\#}{\left(B_{\beta}^{h+1}\right)}=h+1$. 
		Then, when $^{\#}{\left(A_{\alpha}^{2}\cap{B_{\beta}^{h+1}}\right)}=0$,$I_{\epsilon}(|u_{\alpha_{1}}-u_{\alpha_{2}}|)$ is independent of $\prod_{\rho=1}^{h}I_{\epsilon}(|u_{\rho}-u_{\rho+1}|)$, so that
		\begin{equation}\label{Eomega1h1}
			\begin{aligned}
				&\sum_{\mbox{\tiny{$\begin{array}{c}
								^{\#}{\left(A_{\alpha}\cap{B_{\beta}^{h+1}}\right)}=0\end{array}$}}} \left[\mathbf{E}I_{\epsilon}(|u_{\alpha_{1}}-u_{\alpha_{2}}|) \prod_{\rho=1}^{h} I_{\epsilon}(|u_{\beta_{\rho}}-u_{\beta_{\rho+1}}|) -\left(\omega_{1}^0\right)\left(\omega_{h}^0\right)\right]\\
				&=0.
			\end{aligned}
		\end{equation}
		When $^{\#}{\left(A_{\alpha}^{2}\cap{B_{\beta}^{h+1}}\right)}=2$, there's only $h+1$ points that are free, further, we have
		\begin{equation}\label{Eomega1h2}
			\begin{aligned}
				&\sum_{
					^{\#}{\left(A_{\alpha}^{2}\cap{B_{\beta}^{h+1}}\right)}=2} \left[\mathbf{E}I_{\epsilon}(|u_{\alpha_{1}}-u_{\alpha_{2}}|) \prod_{\rho=1}^{h} I_{\epsilon}(|u_{\beta_{\rho}}-u_{\beta_{\rho+1}}|) -\left(\omega_{1}^0\right)\left(\omega_{h}^0\right)\right]\\
				&=O(T^{h+1}).
			\end{aligned}
		\end{equation}
		With routine calculation, we get
		\begin{equation}\label{Eomega1h3}
			\begin{aligned}
				&\sum_{
					^{\#}{\left(A_{\alpha}^{2}\cap{B_{\beta}^{h+1}}\right)}=1} \left[\mathbf{E}I_{\epsilon}(|u_{\alpha_{1}}-u_{\alpha_{2}}|) \prod_{\rho=1}^{h} I_{\epsilon}(|u_{\beta_{\rho}}-u_{\beta_{\rho+1}}|) -\left(\omega_{1}^0\right)\left(\omega_{h}^0\right)\right]\\
				&=\frac{4}{C_{T,h+1}}\left[\left(\omega_{h}^1\right) -\left(\omega_{1}^0\right)\left(\omega_{h}^0\right)\right] +\frac{2}{C_{T,h+1}} \sum_{\kappa=1}^{h-1}\left[\left(\xi_{h}^{\kappa}\right) -\left(\omega_{1}^0\right)\left(\omega_{h}^0\right)\right]\\
				&=\frac{2}{C_{T,h+1}}\left[2\left(\omega_h^1\right)+\sum\limits_{\kappa=1}^{h-1}\left(\xi_h^{\kappa}\right)-(h+1)\left(\omega_1^0\right)\left(\omega_h^0\right)\right].
			\end{aligned}
		\end{equation}
		Based on \eqref{Eomega1h0}, \eqref{Eomega1h1}, \eqref{Eomega1h2} and \eqref{Eomega1h3}, we get
		\begin{equation}\label{V9}
			\begin{aligned}
				V_{9}=\sigma_{1,h}^2+O\left(T^{-3}\right),
			\end{aligned}
		\end{equation}
		where
		\begin{equation}\label{sigma1h}
			\begin{aligned}
				\sigma_{1,h}^2&=\frac{4N_{0}C_{T,1}}{N^2}W_m^{(1)}(m,0) \sum_{k=1}^{m-1}(T_m-k)W_m^{(h)}(k,0)C_{T,h}/C_{T,h+1}\\
				&\times{} \left[2\left(\omega_h^1\right)+\sum\limits_{\kappa=1}^{h-1}\left(\xi_h^{\kappa}\right)-(h+1)\left(\omega_1^0\right)\left(\omega_h^0\right)\right].
			\end{aligned}
		\end{equation}
		By the same argument of $V_{9}$, we have
		\begin{equation*}
			\begin{aligned}
				V_{10}&=\sigma_{1,h+1}^2+O\left(T^{-3}\right),
			\end{aligned}
		\end{equation*}
		where
		\begin{equation}\label{sigma1h1}
			\begin{aligned}
				\sigma_{1,h+1}^2&=\frac{4N_{0}C_{T,1}}{N^2}W_m^{(1)}(m,0) \sum_{k=1}^{m-1}(T_{m}-k)W_m^{(h+1)}(k,0)C_{T,h+1}/C_{T,h+2}\\
				&  \times{} \left[2\left(\omega_{h+1}^1\right)  +\sum_{\kappa=1}^{h}\left(\xi_{h+1}^{\kappa}\right) -(h+2)\left(\omega_{1}^0\right)\left(\omega_{h+1}^0\right)\right] .
			\end{aligned}
		\end{equation}
		So far, we obtain the asymptotic variance of $\mathcal{K}_{m,T}(\epsilon)$
		\begin{equation}
			\begin{aligned}
				\mathbf{Var}\left(\mathcal{K}_{m,T}(\epsilon)\right) &=\mathbf{Var}\left(\widetilde{\mathcal{K}}_{m,T}(\epsilon)\right)\\
				&=\sigma_{m,m}^2+\sigma_{1,1}^2-\sigma_{m,h}^2-\sigma_{m,h+1}^2 -\sigma_{m,1}^2+\sigma_{1,h}^2+\sigma_{1,h+1}^2.
			\end{aligned}
		\end{equation}
		Theorem \ref{ThCLTKCI} is proved.
		
	\end{proof}

\vspace{1cm} \noindent {\bf {References}} \small

\begin{itemize}
\item[{[1]}] M Akintunde, J Oyekunle, G Olalude.  \emph{Detection of non-linearity in the time series using bds test}, Science Journal of Applied Mathematics and Statistics, 2015, 3(4): 184-187.
\item[{[2]}] E G Baek, W A Brock \emph{A nonparametric test for independence of a multivariate time
	series}, Statistica Sinica, 1992, 2(1): 137-156.
\item[{[3]}] Belaire-Franch J, Contreras D. \emph{How to compute the bds test: a software comparison}, Journal of Applied Econometrics, 2002, 17: 691-699.
\item [{[4]}] Brock W A, Dechert W, Scheinkman J A. \emph{A test  for independence based on the correlation dimension}, University of Wisconsin at Madison, University of Houston and University of Chicago 1987.
\item [{[5]}] Brock W A, Durlauf S N. \emph{Indentification of binary choice models with social interactions}, Journal of Econometrics, 2007, 140(1): 52-75.
\item[{[6]}] Broock W A, Scheinkman J A, Dechert W D, LeBaron B. \emph{A test for independence based on the correlation dimension}, Econometric Reviews, 1996, 15(3): 197-235.
\item[{[7]}] Brooks C, Heravi S M. \emph{The effect of (mis-specified) garch filters on the finite sample distribution of the bds test}, Computational Economics, 1999, 13(2): 147-162.
\item[{[8]}] C{\'a}novas J, Guillam{\'{o}}n A, Vera S. \emph{Testing for independence: Permutation based tests v.s. bds test.} The European Physical Journal Special Topics, 2013, 222(2): 275-284.
\item[{[9]}] Caporale G M, Ntantamis C, Pantelidis T, Pittis N. \emph{The bds test as a test for the adequacy of garch(1,1) specification: a monte carlo study.} Journal of Financial Econometrics, 2005, 3(2): 282-309.
\item[{[10]}] Chen Y T, Kuan C M. \emph{Time irreversibility and egarch effects in us stock index returns.} Journal of Applied Econometrics, 2002, 17(5): 565-578.
\item[{[11]}] Cryer J D, Chan K S. \emph{Time series analysis: with applications in R.} Springer, 2008.
\item[{[12]}] De Lima P. J. F. \emph{Nuisance parameter free properties of correlation integral based statistics. } Econometric Reviews, 1996, 15(3): 237-259.
\item[{[13]}] De Lima P. J. F. \emph{On the robustness of nonlinearity tests to moment condition failure.} Journal of Econometrics, 1997, 76(1-2): 251-280.
\item[{[14]}] Delgado M A. \emph{Testing serial independence using the sample distribution function.} Journal of Time Series Analysis, 1996, 17(3): 271-285.
\item[{[15]}] Disks C, Panchenko V. \emph{Nonparametric tests for serial independence based on quadratic forms}, Statistica Sinica, 2007, 17(1): 81-89, S1-S4.
\item[{[16]}] Fernandes M, Preumont P Y. \emph{The finite-sample size of the bds test for garch standardized residuals}, Brazilian Review of Econometrics, 2012, 32(2): 241-260.
\item[{[17]}] Genest C, Ghoudi K, R\'{e}millard B. \emph{Rank-based extensions of the brock, dechert and scheinkman test}, Journal of the American Statistical Association, 2007, 102(480): 1363-1376.
\item[{[18]}] Granger C W, Maasoumi E, Racine J. \emph{A dependence metric for possibly nonlinear processes}, Journal of Time Series Analysis, 2004, 25(5): 649-669.
\item[{[19]}] Hjellvik V, Tj{\o}stheim D. \emph{Nonparametric statistics for testing of linearity and serial independence}, Journal of Nonparametric Statistics, 1996, (2-3): 223-251.
\item[{[20]}] Hong Y. \emph{Testing for pairwise serial independence via the empirical distribution function}, Journal of the Royal Statistical Society: Series B (Statistical Methodology), 1998, 60(2): 429-453.
\item[{[21]}] Hsieh D A. \emph{Chaos and nonlinear dynamics: application to financial markets.} The Journal of Finance, 1991, 46(5): 1839-1877.
\item[{[22]}] Hui Y, Wong W K, Bai Z, Zhu Z. \emph{A new nonlinearity test to circumvent the limitation of volterra expansion with application}, Journal of the Korean Statistical Society, 2017, 46(3): 365-374.
\item[{[23]}] Jansen D W, De Vries C G. \emph{On the frequency of large stock returns: Putting booms and busts into perspective}, The Review of Economics and Statistics, 1991, 73(1): 18-24.
\item[{[24]}] Kanzler L. \emph{Very fast and correctly sized estimation of the bds statistic}, Available at SSRN: https://ssrn.com/abstract=151669 or http://dx.doi.org/10.2139/ssrn.151669.
\item[{[25]}] Ko{\v{c}}enda E. \emph{An alternative  to the bds test: integration across the correlation integral}, Econometric Reviews, 2001, 20(3)
: 337-351.
\item[{[26]}] Ko{\v c}enda E, Briatka L. \emph{Optimal range for the iid test based on integration across the correlation integral}, Econometric Reviews, 2005, 24(3): 265-296.
\item[{[27]}] Lai D. \emph{Asymptotic distribution of the estimated bds statistic from the residuals of location-scale type processes}, Statistics: A Journal of Theoretical and Applied Statistics, 2000, 34(2): 117-135.
\item[{[28]}] LeBaron B. \emph{A fast algorithm for the bds statistic}, Studies in Nonlinear Dynamics \& Econometrics 1997;2(2).
\item[{[29]}] Lee T H, White H, Granger C W. \emph{Testing for neglected nonlinearity in time series models: A comparison of neural network methods and alternative tests}, Journal of Econometrics, 1993, 56(3): 269-290.
\item[{[30]}] Loretan M, Phillips P C. \emph{Testing the covariance stationarity of heavy-tailed time series: An overview of the theory with applications to several financial datasets}, Journal of Empirical Finance, 1994, 1(2): 211-248.
\item[{[31]}] Luo W, Bai Z, Zheng S, Hui Y. \emph{A modified bds test},  Statistics \& Probability Letters, 2020; 164: 108794.
\item[{[32]}] Madhavan V. \emph{Nonlinearity in investment grade credit default swap (cds) indices of us and europe: evidence from bds and close-returns tests}, Global Finance Journal, 2013; 24(3): 266-279.
\item[{[33]}] Matilla-Garc{\'\i}a M, Mar{\'\i}n M R. \emph{A non-parametric independence test using permutation entropy}, Journal of Econometrics, 2008; 144(1): 139-155.
\item[{[34]}] Matilla-Garc{\'\i}a M, Mar{\'\i}n M R. \emph{A new test for chaos and determinism based on symbolic dynamics}, Journal of Economic Behavior \& Organization, 2010; 76(3): 600?614.
\item[{[35]}] Matilla-Garc{\'\i}a M, Queralt R, Sanz P, V{\'a}zquez F. \emph{A generalized bds statistic}, Computational Economics, 2004a, 24(3): 277-300.
\item[{[36]}] Matilla-Garc{\'\i}a M, Sanz P, V{\'a}zquez F. \emph{Dimension estimation with the bds-g statistic}, Applied Economics, 2004b, 36(11): 1219-1223.
\item[{[37]}] Matilla-Garc{\'\i}a M, Sanz P, V{\'a}zquez F. \emph{The bds test and delay time}, Applied Economics Letters, 2005, 12(2): 109-113.
\item[{[38]}] Pinkse J. \emph{A consistent nonparametric test for serial independence}, Journal of Econometrics, 1998, 84(2): 205-231.
\item[{[39]}] Grassberger P, Procacia I. \emph{Measuring the strangeness of strange attractors}, Physica D: Nonlinear Phenomena, 1983, 9(1-2): 189-208.
\item[{[40]}] Racine J S, Maasoumi E. \emph{ A versatile and robust metric entropy test of time-reversibility and other hypotheses}, Journal of Econometrics, 2007, 138(2): 547-567.
\item[{[41]}] Skaug H J, Tj{\o}stheim D. \emph{A nonparametric test of serial independence based on the empirical distribution function}, Biometrika, 1993, 80(3): 591-602. 

\item[{[42]}]Tsay R S. \emph{Analysis of financial time series[M].} John wiley \& sons, 2005.
\end{itemize}\vskip 10mm

\noindent School of Data Science, Zhejiang University of Finance and Economics, Hangzhou 310018,
China.\\
\indent Email: luowy042@zufe.edu.cn

\end{document}